\newtheorem{thm}{Theorem}[section]
\newtheorem{lem}[thm]{Lemma}
\newtheorem{pro}[thm]{Proposition}
\theoremstyle{definition}
\newtheorem{rem}[thm]{Remark}
\numberwithin{equation}{section}
\newcommand{\X}{\mathbb{X}}
\newcommand{\Y}{\mathbb{Y}}
\newcommand{\ex}{\mathbb{E}}
\newcommand{\re}{\textup{Re}}
\newcommand{\pr}{\mathbb{P}}
\newcommand{\ep}{\varepsilon}
\newcommand{\F}{\mathcal{F}^*(x)}
\newcommand{\Fo}{\mathcal{F}(x)}
\newcommand{\newabstract}[1]{%
  \par\bigskip
  \csname otherlanguage*\endcsname{#1}%
  \csname captions#1\endcsname
  \item[\hskip\labelsep\scshape\abstractname.]
}
\begin{document}

\baselineskip=17pt

\title{The distribution of large quadratic character sums and applications}

\author{Youness Lamzouri}

\address{
Universit\'e de Lorraine, CNRS, IECL, F-54000 Nancy, FRANCE, 
and IRL3457 CRM-CNRS, Centre de Recherches Math\'ematiques, Universit\'e de Montr\'eal,
Pavillon Andr\'e-Aisenstadt,
2920 Chemin de la tour, 
Montr\'eal, QC, H3T 1J4, CANADA}

\email{youness.lamzouri@univ-lorraine.fr}

\date{}

\dedicatory{Dedicated to Andrew Granville on the occasion of his 60th birthday}

\begin{abstract} In this paper, we investigate the distribution of the maximum of character sums over the family of primitive quadratic characters attached to fundamental discriminants $|d|\leq x$. 
In particular, our work improves results of Montgomery and Vaughan, and gives strong evidence that the Omega result of Bateman and Chowla for quadratic character sums is optimal. We also obtain similar results for real characters with prime discriminants up to $x$, and deduce the interesting consequence that almost all primes with large Legendre symbol sums are congruent to $3$ modulo $4$. Our results are motivated by a recent work of Bober, Goldmakher, Granville and Koukoulopoulos, who proved similar results for the family of non-principal characters modulo a large prime. However, their method does not seem to generalize to other families of Dirichlet characters. Instead, we use a different and  more streamlined approach, which relies mainly on the quadratic large sieve. As an application, we consider a question of Montgomery concerning the positivity of sums of  Legendre symbols. 

\end{abstract}

\subjclass[2010]{Primary: 11N64, 11L40; Secondary 11K65.}

\thanks{}

\maketitle


\section{Introduction}


Character sums play a central role in modern number theory through their numerous applications in the study of various arithmetic, analytic, algebraic and geometric objects. One important and basic example is that of quadratic Dirichlet characters, which include the Legendre symbol. The study of such characters and related sums
has a long and rich history stretching back to 
the work of Gauss on binary quadratic forms. Let $\chi$ be a Dirichlet character modulo $q$. One quantity that was extensively studied over the past century is 
$$ M(\chi):= \max_{t\leq q}  \Big|\sum_{n\leq t} \chi(n)\Big|.$$
In 1918, P\'{o}lya and Vinogradov independently proved that
$$M(\chi) \ll \sqrt{q} \log q.$$
On the other hand, an easy argument (based on applying Parseval's Theorem to the P\'olya Fourier series \eqref{PolyaFourier} attached to $\chi$) shows that $M(\chi)\gg \sqrt{q}$ for all primitive characters modulo $q$. 
Though one can establish the P\'olya-Vinogradov inequality using only basic Fourier analysis, improving on it has proved to be a very difficult problem, and resisted substantial progress outside of special cases. 
We should also note that any such improvement would have important consequences on several important quantities in analytic number theory, including class numbers of quadratic fields, short character sums, and the least quadratic non-residue (see for example the recent works \cite{BoGo}, \cite{GrMa} and \cite{Ma}). 
In 2007, Granville and Soundararajan \cite{GrSo07} made an important breakthrough by showing that the P\'olya-Vinogradov inequality can substantially be improved for characters of a fixed odd order. Further improvements to this case were obtained by Goldmakher \cite{Go}, and subsequently by Lamzouri and Mangerel \cite{LaMa}. 

Since $\sqrt{q}\ll M(\chi)\ll \sqrt{q}\log q$ for all primitive characters modulo $q$, a natural question is to determine the maximal order of $M(\chi)$. Assuming the Generalized Riemann Hypothesis GRH, Montgomery and Vaughan \cite{MV77} proved that $M(\chi)\ll \sqrt{q}\log\log q$. This turns out to be optimal (up to a constant factor) in view of an older result of Paley \cite{Pa} who proved the existence of an infinite family of quadratic characters for which $M(\chi)$ is that large. Granville and Soundararajan \cite{GrSo07} refined Montgomery and Vaughan's conditional result and showed that 
\begin{equation}\label{UpperGRH}
 M(\chi)\leq \Big(2 C_{\chi}+o(1)\Big) \sqrt{q}\log\log q,
 \end{equation}
for all primitive characters $\chi$ modulo $q$, where $C_{\chi}=e^{\gamma}/\pi$ if $\chi$ is odd, and $C_{\chi}=e^{\gamma}/(\sqrt{3}\pi)$ if $\chi$ is even. On the other hand, Paley's result was refined by Bateman and Chowla \cite{BaCh}, who proved the existence of an infinite sequence of moduli $q$, and primitive quadratic characters $\chi \bmod q$, such that 
\begin{equation}\label{LowerOmega}
M(\chi) \geq  \Big(\frac{e^{\gamma}}{\pi}+o(1)\Big) \sqrt{q}\log\log q.
\end{equation}
This result was extended to the family of primitive characters modulo a large prime $q$ by several authors (see for example Theorem 3 of \cite{GrSo07}). Finally, we should note that Granville and Soundararajan \cite{GrSo07} conjectured that Bateman and Chowla's Omega result should correspond to the true extreme values of $M(\chi)$, namely that 
\begin{equation}\label{ConjectureGrSo}
 M(\chi)\leq \Big(C_{\chi}+o(1)\Big) \sqrt{q}\log\log q, 
 \end{equation}
 for all primitive characters $\chi$. 

\subsection{ The distribution of character sums} In view of \eqref{UpperGRH}, \eqref{LowerOmega} and \eqref{ConjectureGrSo} it is natural to renormalize $M(\chi)$ by defining 
$$ m(\chi):= \frac{e^{-\gamma} \pi}{\sqrt{q}} M(\chi).$$  
Montgomery and Vaughan \cite{MV79} were the first to study the distribution of $m(\chi)$ over families of Dirichlet characters. In particular, they showed that $m(\chi)$ is bounded (and hence $M(\chi)\ll \sqrt{q}$) for most characters. Let $q$ be a large prime and 
$$ \Phi_q(\tau):= \frac{1}{\varphi(q)} | \{\chi\neq \chi_0 \ (\bmod \ q) : m(\chi)>\tau\}|,$$
where $\varphi(q)$ is Euler's totient function. It follows from Montgomery and Vaughan's work \cite{MV79} that 
$$ \Phi_q(\tau) \ll_A \tau^{-A}, $$ for any constant $A\geq 1$. 
This estimate was improved by Bober and Goldmakher \cite{BoGo} for fixed $\tau$, and subsequently by Bober, Goldmakher, Granville and Koukoulopoulos \cite{BGGK} who showed that uniformly for $2\leq \tau \leq \log\log q-M$ (where $M\geq 4$ is a parameter) we have 
\begin{equation}\label{BGGK}
\exp\left(-\frac{e^{\tau+A_0-\eta}}{\tau}\big(1+O(E_1(\tau, M))\big)\right) \leq \Phi_q(\tau)\leq \exp\left(-\frac{e^{\tau-2-\eta}}{\tau}\big(1+O(E_2(\tau))\big)\right), 
\end{equation}
where $E_1(\tau, M)= (\log \tau)^2/\sqrt{\tau}+e^{-M/2}$, $E_2(\tau)=(\log \tau)/\tau$, $\eta:= e^{-\gamma}\log 2$, and $A_0=0.088546...$ is an explicit constant which can be expressed as a sum of integrals over the modified Bessel function of the first kind. In particular, this result gives strong evidence to the Granville-Soundararajan Conjecture \eqref{ConjectureGrSo} for odd primitive characters modulo a large prime $q$.

Although the family of quadratic characters was the first for which large character sums were exhibited by Paley \cite{Pa}, and then later by Bateman and Chowla \cite{BaCh}, no such distribution results are known in this case. In fact, the only known result for real characters is a result of Montgomery and Vaughan \cite{MV79} who showed that $\max_{t}\left|\sum_{n\leq t}\left(\frac{n}{p}\right)\right|\ll \sqrt{p}$ for most primes $p\leq x$. The main reason which explains why the results of Bober-Goldmakher \cite{BoGo} and Bober-Goldmakher-Granville-Koukoulopoulos \cite{BGGK} do not carry over to this setting is the fact that they rely heavily on the orthogonality relations for characters modulo $q$. 
Indeed, the key ingredient in the proof of \eqref{BGGK} is to estimate the off-diagonal terms when bounding large moments of the tail of the sum in P\'olya's Fourier expansion \eqref{PolyaFourier} below, which the authors of \cite{BGGK} successfully achieved using intricate estimates involving divisor functions.


In this paper, we overcome this problem for the family of quadratic characters by using a different and more streamlined approach which relies on the quadratic large sieve. Before stating our results we need some notation. For a fundamental discriminant $d$ we let $\chi_d(\cdot)= \left(\frac{d}{\cdot}\right)$ be the Kronecker symbol modulo $|d|$. It is useful here to consider the cases of positive and negative discriminants separately, since as Theorems \ref{Main} and \ref{MainPositive} below show, the distribution of large values of $m(\chi_d)$ behaves differently in each case. The difference between these cases lies in the fact that the character $\chi_d$ is even if $d$ is positive, and is odd if $d$ is negative. Thus, in view of Conjecture  \eqref{ConjectureGrSo} we expect the extreme values of $m(\chi_d)$ for $d>0$ to be smaller by a factor of $\sqrt{3}$ compared to the case $d<0$. We denote by $\mathcal{F}(x)$ the set of fundamental discriminants $d$ such that $|d|\leq x$, and let $\mathcal{F}^{+}(x)$ (respectively $\mathcal{F}^{-}(x)$) be the subset of $\mathcal{F}(x)$ consisting of positive (respectively negative) discriminants. Then we have the following standard estimates (see for example Lemma 4.1 of \cite{GrSo06})
$$  |\mathcal{F}^{\pm}(x)|= \frac{3}{\pi^2}x +O_{\ep}(x^{1/2+\ep}).$$
Our goal is to estimate the distribution functions $$\Psi^{\pm}_x(\tau):=\frac{1}{|\mathcal{F}^{\pm}(x)|}  |\{d\in \mathcal{F}^{\pm}(x) : m(\chi_d)>\tau\}|,$$
uniformly for $\tau$ in the range $2\leq \tau\leq (1+o(1))\log\log x$ in the case of  $\Psi^{-}_x(\tau)$, and $2\leq \tau\leq (1/\sqrt{3}+o(1))\log\log x$ in the case of  $\Psi^{+}_x(\tau)$. Conjecture \eqref{ConjectureGrSo} implies that these ranges are best possible up to the term $o(\log\log x)$. Here and throughout we shall denote by $\log_k$ the $k$-th iteration of the natural logarithm. We prove the following results.
\begin{thm}\label{Main}
Let $\eta= e^{-\gamma}\log 2$, and $x$ be a large real number. Uniformly for $\tau$ in the range $2\leq \tau \leq \log_2 x+\log_5 x-\log_4 x- C$ (where $C>0$ is a suitably large constant) we have 
$$ \exp\left(-\frac{e^{\tau-\eta
-B_0}}{ \tau} \left(1+O\left(\frac{(\log \tau)^2 }{\sqrt{\tau}}\right)\right)\right)\leq  \Psi^{-}_x(\tau)\leq \exp\left(-\frac{e^{\tau-\eta -\log 2 - 2}}{ \tau} \left(1+O\left(\frac{\log \tau }{\tau}\right)\right)\right),$$
where 
\begin{equation}
B_0= \int_0^1 \frac{\tanh y}{y} dy + \int_1^{\infty} \frac{\tanh y-1}{y}dy=0.8187...
\end{equation}
\end{thm}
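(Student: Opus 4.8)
The plan is to analyze $m(\chi_d) = (e^{-\gamma}\pi/\sqrt{|d|})\,M(\chi_d)$ via P\'olya's Fourier expansion, which for an odd primitive character $\chi$ modulo $q$ takes the shape
\[
\sum_{n\leq t}\chi(n) \;=\; \frac{\tau(\chi)}{\pi}\,\Big(\,\text{something like}\ \sum_{n\geq 1}\frac{\chi(n)(1-e(-nt/q))}{n}\,\Big) \;+\;(\text{small}),
\]
so that (taking the supremum over $t$ and using $|\tau(\chi_d)| = \sqrt{|d|}$) the renormalized quantity $m(\chi_d)$ is, up to negligible error, the maximum over $\theta$ of $|e^{-\gamma}\sum_{n}\chi_d(n)(1-e(-n\theta))/n|$. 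After truncating the Dirichlet series at a suitable point (say $n\leq y$ with $\log y \asymp e^{\tau}$, balancing the tail contribution against the target probability), one reduces the problem to understanding, for a random fundamental discriminant $d\in\mathcal F^-(x)$, the distribution of the maximum of a short random Euler-product-like object $\sum_{n\leq y}\chi_d(n)/n$ with suitable oscillating coefficients. The model is that $(\chi_d(p))_p$ behaves like independent random variables taking $\pm1$ (and $0$ at ramified primes); for an \emph{odd} character there is no parity constraint forcing $\chi_d(-1)$, which is exactly what produces the constant $\eta = e^{-\gamma}\log 2$ and distinguishes the negative-discriminant case from the positive one.

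\textbf{Upper bound.} First I would show that $m(\chi_d)>\tau$ forces the (smoothed, truncated) sum over primes $\sum_{p\leq y}\chi_d(p)/p$ to be large — essentially $\geq \log_2 y + \text{const} \approx \tau - \eta - \log 2 - 2 + o(1)$ after unfolding the normalization — by a standard argument bounding the contribution of prime powers and of the tail. Then I would bound the measure of the set of $d$ for which such a prime sum is large by computing a high moment: raising $\sum_{p\leq y}\chi_d(p)/p$ to the $2k$-th power and summing over $d\in\mathcal F^-(x)$, the diagonal terms (where the $2k$ primes pair up) give the Gaussian-type main term $\asymp (2k)!/(k!) (\log_2 y)^{2k}\big/ 2^{2k}$, and — this is the crucial point where the paper's method diverges from \cite{BGGK} — the off-diagonal terms are controlled not by divisor-function gymnastics but directly by the \emph{quadratic large sieve}: $\sum_{|d|\leq x}|\sum_{n\leq N}a_n\chi_d(n)|^2 \ll (x+N)\sum_{n\leq N}|a_n|^2$ (up to squarefree/character-normalization issues), which is exactly the tool that handles the family $\{\chi_d\}$ without orthogonality. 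Optimizing $k$ (roughly $k \asymp e^{\tau}/\tau$) then yields the stated upper bound for $\Psi^-_x(\tau)$.

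\textbf{Lower bound.} Here I would exhibit enough discriminants with large $m(\chi_d)$: fix a prescribed sign pattern on all primes $p\leq z$ (for a threshold $z$ with $\log z \asymp \tau$), namely demand $\chi_d(p)=1$ for all $p\leq z$ together with the right choice at the archimedean place, choosing $t$ (equivalently $\theta$) so that each term $\chi_d(n)(1-e(-n\theta))/n$ with $n$ $z$-smooth is made positive and close to its maximal size. Counting fundamental discriminants $d$, $|d|\leq x$, satisfying such congruence conditions modulo $\prod_{p\leq z}p$ gives a density $\gg \prod_{p\leq z}(1/2+O(1/p)) \gg \exp(-\pi(z)\log 2 - \dots)$, and since by Mertens $\pi(z)$ relates to $\log z$ only through prime-counting, one must push $z$ as large as possible; the bookkeeping of the smooth tail's contribution (via an integral of $\tanh y / y$, which is where $B_0$ enters) fixes the precise constant in the exponent. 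The main obstacle I anticipate is making the large-sieve off-diagonal estimate strong enough to be admissible all the way up to $\tau = \log_2 x + \log_5 x - \log_4 x - C$: the large sieve loses a factor when $N = y$ is not small compared to $x$, so one needs $y = x^{o(1)}$, and threading the truncation length, the moment order $k$, and this constraint simultaneously — together with handling the non-squarefree moduli and the distinction between $\chi_d$ and the genuine primitive character mod $|d|$ — is the delicate part that the streamlined approach is designed to manage.
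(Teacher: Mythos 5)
Your overall orientation (Pólya's expansion, the quadratic large sieve as the replacement for orthogonality, the need to keep $N$ small relative to $x$, and moment optimization with $k\asymp e^\tau/\tau$) is correct, but the proposal misses the central organizational idea of the paper and misattributes one of the constants.

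The key gap is in your treatment of the tail. You write that $m(\chi_d)>\tau$ forces the $y$-truncated prime sum to be large ``by a standard argument bounding the contribution of prime powers and of the tail.'' This is precisely where the main technical difficulty lives, and it is anything but standard: the Pólya series runs up to $Z=x^{21/40}$, so the non-friable portion $\sum_{n\leq Z,\,P^+(n)>y}\chi_d(n)e(n\alpha)/n$ has length comparable to $\sqrt{|d|}$ and admits no useful deterministic bound. The paper's strategy is the opposite of yours: the $y$-friable part is bounded \emph{deterministically} for every $d$ (Corollary~3.5 of \cite{BGGK} gives $\leq 2e^\gamma\log y + 2\log 2 + O(\log\log y/\log y)$), and the whole probabilistic content is concentrated in showing that the \emph{tail} over non-friable $n$ is small outside a tiny exceptional set — this is Theorem~\ref{KeyResult}, proved by splitting into $n\leq Y$ and $Y<n\leq Z$ with $Y=\exp(C\log_2 x\log_3 x/\log_4 x)$, taking tailored moment orders $\ell_N$ per dyadic block so that $N^{\ell_N}\approx x$, and invoking Elliott's and Heath-Brown's large sieves as appropriate. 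You instead propose to bound moments of $\sum_{p\leq y}\chi_d(p)/p$ by the large sieve; but those moments are easy (quasi-orthogonality suffices for small $y$ and the needed distributional input is already Granville--Soundararajan's Theorem~\ref{GrSoThm}), whereas the hard analysis is for the tail, which your plan glosses over. Without the tailored-moment control of the tail your upper bound has no way to produce the explicit $\tau-\eta-\log 2-2$ in the exponent, nor the range $\tau\leq\log_2 x+\log_5 x-\log_4 x-C$.

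Two further points. First, the constant $\eta=e^{-\gamma}\log 2$ does not come from ``no parity constraint forcing $\chi_d(-1)$'' — parity is automatic here since every $d<0$ gives an odd character. It comes from the specific gain at $\alpha=1/2$ in the Pólya series: for odd $\chi$, $\sum_{n\leq q/2}\chi(n)=(2-\chi(2))\mathcal G(\chi)L(1,\chi)/(i\pi)$, and the [BGGK] structure theorem (Theorem~\ref{BGGKOdd}) upgrades this to $m(\chi)>e^{-\gamma}(|L(1,\chi)|+\log 2)+o(1)$ whenever the non-friable tail is $\leq 1$. Second, your lower-bound sketch (prescribe $\chi_d(p)=1$ for $p\leq z$ and choose $\theta$ term-by-term) cannot be made to work as stated: you cannot choose a single $\theta$ making all friable terms simultaneously close to maximal. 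The paper sidesteps this entirely via the pointwise inequality $m(\chi_d)\geq e^{-\gamma}L(1,\chi_d)$ together with Granville--Soundararajan's distribution result for $L(1,\chi_d)$, then sharpens by combining Theorem~\ref{KeyResult} (to guarantee the tail is $\leq 1$ for almost all $d$ with $L(1,\chi_d)$ large) with Theorem~\ref{BGGKOdd} to pick up the $\eta$. Your sign-prescription idea is used in the paper — but for prime discriminants (Lemma~\ref{SignsLegendreLong}), not for the general family, and even there the passage from prescribed signs to a lower bound on $m$ goes through the $L$-value, not through a pointwise choice of $\theta$.
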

\begin{thm}\label{MainPositive}
Let $B_0$ be the constant in Theorem \ref{Main}. There exists positive constants $C_1$ and $C_2$ such that uniformly for $\tau$ in the range $2\leq \tau \leq (\log_2 x+\log_5 x-\log_4 x- C_1)/\sqrt{3}$ we have 
$$ \exp\left(-\frac{e^{\sqrt{3}\tau-B_0}}{\sqrt{3}\tau} \left(1+O\left(\frac{1 }{\tau}\right)\right)\right)\leq \Psi^+_x(\tau)\ll \exp\left(-\frac{e^{\sqrt{3}\tau}}{ \tau^{C_2}}\right).$$
\end{thm}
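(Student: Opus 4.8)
The plan is to prove Theorem \ref{MainPositive} by adapting the argument used for Theorem \ref{Main}, tracking carefully the effect of $\chi_d$ being even for positive fundamental discriminants. The starting point is the P\'olya Fourier expansion: for a primitive character $\chi_d$ with $d>0$ (so $\chi_d$ even), one has $M(\chi_d) = \frac{\sqrt{|d|}}{\pi}\,\bigl|\sum_{1\le n\le y}\frac{\chi_d(n)}{n}\,(\text{trig factor}) + (\text{tail})\bigr|$ where, because $\chi_d$ is even, the relevant normalized quantity $m(\chi_d)$ is governed by $\frac{\sqrt 3}{\pi}\cdot\frac{e^{-\gamma}\pi}{\sqrt 3}$-type expressions; concretely the even case produces an extra factor $1/\sqrt 3$ in the Euler-product heuristic, which is why the thresholds in Theorem \ref{MainPositive} are at $\sqrt 3\tau$ rather than $\tau$. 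So I would first isolate a smooth short-sum proxy $P_y(\chi_d) := \sum_{p\le y}\frac{\chi_d(p)}{p}$ (more precisely a suitable logarithm-of-Euler-product, with prime powers included), show via the P\'olya expansion and the quadratic large sieve bound on the tail that $m(\chi_d) > \tau$ forces $|\mathrm{Re}\, \sum_{p\le y}\frac{\chi_d(p)}{p}(1 + \text{something})|$ or the analogous resonant quantity to exceed roughly $\sqrt 3\tau + \log_2 y + B_0 + o(1)$ (the even analogue of the odd computation in Theorem \ref{Main}), up to the $O(1/\tau)$ relative error; and conversely that this largeness of the prime sum is essentially enough to guarantee $m(\chi_d) > \tau$.

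For the \textbf{lower bound} $\Psi^+_x(\tau) \ge \exp\bigl(-\frac{e^{\sqrt 3\tau - B_0}}{\sqrt 3\tau}(1 + O(1/\tau))\bigr)$, I would count discriminants $d \in \mathcal F^+(x)$ for which $\chi_d(p) = 1$ (or the sign making the resonator large) for all primes $p$ up to a threshold $z$ chosen so that $\sum_{p\le z}\frac1p \approx \sqrt 3\tau + \log_2 z + B_0$; solving gives $\log z \approx e^{\sqrt 3\tau - B_0}/(\sqrt 3\tau)$ after the $\log_2 z$ cancellation, and the density of such $d$ is $\prod_{p\le z}(\text{local density at }p) \approx \prod_{p\le z}\frac{1}{2}\cdot(1+o(1))= 2^{-\pi(z)}$, which by the prime number theorem is $\exp(-(1+o(1))z/\log z) = \exp(-\frac{e^{\sqrt 3\tau-B_0}}{\sqrt 3\tau}(1+O(\cdot)))$. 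The constant $B_0 = \int_0^1\frac{\tanh y}{y}\,dy + \int_1^\infty\frac{\tanh y - 1}{y}\,dy$ arises exactly as in Theorem \ref{Main} from the saddle-point analysis of the contribution of the prime sum versus the Euler product correction; the factor-$\sqrt 3$ scaling is the only structural change, and the $\log 2$ inside $\eta$ that appears in Theorem \ref{Main} is absent here because for the one-sided count we take $\chi_d(p) = +1$ rather than exploiting a $\{-1,+1\}$ distribution — one must check this bookkeeping is consistent between the two theorems. One must also verify, using the quadratic large sieve (Heath-Brown's version), that such $d$ with prescribed quadratic residue behaviour up to $z$ genuinely exist in the expected density in $\mathcal F^+(x)$, which requires $z$ not too large relative to $x$ — precisely the constraint $\sqrt 3\tau \le \log_2 x + \log_5 x - \log_4 x - C_1$.

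For the \textbf{upper bound} $\Psi^+_x(\tau) \ll \exp(-e^{\sqrt 3\tau}/\tau^{C_2})$, which is weaker than the matching bound one gets in Theorem \ref{Main}, I would run the moment method: for a well-chosen even integer $k \asymp z/\log z$ with $z$ as above, estimate $\sum_{d \in \mathcal F^+(x)} \bigl|\sum_{p\le z}\frac{\chi_d(p)}{p}\bigr|^{2k}$ by expanding and applying the quadratic large sieve to control the off-diagonal terms, getting a bound of the shape $|\mathcal F^+(x)|\cdot\bigl(C\sum_{p\le z}\frac1p\bigr)^{2k}\cdot k! \cdot(\text{error})$ provided $z^{2k} = z^{O(z/\log z)}$ is a small power of $x$; then Chebyshev/Markov gives $\Psi^+_x(\tau) \ll (C(\sqrt 3\tau + \log_2 z)/\sqrt 3\tau)^{-2k} \asymp$ something like $\exp(-c\,k) = \exp(-e^{\sqrt 3\tau}/\tau^{O(1)})$. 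The loss of a $\tau^{C_2}$ factor (compared to the sharp $e^{\sqrt 3\tau - \text{const}}/\tau$ in the negative case) comes from not being able to push $k$ quite as high because of the large-sieve constraint interacting with the extra factor $\sqrt 3$ in the exponent, which roughly squares the relevant length $z$ and so halves the admissible moment order in the crucial regime near the top of the $\tau$-range; I would not try to optimize this further since the theorem only claims the weaker bound.

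The \textbf{main obstacle} I anticipate is the off-diagonal analysis in the moment computation for the upper bound: when expanding $\bigl(\sum_{p\le z}\chi_d(p)/p\bigr)^{2k}$ and summing over $d\in\mathcal F^+(x)$, the terms where the product $\prod p_i$ of primes is a non-square contribute via $\sum_{d\le x}\chi_d(m)$, and bounding this by the quadratic large sieve is only useful when $m = \prod p_i \le x^{1-\delta}$, forcing an upper limit on $2k\log z$ and hence on $k$; getting the interplay between the choice of $z$, the moment order $k$, and the constraint $\sqrt 3\tau \le \log_2 x - \cdots$ to yield an exponent that is genuinely $e^{\sqrt 3\tau}/\tau^{C_2}$ (rather than something qualitatively weaker) is the delicate point, and it is precisely here that the even/odd asymmetry forces the weaker conclusion. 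The lower bound, by contrast, should be essentially a transcription of the corresponding step in Theorem \ref{Main} with the $\sqrt 3$ inserted and the $\log 2$ handled appropriately.
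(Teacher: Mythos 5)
Your proposal and the paper's proof diverge in a structurally important way, and the gap in your upper bound is genuine.

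\textbf{Lower bound.} Your idea of prescribing $\chi_d(p)=+1$ for all $p\le z$, counting these $d$, and invoking the large sieve to verify the expected density is the right spirit, and is indeed what underlies the Granville--Soundararajan distribution result. But the paper does \emph{not} reprove this: it cites Theorem \ref{GrSoThm} (the GrSo03 asymptotic for the distribution of $L(1,\chi_d\psi)$ with $\psi=\chi_{-3}$) together with the elementary inequality $M(\psi)\ge \frac{\sqrt{3q}}{2\pi}|L(1,\psi\chi_{-3})|$ for even $\psi$. Your ``count discriminants with all $\chi_d(p)=+1$'' heuristic only yields $\exp(-(c+o(1))\,z/\log z)$ with an unspecified constant $c$; extracting the \emph{sharp} constant $B_0=\int_0^1\frac{\tanh y}{y}\,dy+\int_1^\infty\frac{\tanh y-1}{y}\,dy$ requires the saddle-point analysis of complex moments from GrSo03, which your sketch does not reproduce. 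Also, your explanation for the disappearance of $\eta=e^{-\gamma}\log 2$ relative to Theorem \ref{Main} (``one-sided count'') is not what actually happens: in the odd case the paper uses Theorem \ref{BGGKOdd}, which gives the gain $m(\chi)>e^{-\gamma}(|L(1,\chi)|+\log 2)+O(\cdot)$; no analogous $\log 2$ gain is available from Theorem \ref{BGGKEven}, whose error is a far too large $O(\log\log y)$, so the paper drops down to the cruder inequality \eqref{LowerBoundMLEven}, and $\eta$ simply never enters.

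\textbf{Upper bound.} Here your proposal is missing the key reduction. The paper does \emph{not} run a moment method directly on $\sum_{p\le z}\chi_d(p)/p$. It instead uses a two-step conversion: (i) Theorem \ref{KeyResult}, which controls $\max_\alpha$ of the tail of the P\'olya series over non-friable $n$; and then, crucially, (ii) the structure theorem \ref{BGGKEven}, which says that once $m(\chi_d)$ is large and the tail is small, $m(\chi_d)=\frac{e^{-\gamma}\sqrt3}{2}|L(1,\chi_d\chi_{-3})|+O(\log\log y)$. This converts the problem into a statement about the distribution of $|L(1,\chi_d\chi_{-3})|$, to which Theorem \ref{GrSoThm} applies. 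Your direct moment attack on the prime sum runs into exactly the obstacle you anticipate — the large-sieve off-diagonal constraint forces $z^{2k}\ll x$, which is incompatible with $z\asymp e^{\sqrt3\tau}$ and $k\asymp z/\log z$ near the top of the $\tau$-range — and your proposal does not resolve it. In particular your diagnosis of \emph{why} the bound is only $\exp(-e^{\sqrt3\tau}/\tau^{C_2})$ is wrong: the $\tau^{C_2}$ loss is not a large-sieve limitation but is inherited entirely from the $O(\log\log y)=O(\log\tau)$ error term in Theorem \ref{BGGKEven}, which perturbs the threshold in the application of Theorem \ref{GrSoThm} by $O(\log\tau)$ and thereby costs a factor $\tau^{O(1)}$ in the exponent. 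Without invoking a structure result of this type, I do not see how to close your moment argument to reach the claimed exponent.
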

\begin{rem} Theorems \ref{Main} and \ref{MainPositive} (the latter is the analogue of Theorem 1.3 of \cite{BGGK}) give strong evidence to the Granville-Soundararajan Conjecture \eqref{ConjectureGrSo} for the family of quadratic characters. Moreover, a direct consequence of these results is the fact that almost all fundamental discriminants $|d|\leq x$ for which $M(\chi_d)$ is large are negative.
\end{rem}

\begin{rem} The lower bounds in \eqref{BGGK} and Theorems \ref{Main} and \ref{MainPositive} are consequences of the works of Granville and Soundararajan \cite{GrSo03} and \cite{GrSo06} on the distribution of $|L(1, \chi)|$ (over non-principal characters modulo $q$) in the case of \eqref{BGGK}, and $L(1, \chi_d)$ (over fundamental discriminants $|d|\leq x$) in the case of Theorems \ref{Main} and \ref{MainPositive} (see Section 2 for more details). Moreover, note that we obtain a slightly different constant in the upper bound of Theorem \ref{Main} compared to \eqref{BGGK}. We believe that this is caused by the different nature of the family of quadratic characters. Indeed in our case, the difference between the constants in the upper and lower bounds of Theorem \ref{Main} is $2+\log 2- B_0\approx 1.8744$, which is slightly smaller than the analogous difference  $2+A_0\approx 2.0885$ for the family of non-principal characters modulo a large prime $q$ in \eqref{BGGK}.
Finally, we note that by using our approach, we can derive an easier proof of the upper bound of \eqref{BGGK} in the range $2\leq \tau\leq \log_2 q+\log_5 q-\log_4 q- C$. This is achieved by following the exact same argument of the proof of the upper bound of Theorem \ref{Main}, and replacing the quadratic large sieve inequalities of Heath-Brown and Elliott (see Lemma \ref{LargeSieve} below) by the following large sieve estimate of Montgomery (see Theorem 6.2 of \cite{MoBook}) 
$$\sum_{\chi\bmod q} \left|\sum_{n\leq N} a_n\chi(n)\right|^{2}\ll (q+N)\sum_{n\leq N} |a_n|^2,$$
which hold for an arbitrary complex sequence $\{a_n\}_{n\geq 1}$, and all integers $q\geq 2$. Although our approach gives a slightly smaller range of $\tau$ in this case, it has the advantage of extending the upper bound of \eqref{BGGK} to all moduli $q$. 
\end{rem}


\begin{rem}
Using our work we can show that the structure results for large character sums obtained by Bober, Goldmakher, Granville and Koukoulopoulos in Section 2 of \cite{BGGK} for the family of non-principal characters modulo a large prime $q$,  hold verbatim for the family of quadratic characters $\chi_d$ attached to fundamental discriminants $|d|\leq x$. Since these results are technical, and the statements are exactly the same, we prefer to not state them here and refer the reader to the exact statements in \cite{BGGK}. The proofs follow along the  same lines of \cite{BGGK} 
by using the auxiliary lemmas therein which hold for all primitive characters (and are derived using the ``pretentious'' theory of character sums developed by Granville and Soundararajan in \cite{GrSo07}), and  replacing the ingredients of the proof of Theorem 1.1 in \cite{BGGK} by those of the proof of Theorem \ref{Main} of our paper.
\end{rem}

\subsection{Analogous results for prime discriminants} Using our approach we establish similar results to Theorems \ref{Main} and \ref{MainPositive} over prime discriminants. The analogous lower bounds are direct consequences of newly established results on the distribution of $L(1, (\frac{\cdot}{p}))$, which we shall describe in the next section. 
Furthermore, the analogous upper bounds will be obtained using the same methods of proofs of Theorems \ref{Main} and \ref{MainPositive}, together with the large sieve inequality of Montgomery and Vaughan \cite{MV79} for prime discriminants (see Lemma \ref{LargeSieveMV} below). Since the Legendre symbol modulo $p$ is even if $p\equiv 1\bmod 4$, and is odd if $p\equiv 3 \bmod 4$, we shall consider the cases of primes congruent to $1$ and $3$ modulo $4$ separately. 
For $a\in \{1, 3\}$, let 
$\Psi_{x, a}^{\textup{ prime }}(\tau)$ be the proportion of primes $p\leq x$ such that $p\equiv a \bmod 4$ and $m\left(\left(\frac{\cdot}{p}\right)\right)>\tau$. 
\begin{thm}\label{MainPrimes} Let $\eta$ and $B_0$ be the constants in Theorem \ref{Main}. There exists positive constants $C_1, C_2$ such that
\begin{itemize}
\item[1.] Uniformly in the range 
$2\leq \tau \leq \log_2 x+\log_5 x-\log_4 x- C_1$ we have
$$
\Psi_{x, 3}^{\textup{ prime }}(\tau)\leq \exp\left(-\frac{e^{\tau-\eta -\log 2 - 2}}{ \tau} \left(1+O\left(\frac{\log \tau }{\tau}\right)\right)\right).
$$
\item[2.] Uniformly in the range $2\leq \tau \leq (\log_2 x)/2-2 \log_3 x$ we have
\begin{equation}\label{LowerBPrimeDis}
\Psi_{x, 3}^{\textup{ prime }}(\tau) \geq \exp\left(-\frac{e^{\tau-\eta
-B_0}}{ \tau} (1+o(1))\right).
\end{equation}
\item[3.] And uniformly in the range $(\log_2 x)/2-2 \log_3 x \leq \tau \leq \log_2x-\log_3 x-C_1$ we have
$$
\Psi_{x, 3}^{\textup{ prime }}(\tau) \geq \exp\left(-C_2\tau e^{\tau} \right).
$$
\end{itemize}
\end{thm}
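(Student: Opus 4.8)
The plan is to handle the upper bound (part~1) and the two lower bounds (parts~2 and 3) separately, in each case passing from $m\bigl(\bigl(\tfrac\cdot p\bigr)\bigr)$ to information about the Euler product $L\bigl(1,\bigl(\tfrac\cdot p\bigr)\bigr)$ (suitably twisted) and then invoking either the large sieve for prime discriminants (Lemma~\ref{LargeSieveMV}) or the distribution results for $L\bigl(1,\bigl(\tfrac\cdot p\bigr)\bigr)$ from Section~2. For part~1 I would repeat the proof of the upper bound of Theorem~\ref{Main} essentially verbatim, the hypothesis $p\equiv 3\bmod 4$ being used only to guarantee that $\bigl(\tfrac\cdot p\bigr)$ is odd, so that we are exactly in the situation of negative fundamental discriminants. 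Via P\'olya's Fourier expansion~\eqref{PolyaFourier} and the pretentious estimates of Granville and Soundararajan, the event $m\bigl(\bigl(\tfrac\cdot p\bigr)\bigr)>\tau$ forces, for a threshold $y=y(\tau)$ calibrated so that $\tfrac{e^{\tau-\eta-\log 2-2}}{\tau}\asymp\log y$, a twisted partial Euler product $\prod_{\ell\le y}\bigl(1-\tfrac1\ell\bigl(\tfrac\ell p\bigr)\psi(\ell)\bigr)^{-1}$ to be abnormally large for some character $\psi$ of bounded conductor; equivalently a short Dirichlet polynomial $\sum_n\tfrac{a_n}{n}\bigl(\tfrac np\bigr)$ with $a_n$ multiplicative and supported on $y$-smooth integers is large. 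One then bounds the number of such $p\le x$ by forming the $2k$-th moment with $k$ of order $\log x/\log y$, expanding the power, and applying Lemma~\ref{LargeSieveMV}: the diagonal term contributes $\ll\tfrac{x}{\log x}\prod_{\ell\le y}(\cdots)$, while the off-diagonal term is $O\bigl(x^{1/2+o(1)}\bigr)$ and hence negligible. Dividing by $|\{p\le x:\ p\equiv 3\bmod 4\}|\sim x/(2\log x)$ and by the threshold raised to the power $2k$, and optimising in $k$, yields the claimed bound with constant $\eta+\log 2+2$ (the $\log 2$ coming from the conductor-$2$ twist and the $+2$ from the smoothing step), just as in~\eqref{BGGK} and Theorem~\ref{Main}. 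The only difference from Theorem~\ref{Main} is that Lemma~\ref{LargeSieveMV} is weaker than the Heath-Brown and Elliott inequalities, which is what restricts $\tau$ to the range $\le\log_2 x+\log_5 x-\log_4 x-C_1$.

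For parts~2 and 3 I would first establish the implication that if $L\bigl(1,\bigl(\tfrac\cdot p\bigr)\bigr)\ge e^{\gamma}\bigl(\tau+O(\log\tau)\bigr)$ and, in addition, $\bigl(\tfrac\ell p\bigr)$ agrees with $\chi_{-4}(\ell)$ for all primes $\ell$ up to a slowly growing bound (so that a resonating point $\alpha_0$ close to $1/4$ may be used in P\'olya's expansion), then $m\bigl(\bigl(\tfrac\cdot p\bigr)\bigr)>\tau$; this uses the same Granville--Soundararajan lemmas as the lower bound of Theorem~\ref{Main}, the shifts $\eta$ and $B_0$ arising from the Euler factor at $2$ and from the large-deviation rate of the distribution of $L(1,\cdot)$. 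Consequently $\Psi^{\textup{ prime }}_{x,3}(\tau)$ is at least the proportion of primes $p\le x$ with $p\equiv 3\bmod 4$ and $L\bigl(1,\bigl(\tfrac\cdot p\bigr)\bigr)$ that large, and the two ranges then match precisely the two regimes of the distribution of $L\bigl(1,\bigl(\tfrac\cdot p\bigr)\bigr)$ over primes proved in Section~2: for $2\le\tau\le(\log_2 x)/2-2\log_3 x$ this proportion is $\ge\exp\bigl(-\tfrac{e^{\tau-\eta-B_0}}{\tau}(1+o(1))\bigr)$, which gives part~2, whereas in the extreme range $(\log_2 x)/2-2\log_3 x\le\tau\le\log_2 x-\log_3 x-C_1$, where $\tau$ approaches the maximal possible size $\sim\log_2 x$ of $m\bigl(\bigl(\tfrac\cdot p\bigr)\bigr)$, only the cruder lower bound $\exp(-C_2\tau e^{\tau})$ survives, giving part~3.

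The hard part is the input on the distribution of $L\bigl(1,\bigl(\tfrac\cdot p\bigr)\bigr)$ in the extreme range, and with it part~3. Producing many primes $p\le x$ with $L\bigl(1,\bigl(\tfrac\cdot p\bigr)\bigr)$ as large as $\exp\bigl((\tfrac12-o(1))\log_2 x\bigr)$ has to go through the prime large sieve of Montgomery and Vaughan, which is considerably weaker than the quadratic large sieve; this both limits the length of the Dirichlet polynomials one can control (hence the transition precisely at $\tau\approx(\log_2 x)/2$) and degrades the rate from the clean $\exp\bigl(-e^{\tau-\eta-B_0}/\tau\bigr)$ to $\exp(-C_2\tau e^{\tau})$ once $\tau$ leaves the moderate range. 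The remaining points --- checking that the constants $\eta$ and $B_0$ produced by the prime-discriminant analysis in the moderate range agree with those in Theorem~\ref{Main}, and that the resonance argument connecting $L\bigl(1,\bigl(\tfrac\cdot p\bigr)\bigr)$ with $m\bigl(\bigl(\tfrac\cdot p\bigr)\bigr)$ is unaffected by restricting to prime discriminants --- are routine.
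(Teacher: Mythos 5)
Your overall architecture is right: part~1 reduces to the ``small tail'' result over prime discriminants, parts~2 and~3 reduce to the distribution of $L\bigl(1,\bigl(\tfrac\cdot p\bigr)\bigr)$ and the inequality $m\ge e^{-\gamma}L$, and the transition at $\tau\approx(\log_2 x)/2$ is caused precisely by the weakness of the Montgomery--Vaughan large sieve for prime discriminants. Parts~1 and~3 essentially match what the paper does: part~1 is the argument of the upper bound of Theorem~\ref{Main} with Theorem~\ref{KeyResult} replaced by Theorem~\ref{KeyResultPrimes}, and part~3 is the lower bound of~\eqref{EstimateDistribL1psi2} combined with~\eqref{LowerBoundML}. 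A minor remark on part~1: Lemma~\ref{LargeSieveMV} already packages the off-diagonal terms ($mn=\square$) into a single bound; there is no separate ``off-diagonal is $O(x^{1/2+o(1)})$'' step, and the moment being bounded is that of the \emph{non}-$y$-friable tail of the P\'olya series, not of a multiplicative Dirichlet polynomial supported on $y$-smooths.

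The genuine gap is in part~2. The paper obtains the constant $\eta=e^{-\gamma}\log 2$ in the exponent by invoking Theorem~\ref{BGGKOdd}: if the non-friable tail of the P\'olya series (controlled for almost all $p$ by Theorem~\ref{KeyResultPrimes}) is $\le 1$ and $|L(1,\chi)|>e^{\gamma}\log y-C$, then $m(\chi)>e^{-\gamma}\bigl(|L(1,\chi)|+\log 2\bigr)+O\bigl((\log\log y)^2/\sqrt{\log y}\bigr)$. This allows one to use the threshold $L\bigl(1,\psi_p\bigr)>e^{\gamma}\tau-\log 2+C_1(\log\tau)^2/\sqrt{\tau}=e^{\gamma}(\tau-\eta)+o(1)$, and then~\eqref{EstimateDistribL1psi} gives the density $\exp\bigl(-e^{\tau-\eta-B_0}/\tau(1+o(1))\bigr)$. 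Your proposed implication instead requires $L\bigl(1,\psi_p\bigr)\ge e^{\gamma}\bigl(\tau+O(\log\tau)\bigr)$, which is on the wrong side of $e^{\gamma}\tau$ and yields only $\exp\bigl(-e^{\tau-B_0}/\tau(1+o(1))\bigr)$, missing the factor $e^{-\eta}$ in the exponent. Moreover, the additional condition you impose -- that $\psi_p(\ell)$ agrees with $\chi_{-4}(\ell)$ for $\ell$ up to a slowly growing bound, in order to resonate near $\alpha_0\approx 1/4$ -- is not what the structure result does, and is in fact in tension with $L(1,\psi_p)$ being large (forcing $\psi_p$ to look like $\chi_{-4}$ on an initial segment of primes pushes $L(1,\psi_p)$ toward $L(1,\chi_{-4})=\pi/4$, not toward $e^{\gamma}\tau$). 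Since $\psi_p$ is odd, the resonance underlying~\eqref{LowerBoundML} and Theorem~\ref{BGGKOdd} is near $\alpha=1/2$, not $1/4$, and what Theorem~\ref{BGGKOdd} asks for is smallness of the non-friable tail (a generic condition, furnished by Theorem~\ref{KeyResultPrimes}), not a prescription of $\psi_p$ on small primes. Replacing your proposed implication by Theorem~\ref{BGGKOdd}, applied with the threshold $e^{\gamma}\tau-\log 2+O\bigl((\log\tau)^2/\sqrt{\tau}\bigr)$, closes the gap and recovers the paper's proof.
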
 
\begin{thm}\label{MainPositivePrimes} Let  $B_0$ be the constant in Theorem \ref{Main}. There exists positive constants $C_1, C_2$ and $C_3$ such that
\begin{itemize}
\item[1.] Uniformly in the range 
$2\leq \tau \leq (\log_2 x-\log_3 x- C_1)/\sqrt{3}$ we have
$$
\exp\left(-C_3\tau e^{\sqrt{3}\tau} \right)\leq \Psi_{x, 1}^{\textup{ prime }}(\tau)\ll \exp\left(-\frac{e^{\sqrt{3}\tau}}{ \tau^{C_2}}\right).
$$
\item[2.] Moreover, uniformly in the smaller range $2\leq \tau \leq \big((\log_2 x)/2-2 \log_3 x\big)/\sqrt{3}$ we have the improved lower bound
\begin{equation}\label{LowerBPositivePrimeDis}
\Psi_{x, 1}^{\textup{ prime }}(\tau) \geq \exp\left(-\frac{e^{\sqrt{3}\tau
-B_0}}{\sqrt{3}\tau} (1+o(1))\right).
\end{equation}
\end{itemize}
\end{thm}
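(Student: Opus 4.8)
The plan is to run the same machinery developed for Theorems \ref{Main} and \ref{MainPositive}, but in the arithmetic progression $p\equiv 1\bmod 4$ and with the appropriate large sieve input, namely Lemma \ref{LargeSieveMV} (the Montgomery--Vaughan quadratic large sieve over prime discriminants). First I would set up the upper bound. Since $p\equiv 1\bmod 4$ forces the Legendre symbol $(\tfrac{\cdot}{p})$ to be \emph{even}, the relevant P\'olya Fourier expansion \eqref{PolyaFourier} has the cosine shape, which is what produces the factor $\sqrt{3}$ in the exponent (the even-character constant $C_\chi=e^\gamma/(\sqrt3\pi)$ governs the threshold, exactly as in Theorem \ref{MainPositive}). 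So I would truncate the Fourier series, bound the tail in mean square over the family of prime discriminants $p\le x$ with $p\equiv1\bmod4$ using Lemma \ref{LargeSieveMV}, and reduce $m(\chi)>\tau$ to a lower bound on a short Euler-product-type sum $\sum_{n\le y}(\tfrac{n}{p})/n$ (equivalently, to $(\tfrac{\cdot}{p})$ pretending to be the trivial character on primes up to $y$, in the sense of Granville--Soundararajan's pretentious theory). Then a large-moment / Markov argument, optimizing the length $y$ and the moment, yields $\Psi_{x,1}^{\textup{ prime}}(\tau)\ll \exp(-e^{\sqrt3\tau}/\tau^{C_2})$. This is the literal transcription of the proof of the upper bound of Theorem \ref{MainPositive}, with the prime-discriminant large sieve replacing Heath-Brown/Elliott; the only thing to check is that the range restriction $\tau\le(\log_2 x-\log_3 x-C_1)/\sqrt3$ is exactly what is needed for the large-sieve error term to be absorbed.

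For the lower bounds I would invoke the distribution of $L(1,(\tfrac{\cdot}{p}))$ over primes $p\le x$ with $p\equiv1\bmod4$, which (as the excerpt announces in Subsection 1.2 and Section 2) gives, for $\xi$ up to essentially $\log_2 x$, a proportion of primes with $L(1,(\tfrac{\cdot}{p}))\ge e^\gamma\xi/\zeta(2)$-type size of order $\exp(-e^{\xi-B_0}/\xi\,(1+o(1)))$, the constant $B_0$ being precisely the one in Theorem \ref{Main}. The mechanism, following Granville--Soundararajan \cite{GrSo03,GrSo06} and its use in \cite{BGGK}, is: large $L(1,(\tfrac{\cdot}{p}))$ (equivalently $(\tfrac{\cdot}{p})$ pretending to be principal) forces a long initial stretch of the character to be $+1$-biased, and a Fourier/partial-summation argument then manufactures a character sum of size $\gg e^\gamma\sqrt p\log_2 p/(\sqrt3\pi)$, i.e. $m((\tfrac{\cdot}{p}))>\tau$ with $\sqrt3\tau\approx\xi$. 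Feeding the distribution of $L(1,\cdot)$ into this implication gives the sharper lower bound \eqref{LowerBPositivePrimeDis} in the smaller range $\tau\le((\log_2x)/2-2\log_3x)/\sqrt3$, where the count of such primes is still large enough to be detected unconditionally. For the wider range $\tau\le(\log_2x-\log_3x-C_1)/\sqrt3$ I would instead use the cruder but longer-reaching construction (as in part 3 of Theorem \ref{MainPrimes}), producing the weaker $\exp(-C_3\tau e^{\sqrt3\tau})$, which only needs a single prime with an exceptionally large $L(1,\cdot)$ rather than a positive-density family.

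The main obstacle, as in \cite{BGGK} and in Theorems \ref{Main}--\ref{MainPositive}, is controlling the off-diagonal contribution when estimating high moments of the Fourier tail: one must show that after truncating P\'olya's series at height $\approx p/y$, the mean square over $p\equiv1\bmod4$, $p\le x$ of the tail is genuinely of size $O(x)$ times the diagonal, with an error that the choice of parameters can kill. Here the advantage of the present approach is that Lemma \ref{LargeSieveMV} does exactly this uniformly, sidestepping the divisor-function estimates of \cite{BGGK}; the work is in checking that the error terms in that large sieve inequality (which degrade as the length of the Dirichlet polynomial grows) remain admissible throughout the stated $\tau$-range, and in verifying that the even-character normalization introduces the factor $\sqrt3$ consistently in both bounds so that the upper and lower exponents match up to the additive constant $B_0$. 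Everything else is a routine adaptation of the positive-discriminant case.
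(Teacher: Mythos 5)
Your proposal has the right scaffolding (truncate the P\'olya series, use Lemma~\ref{LargeSieveMV} to control the tail via Theorem~\ref{KeyResultPrimes}, then feed in the distribution of an appropriate $L(1,\cdot)$), but it misidentifies the $L$-function and the pretentious target, and this is not a cosmetic slip: the mechanism you describe fails for even characters. You write that the upper bound ``reduces $m(\chi)>\tau$ to a lower bound on $\sum_{n\le y}(\tfrac{n}{p})/n$, equivalently to $(\tfrac{\cdot}{p})$ pretending to be the trivial character,'' and for the lower bound that ``large $L(1,(\tfrac{\cdot}{p}))$ (equivalently $(\tfrac{\cdot}{p})$ pretending to be principal)\ldots manufactures'' a large character sum. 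For a primitive \emph{even} character (the case $p\equiv 1\bmod 4$), pretending to be principal does \emph{not} produce a large partial sum: the P\'olya expansion for an even $\psi$ gives $\sum_{n\le t}\psi(n)\approx \frac{\mathcal{G}(\psi)}{\pi i}\sum_n \psi(n)\sin(2\pi n t/q)/n$, and if $\psi(n)\approx 1$ on small $n$ the resulting series $\sum_n \sin(2\pi n\alpha)/n$ is uniformly bounded. The correct statement, which the paper uses, is that extreme $m(\psi)$ for even $\psi$ corresponds to $\psi$ pretending to be $\chi_{-3}$, and the controlling quantity is $|L(1,\psi\chi_{-3})|$, not $L(1,\psi)$.

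Concretely, the paper's proof rests on two inputs you did not invoke: (i) the even-character structure result Theorem~\ref{BGGKEven}, which under the truncation hypotheses gives $m(\psi)=\frac{e^{-\gamma}\sqrt{3}}{2}|L(1,\psi\chi_{-3})|+O(\log\log y)$; and (ii) the portion of Theorem~\ref{LamzouriL1chi} concerning the distribution of $L(1,(\tfrac{\cdot}{p})\chi_{-3})$ (this is exactly why that theorem is stated with the extra $\chi_{-3}$ variant). Part~2 and the lower bound of Part~1 follow immediately from the inequality $M(\psi)\ge\frac{\sqrt{3q}}{2\pi}|L(1,\psi\chi_{-3})|$ (\eqref{LowerBoundMLEven}) combined with Theorem~\ref{LamzouriL1chi}; the upper bound of Part~1 follows the proof of Theorem~\ref{MainPositive} verbatim (with $y=\tau^2 e^{\sqrt{3}\tau+C_1}$), replacing Theorems~\ref{KeyResult} and~\ref{GrSoThm} by Theorems~\ref{KeyResultPrimes} and~\ref{LamzouriL1chi}. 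Without the $\chi_{-3}$ twist, your argument proves nothing about $m((\tfrac{\cdot}{p}))$ in the even case, so the gap is a missing idea, not just imprecision. (Minor further errors: the relevant Fourier shape for even characters is the sine, not cosine, shape; and the threshold constant $2/3$ comes from $\phi(3)/3$, not from $\zeta(2)^{-1}$.)
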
 
\begin{rem}\label{PreciseLowerConditional}
An interesting consequence of Theorems \ref{MainPrimes} and \ref{MainPositivePrimes} is that almost all  primes $p\leq x$ for which $M\Big(\Big(\frac{\cdot}{p}\Big)\Big)$ is large are congruent to $3$ modulo $4$. Moreover, we note that conditionally on the GRH, the lower bound \eqref{LowerBPrimeDis} (respectively \eqref{LowerBPositivePrimeDis}) holds in the extended range $2\leq \tau \leq \log_2 x-2 \log_3 x-C_4$ (respectively $2\leq \tau \leq (\log_2 x-2 \log_3 x-C_4)/\sqrt{3}$), for some positive constant $C_4$. See Remark \ref{PreciseLowerConditional2} below for a justification of this fact. 
\end{rem}

\begin{rem}\label{LimitingDistributionPrime}
In Theorem 1.4 of \cite{BGGK}, Bober, Goldmakher, Granville and Koukoulopoulos establish the existence of a limiting distribution for $m(\chi)$, as $\chi$ varies over non-principal characters modulo a large prime $q$, when $q\to \infty$. In a recent joint work with Hussain \cite{HuLa}, we establish an analogous result for 
$M\Big(\Big(\frac{\cdot}{p}\Big)\Big)$ as $p$ varies over the primes in a large dyadic interval $[Q, 2Q]$ and $Q\to \infty$. 
\end{rem}

\subsection{The distribution of $L(1, (\frac{\cdot}{p}))$}
In order to prove the lower bounds of Theorems \ref{MainPrimes} and \ref{MainPositivePrimes} we need estimates on the distribution of $L(1, (\frac{\cdot}{p}))$, as $p$ varies over the primes. It turns out that this case is harder than the case of the bigger family of primitive quadratic characters attached to fundamental discriminants $d$, which was  investigated by Granville and Soundararajan in \cite{GrSo03} (see the precise statement of their result in Theorem \ref{GrSoThm} below). By the law of quadratic reciprocity, this is due to the fact that current bounds on character sums over primes are much weaker than analogous bounds for character sums over the integers.\footnote{Bounds for character sums over primes are ultimately connected to the distribution of primes in arithmetic progressions. The weakness of such bounds explains for example why the strongest version of the Prime Number Theorem for arithmetic progressions, namely the Siegel-Walfisz Theorem, only holds for very small moduli.}  In particular, such bounds are heavily affected by the possible existence of a Landau-Siegel exceptional discriminant. 

In \cite{Jo} Joshi  extended Littlewood's Omega result \cite{Li}, by establishing the existence of infinitely many primes $p$ such that 
$$ L(1, (\frac{\cdot}{p}))\geq (e^{\gamma}+o(1))\log\log p.$$
We improve on this result by obtaining estimates for the proportion of primes $p\leq x$ such that $L(1, (\frac{\cdot}{p}))>e^{\gamma}\tau$ uniformly for $\tau$ in the range $2\leq \tau\leq (1-o(1))\log\log p$, which is believed to be best possible up to the factor $o(\log\log p)$ (see for example the conjectures of Montgomery and Vaughan \cite{MV99} and Granville and Soundararajan \cite{GrSo03}). For $\tau>0$ and $a\in \{1, 3\}$ we define 
$$ 
F_{x, a}(\tau):= \frac{2}{\pi(x)} \left|\left\{p\leq x :  p\equiv a \bmod 4, \ L(1, (\frac{\cdot}{p}))>e^{\gamma} \tau\right\}
\right|.
$$
\begin{thm}\label{LamzouriL1chi}
Let $a\in \{1, 3\}$ and $x$ be large. In the range $2\leq \tau\leq (\log_2 x)/2-2\log_3 x$ we have 
\begin{equation}\label{EstimateDistribL1psi} 
F_{x, a}(\tau)= \exp\left(-\frac{e^{\tau-B_0}}{ \tau} \left(1+o(1)\right)\right).
\end{equation}
Moreover, there exists positive constants $C_1, C_2>0$ such that in the range $(\log_2 x)/2-2\log_3 x\leq \tau\leq \log_2 x-\log_3x-C_2$ we have 
\begin{equation}\label{EstimateDistribL1psi2} 
\exp\left(-C_1\tau e^{\tau} \right)\leq F_{x, a}(\tau) \leq \exp\left(-\frac{e^{\tau+\log 2-2}}{ \tau}\left(1+O\left(\frac{\log \tau}{\tau}\right)\right)\right). 
\end{equation}
Furthermore, the same estimates hold for the proportion of primes $p\leq x$ such that $p\equiv a \bmod 4$ and $L(1, (\frac{\cdot}{p})\chi_{-3})>(2e^{\gamma}/3)\tau$, where $\chi_{-3}$ is the non-principal character modulo $3$. 
\end{thm}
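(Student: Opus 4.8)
I would prove the upper and lower bounds separately, adapting to the family of Legendre symbols $\{(\tfrac{\cdot}{p})\}_{p\le x}$ the large-deviation analysis of $L(1,\chi)$ developed by Granville and Soundararajan for fundamental discriminants in \cite{GrSo03, GrSo06} (Theorem \ref{GrSoThm}). One cannot simply quote that result: although the primes $p\equiv 1\,(4)$ and $p\equiv 3\,(4)$ give rise respectively to the fundamental discriminants $p$ and $-p$, they form a density-zero subfamily, so the distribution of $L(1,(\tfrac{\cdot}{p}))$ has to be re-established; the point where the two settings genuinely differ is that the arithmetic input is now a bound for character sums \emph{over primes}, which is both weaker and sensitive to a possible Landau--Siegel exceptional character. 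The cases $a=1$ and $a=3$ are treated uniformly, the congruence on $p$ merely inserting a character modulo $4$.

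For the \emph{upper bound}, I would estimate the moments $\sum_{p\le x,\ p\equiv a\,(4)} L(1,(\tfrac{\cdot}{p}))^{k}$. After approximating $L(1,(\tfrac{\cdot}{p}))$ by a short Euler product $\prod_{q\le y}(1-(\tfrac{q}{p})/q)^{-1}$ (valid for the relevant primes, i.e. those whose $L$-function has no zero close to $s=1$, the exceptional ones being few and having $L(1,(\tfrac{\cdot}{p}))$ small), one expands the $k$-th power as a Dirichlet series $\sum_n b_n(\tfrac{n}{p})/n$ supported on $y$-smooth integers, truncates it by Rankin's trick, and applies the Montgomery--Vaughan large sieve for prime discriminants (Lemma \ref{LargeSieveMV}). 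The diagonal terms reproduce the random-Euler-product expectation $\prod_{q\le y}\tfrac12\big((1-1/q)^{-k}+(1+1/q)^{-k}\big)$, whose evaluation by Euler--Maclaurin (equivalently a saddle-point computation) produces the constant $B_0$ and the $\tanh$-integrals; the off-diagonal terms are controlled provided the truncation length stays below $x^{1-\ep}$. Markov's inequality with $k$ optimally chosen then gives the bounds — in the first range $k$ can be taken large enough for the optimisation to recover the sharp constant $e^{-B_0}$, whereas in the second range the length constraint forces a cruder choice and the constant $e^{\log 2-2}$.

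For the \emph{lower bound}, I would construct primes that force $L(1,(\tfrac{\cdot}{p}))$ to be large: fixing the optimal Granville--Soundararajan profile — a set $\mathcal S$ of primes, dictated by the variational problem that produces $B_0$ and containing all primes up to $z:=e^{\tau(1+o(1))}$, with $(\tfrac{q}{p})=+1$ for all $q\in\mathcal S$ — together with a second-moment bound (again via Lemma \ref{LargeSieveMV}) showing that for a typical such $p$ the unforced tail $\sum_{q>z}(\tfrac{q}{p})/q$ is negligible, guarantees $L(1,(\tfrac{\cdot}{p}))>e^{\gamma}\tau$. It then remains to count the primes $p\le x$, $p\equiv a\,(4)$, satisfying these congruences: expanding the indicator of $\{(\tfrac{q}{p})=+1:\ q\in\mathcal S\}$ over the squarefree divisors $m$ of $\prod_{q\in\mathcal S}q$ and using quadratic reciprocity, this count equals $2^{-|\mathcal S|}\cdot\tfrac12\pi(x)$ plus an error of size at most $\max_{m>1}\big|\sum_{p\le x,\ p\equiv a\,(4)}(\tfrac{m}{p})\big|$. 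Since the moduli involved have size $\exp(\Theta(e^{\tau}))$, each such character sum over primes is $\ll x\exp(-c\sqrt{\log x})$ by the classical zero-free region; balancing this against the number $2^{|\mathcal S|}=\exp(\Theta(e^{\tau}/\tau))$ of divisors forces $e^{\tau}/\tau \ll\sqrt{\log x}$, i.e. $\tau\le(\log_2 x)/2+O(\log_3 x)$ — precisely the first range — while under GRH the per-modulus bound improves to $\sqrt x\,(\log x)^{O(1)}$ and the range extends to $\tau\le\log_2 x+O(\log_3 x)$, as in Remark \ref{PreciseLowerConditional}. Moduli whose character has a Siegel zero are removed using Siegel's (ineffective) lower bound for $L(1,\cdot)$. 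Once $\tau$ leaves the first range one forces $(\tfrac{q}{p})=+1$ only over a shorter truncation and combines this with moment estimates for a short Euler product, still obtaining the weaker bound $\exp(-C_1\tau e^{\tau})$. The statement for $L(1,(\tfrac{\cdot}{p})\chi_{-3})$ follows from exactly the same argument: twisting by the fixed character $\chi_{-3}$ leaves Lemma \ref{LargeSieveMV} untouched, and the only change is the absence of the Euler factor at $q=3$, which is precisely what replaces $e^{\gamma}$ by $2e^{\gamma}/3$ while leaving $B_0$ and the admissible ranges unchanged.

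The main obstacle — and the reason this case is harder than the fundamental-discriminant case of \cite{GrSo03} — is the lower bound: one must control character sums over primes to moduli $\prod_{q\in\mathcal S}q$ of size $\exp(\Theta(e^{\tau}))$, equivalently the distribution of primes in the corresponding arithmetic progressions, for which the only unconditional tool is the classical zero-free region. Carefully balancing the number $\exp(\Theta(e^{\tau}/\tau))$ of error terms against their individual size is what both caps the first range at $(\log_2 x)/2$ and, through the appeal to Siegel's theorem for a possible exceptional modulus, makes the result ineffective.
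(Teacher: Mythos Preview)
Your upper-bound strategy (moments via large sieve plus Markov) is close in spirit to what the paper does, though the paper packages it differently: in the first range it quotes the moment asymptotics of \cite{La17} together with the saddle-point conversion of \cite{La10}, and in the second range it obtains the constant $e^{\log 2-2}$ not from a direct moment bound on $L(1,\psi_p)$ but from Theorem~\ref{KeyResultPrimes}, the paper's new estimate on the non-friable tail of the P\'olya series, applied with $\alpha=0$ (see \eqref{UpperL1psiTrunc}). Your ``short Euler product + zero-free-region truncation'' would work for the first-range upper bound, but it is not clear it recovers the specific constant $\log 2-2$ in the second range.

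There are, however, two genuine gaps in your lower-bound argument.

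\textbf{First range.} Forcing $(\tfrac{q}{p})=+1$ on a set $\mathcal S$ of primes and counting via Siegel--Walfisz gives at best
\[
F_{x,a}(\tau)\;\ge\;2^{-|\mathcal S|}\bigl(1+o(1)\bigr)\;=\;\exp\!\Bigl(-(\log 2)\,\frac{e^{\tau}}{\tau}(1+o(1))\Bigr),
\]
since making $L(1,\psi_p)>e^{\gamma}\tau$ forces $\mathcal S$ to contain essentially all primes up to $e^{\tau}$ (any other choice of $\mathcal S$ with the same cardinality does worse). But $\log 2\approx 0.693>e^{-B_0}\approx 0.441$, so this lower bound is strictly \emph{weaker} than \eqref{EstimateDistribL1psi}. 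There is no deterministic set $\mathcal S$ whose ``variational problem'' produces $B_0$; that constant comes from the \emph{tilted} measure $L(1,\X)^k\,d\mathbb P$, i.e.\ from two-sided moment asymptotics. The paper obtains these from \cite{La17}: the asymptotic \eqref{MomentsL1chiPrimes} carries a secondary term $E_1$ from a possible Siegel discriminant $d_1$, and the key step \eqref{BoundErrorSiegel}--\eqref{BoundErrorSiegel2} uses Siegel's theorem to force $d_1$ to have a large prime factor, which makes $|E_1|<(1-\exp(-\varepsilon k/\log k))\,\ex(L(1,\X)^k)$ and hence leaves the leading term of the moment intact. This is what produces $B_0$ with the correct (ineffective) constant; a pure construction cannot.

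\textbf{Second range.} Here $\tau$ can be as large as $\log_2 x-\log_3 x-C$, so forcing $(\tfrac{q}{p})=+1$ for $q\le z$ with $\log z\asymp\tau$ means moduli $Q=\prod_{q\le z}q$ of size up to $x^{c}$. The classical zero-free region (or Siegel--Walfisz) gives nothing for such moduli; your ``shorter truncation plus moments'' does not bridge the gap, since once the forced part stops at $z_0\le\exp(\sqrt{\log x})$ the remaining Euler product over $(z_0,e^{\tau}]$ is still $\asymp e^{\gamma}(\tau-\log z_0)$ away from the target and is not controllable by second moments. The paper instead invokes Bombieri's quantitative form of Linnik's theorem (Lemma~\ref{SignsLegendreLong}), which for $z\le c_0\log x$ gives $\gg\pi(x)\exp(-3z)$ primes with the prescribed values, and then uses Theorem~\ref{KeyResultPrimes} to control the tail over $P^{+}(n)>y_1$ with $y_1\asymp z\log z$. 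Choosing $z\asymp\tau e^{\tau}$ yields the claimed $\exp(-C_1\tau e^{\tau})$. Without Linnik-type input you cannot reach this range.
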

\begin{rem}
 One can compare our results with those of Granville and Soundararajan \cite{GrSo03} for the distribution of $L(1, \chi_d)$ over fundamental discriminants $|d|\leq x$ (see Theorem \ref{GrSoThm} below). In this case, the same estimate \eqref{EstimateDistribL1psi} holds for the proportion of fundamental discriminants $|d|\leq x$ such that $L(1, \chi_d)>e^{\gamma} \tau$, uniformly for $\tau$ in the larger  range $2\leq \tau\leq \log_2 x + (1-o(1)) \log_4 x $. We should also note that conditionally on GRH, the estimate \eqref{EstimateDistribL1psi} holds in the extended range $2\leq \tau \leq \log_2 x-2 \log_3 x-C_2$ (for some positive constant $C_2$) by a result of   Holmin,  Jones, Kurlberg,  McLeman and Petersen \cite{HJKMP} (see Remark \ref{PreciseLowerConditional2} below). Finally, we note that in the case $a=3$, our proof yields a better constant in the upper bound of \eqref{EstimateDistribL1psi2} than if we just apply Theorem \ref{MainPrimes} directly (using the inequality \eqref{LowerBoundML} below). 
\end{rem}

To prove the precise estimate \eqref{EstimateDistribL1psi} in the smaller range $2\leq \tau\leq (1/2-o(1))\log_2x$, we use our previous work \cite{La17}, where we established asymptotic formulas for the complex moments of $L(1, (\frac{\cdot}{p}))$ involving a secondary term which is coming from a possible Landau-Siegel exceptional discriminant.
Although we could not rule out that this term might be as large as the main term, we were able to show that it does not affect the leading term in the tail of the distribution of $L(1, (\frac{\cdot}{p}))$. However, we note that the error term $o(1)$ inside the estimate \eqref{EstimateDistribL1psi} is not effective, due to the use of Siegel's Theorem. The upper bound of \eqref{EstimateDistribL1psi2} will be a consequence of Theorem \ref{KeyResultPrimes} below, which is the key ingredient in the proofs of the upper bounds of Theorems \ref{MainPrimes} and \ref{MainPositivePrimes}. Finally, to obtain the lower bound of \eqref{EstimateDistribL1psi2}, we combine Theorem \ref{KeyResultPrimes} with a strong form of Linnik's Theorem established by Bombieri \cite{Bo} using his zero density estimates for Dirichlet $L$-functions. 

\subsection{An application to a question of Montgomery} 

As an application of our results we consider the problem of the positivity of sums of the Legendre symbol. Let $p\ge 3$ be a prime and 
$$ S_p(t)=\sum_{n\leq t} \left(\frac{n}{p}\right). $$
The question of determining when  $S_p(t)$ is positive was considered by several mathematicians including Fekete, Chowla, Montgomery and others. Since $S_p(t)$ is periodic of period $p$, one can renormalize the variable $t$ and define for $\alpha \geq 0$ the function
$$ f_p(\alpha):= \sum_{0\leq n\leq \alpha p} \left(\frac{n}{p}\right), $$
which is periodic of period $1$. One can extend $f_p$ to all $\alpha \in \mathbb{R}$ by periodicity. 
In \cite{Mo} Montgomery studied the following natural question: How frequently is $f_p(\alpha)$ positive for a prime $p\equiv 3 \bmod 4$?  More precisely, he investigated the quantity
$$ 
\lambda(p):=\mu\left(\left\{\alpha \in [0, 1) : f_p(\alpha)>0\right\}\right),
$$
for such primes\footnote{Note that $f_p$ is even if $p\equiv 3 \bmod 4$ and is odd if $p\equiv 1 \bmod 4$. The analogous question in the latter case becomes: how often is $f_p(\alpha)>0$ for $0<\alpha<1/2$? We shall only consider the former case in this paper since our methods can be extended to handle the case of primes $p\equiv 1 \bmod 4$. }, where $\mu(\mathcal{C})$ denotes the Lebesgue measure of a measurable set $\mathcal{C}$. 
He proved in \cite{Mo} that for all primes $p\equiv 3 \bmod 4$ we have $\lambda(p)>1/50$. He also established the existence of infinitely many such primes such that $\lambda(p)<1/3+\varepsilon$ for any fixed $\varepsilon>0$. In her Master's thesis \cite{Me}, Mehkari slightly improved the constant $1/50$ in the first result of Montgomery. She also run extensive numerical computations which suggest that the value $1/3$ is his second result is optimal. Furthermore, she proved conditionally on GRH that $\lambda(p)\leq 0.764$ for a positive proportion of the primes $p\equiv 3 \bmod 4$, and  that $\lambda(p)\geq 0.285$ for a positive proportion of the primes $p\equiv 3 \bmod 4$. 
Montgomery also writes in \cite{Mo}  ``the ideas found in our proof can also be used to show that there are infinitely many primes $p\equiv 3 \bmod 4$ such that $\lambda(p)>1-\varepsilon$." Using a different method, based on the proof of Theorem \ref{MainPrimes}, we improve on these results by showing that for any $\varepsilon>0$, both inequalities $\lambda(p)>1-\varepsilon$ and $\lambda(p)<1/3+\varepsilon$ hold for a positive proportion of the primes $p\equiv 3 \bmod 4$. We also quantify these proportions in terms of $\varepsilon$ and consider the question of uniformity by letting $\varepsilon \to 0$ slowly as  a function of $x$, if we vary over the primes $p\leq x$ such that $p\equiv 3\bmod 4$. 
\begin{thm}\label{PositivityLegendreTheorem} Let $\nu>0$ be a small fixed constant. Let $x$ be large and $1<T \leq \exp\big((1-\nu)\log_2 x\log_3 x/(\log_4 x)\big)$ be a real number. The number of primes $p\leq x$ with $p\equiv 3\bmod 4$ and such that $\lambda(p)>1-1/T$ is
\begin{equation}\label{PreciseLargeLambda}
\gg \pi(x)\exp\left(-\exp\left(\frac{\log T\log_3 T}{\log_2T}(1+o(1))\right)\right).
\end{equation}
In particular,  this quantity is 
$$\gg_{\nu} \pi(x)\exp\left(-T^{\nu}\right).$$
\end{thm}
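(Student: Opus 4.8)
Since $p\equiv 3\bmod 4$, the character $\chi_p=\left(\frac{\cdot}{p}\right)$ is odd and has Gauss sum $i\sqrt p$, so P\'olya's Fourier expansion yields, uniformly for $\alpha\in[0,1)$,
$$\frac{f_p(\alpha)}{\sqrt p}=\frac1\pi\Big(L(1,\chi_p)-G_p(\alpha)\Big)+O\Big(\frac{\log p}{\sqrt p}\Big),\qquad G_p(\alpha):=\sum_{n\le p}\frac{\chi_p(n)\cos(2\pi n\alpha)}{n}.$$
Thus, writing $E_p:=\{\alpha\in[0,1):\ G_p(\alpha)\ge L(1,\chi_p)-o(1)\}$, we have $\lambda(p)>1-1/T$ as soon as $\mu(E_p)<1/T$. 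The plan is to fix a threshold $V=V(T,x)$ and show that $\mu(E_p)<1/T$ for all but a negligible proportion of the primes $p\le x$ with $p\equiv3\bmod4$ and $L(1,\chi_p)>e^\gamma V$; Theorem~\ref{LamzouriL1chi} then counts these primes, and optimising $V$ against $1/T$ produces \eqref{PreciseLargeLambda}. The heart of the matter is to locate and measure $E_p$.

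First I would show that, once $L(1,\chi_p)$ is large, $E_p$ is confined to tiny neighbourhoods of $\alpha=0$ and $\alpha=1$. On a major arc about a rational $a/q$ with $q$ bounded, $G_p(\alpha)$ is a linear combination of the values $\operatorname{Re}L(1,\chi_p\psi)$, $\psi\bmod q$, in which the coefficient of the principal term has absolute value strictly less than $1$, so $G_p$ stays below $L(1,\chi_p)$ there. On the minor arcs I would split $G_p$ into the part carried by $e^V$-smooth $n$ and the rest: the smooth part $C(\alpha)$ is bounded by $C(0)$ minus the tail $\sum_{m>\|\alpha\|^{-1},\,P^{+}(m)\le e^{V}}1/m$ (with $P^{+}$ the largest prime factor and $\|\alpha\|=\operatorname{dist}(\alpha,\mathbb Z)$), which is $\gg1$ away from $0$; and the rough part is small in mean square, because $\chi_p$ pretends to be $1$ and so the $e^{V}$-rough integers carry little weight. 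Near $\alpha=0$ (symmetrically near $1$) one has instead $G_p(\alpha)=L(1,\chi_p)-\sum_{n>\|\alpha\|^{-1}}\chi_p(n)/n+o(1)$, so that $\alpha\in E_p$ forces $\sum_{n>\|\alpha\|^{-1}}\chi_p(n)/n\le o(1)$. I expect this minor-arc step to be the main obstacle: one must avoid losing a multiplicative constant in the exponent, which rules out a direct moment bound for $G_p$ (the estimate $\int_0^1G_p(\alpha)^{2k}\,d\alpha\ll(2k)!$ only gives $\mu(E_p)\ll\sqrt{L(1,\chi_p)}\,e^{-L(1,\chi_p)}$, which would demand $L(1,\chi_p)\gg\log T$ — impossible for $T$ near the top of its range) and forces the smooth/rough decomposition above.

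It remains to quantify the tail condition. For the primes supplied by Theorem~\ref{LamzouriL1chi}, the pretentious theory of Granville--Soundararajan — equivalently, the structure results for large quadratic character sums quoted in the Remarks following Theorem~\ref{MainPositive} — shows $\chi_p$ pretends to be principal up to $e^{V}$; the quadratic large sieve discards the $o(1)$-proportion of these primes whose Euler tail $\prod_{\ell>e^{V}}(1-\chi_p(\ell)/\ell)^{-1}$ is abnormally small, and for the rest $L(1,\chi_p)$ differs from $\prod_{\ell\le e^{V}}(1-1/\ell)^{-1}$ by $o(1)$ while $\sum_{n\le N}\chi_p(n)/n$ reaches $L(1,\chi_p)$ no faster than the $e^{V}$-smooth partial sums of $\sum 1/n$. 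The elementary estimate $\sum_{m>N,\,P^{+}(m)\le y}1/m\asymp\log y\cdot\rho\!\big(\tfrac{\log N}{\log y}\big)\big/\log\!\big(\tfrac{\log N}{\log y}\big)$ (with $\rho$ the Dickman function) then forces $\sum_{n>N}\chi_p(n)/n\le o(1)$ only for $N\ge\exp\!\big((1+o(1))\tfrac{V\log V}{\log_2 V}\big)$, whence $\mu(E_p)\le\exp\!\big(-(1+o(1))\tfrac{V\log V}{\log_2 V}\big)$. This is $<1/T$ once $\tfrac{V\log V}{\log_2 V}\ge(1+\delta)\log T$, and the least admissible $V$ is $V=(1+o(1))\tfrac{\log T\log_3 T}{\log_2 T}$. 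The hypothesis $1<T\le\exp\!\big((1-\nu)\log_2 x\log_3 x/\log_4 x\big)$ is exactly what keeps this $V$ in the range of $\tau$ covered by Theorem~\ref{LamzouriL1chi}, so the number of primes $p\le x$, $p\equiv3\bmod4$, with $\lambda(p)>1-1/T$ is
$$\gg\pi(x)\exp\!\Big(-\tfrac{e^{V-B_0}}{V}(1+o(1))\Big)=\pi(x)\exp\!\Big(-\exp\!\Big(\tfrac{\log T\log_3 T}{\log_2 T}(1+o(1))\Big)\Big),$$
which is \eqref{PreciseLargeLambda}; and since $\log_3 T/\log_2 T\to0$ one has $\tfrac{\log T\log_3T}{\log_2T}\le\nu\log T$ for $T$ large, so this is $\gg_\nu\pi(x)\exp(-T^{\nu})$.
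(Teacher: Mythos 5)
Your broad architecture (Pólya Fourier expansion, smooth/rough decomposition, Dickman-function measure of the bad set, final exponent $\exp(\log T\log_3 T/\log_2 T)$) matches the paper's, but you take a genuinely different entry point, and that is where the gap lies. The paper never invokes Theorem~\ref{LamzouriL1chi} in this proof. Instead it constructs the good set of primes directly: by Lemma~\ref{SignsLegendreLong} (Bombieri's form of Linnik's theorem) one has $\gg\pi(x)\exp(-3y_0(1+o(1)))$ primes $p\equiv 3\bmod 4$ with $\psi_p(q)=1$ for \emph{every} prime $q\le y_0$, removes the $\ll\pi(x)\exp(-y/(3\log y))$ exceptions from Theorem~\ref{KeyResultPrimes}, and then on that set replaces the $\psi_p$-weighted $y$-smooth Fourier sum by the unweighted one (paying $O(\log_2 y)$) before applying the Dickman estimate via Lemmas 3.3--3.4 of BGGK.

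Your route through primes with $L(1,\chi_p)>e^\gamma V$ does not cleanly yield the pointwise control you need. Pretentious structure for such primes only says the logarithmic weight of $\{q\le e^V:\psi_p(q)=-1\}$ is small; it does not rule out $\psi_p(q)=-1$ for a bounded number of small primes $q$ (and indeed a positive proportion of the primes contributing to Theorem~\ref{LamzouriL1chi} at level $\tau$ will have $\psi_p(3)=-1$, say, compensated by more $+1$'s further out). Once even one small sign flips, your key inequality $C(\alpha)\le C(0)-\sum_{m>\|\alpha\|^{-1},\,P^+(m)\le e^V}1/m$ fails: the bound $|C(\alpha)|\le\sum_{n\le\|\alpha\|^{-1},\,P^+(n)\le e^V}1/n+O(1)$ from Lemma 3.4 of BGGK is sign-insensitive and unchanged, while $C(0)=\prod_{q\le e^V}(1-\psi_p(q)/q)^{-1}$ drops by a bounded factor (e.g.\ by $1/2$ if $\psi_p(3)=-1$), so $C(\alpha)$ can exceed $C(0)$ off the major arcs and the set $E_p$ need not be small. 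To repair this you would in effect have to show that, among the primes supplied by Theorem~\ref{LamzouriL1chi}, only a negligible fraction have any small sign $-1$ --- which is essentially re-deriving the prescribed-sign count that Lemma~\ref{SignsLegendreLong} gives directly and more cheaply. You flag the minor-arc step as the main obstacle, and that instinct is right; but the obstacle is not only cancellation on minor arcs, it is that $L(1,\chi_p)$ large plus a controlled Euler tail does not force $\psi_p(q)=1$ pointwise on small $q$, which your smooth-part estimate quietly requires.
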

\begin{thm}\label{NegativityLegendreTheorem} Let $x$ be large and $T>1$. There exists positive constants $c_1, c_2, c_3$ such that the number of primes $p\leq x$ with $p\equiv 3\bmod 4$ and such that $\lambda(p)<1/3+1/T$ is at least 
\begin{equation}\label{PreciseSmallLambda}
\pi(x)\exp\left(-c_1T^2(\log T)^3\right),
\end{equation}
if $1<T \leq c_2 (\log_2 x)^{1/2}/(\log_3 x)^{2}$, and is at least \begin{equation}\label{SmallLambda}
\pi(x)\exp\left(-c_1T^2 (\log T)^5\right),
\end{equation}
if $c_2 (\log_2 x)^{1/2}/(\log_3 x)^{2}\leq T\leq c_3 (\log x)^{1/2}/(\log_2 x)^{5/2}$. 
\end{thm}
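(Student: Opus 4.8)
\medskip
\noindent\emph{Sketch of the approach.} The plan is to run the machinery behind Theorem~\ref{MainPrimes} in reverse: rather than exhibiting a large value of $|f_p|$, we prescribe the Legendre symbol $\chi_p:=\bigl(\tfrac{\cdot}{p}\bigr)$ on an initial segment of primes so as to force $f_p$ to have a prescribed sign on $[0,1)$, and then count the primes realising that prescription. First I would use the P\'olya--Fourier expansion: for $p\equiv 3\bmod 4$ the character $\chi_p$ is odd with $\tau(\chi_p)=i\sqrt p$, so
\[
\frac{f_p(\alpha)}{\sqrt p}=\frac1\pi\sum_{n\ge1}\frac{\chi_p(n)}{n}\bigl(1-\cos 2\pi n\alpha\bigr)+O\!\Bigl(\frac{\log p}{\sqrt p}\Bigr).
\]
Fixing a parameter $z=z(T)$, splitting the sum at $z$ and pulling out the $\alpha$-independent part $\tfrac1\pi\bigl(L(1,\chi_p)-\sum_{n\le z}\tfrac{\chi_p(n)}{n}\bigr)$ of the tail, a Parseval bound gives that the remaining tail has $L^2([0,1))$-norm $\ll z^{-1/2}$, uniformly in $p$, so that
\[
\frac{f_p(\alpha)}{\sqrt p}=\frac1\pi L(1,\chi_p)-\frac1\pi\sum_{n\le z}\frac{\chi_p(n)}{n}\cos 2\pi n\alpha+O_{L^2}\!\bigl(z^{-1/2}\bigr),
\]
a quantity depending on $p$ only through $L(1,\chi_p)$ and the residues $\chi_p(n)$, $n\le z$. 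This is exactly the point where the quadratic large sieve underlying Theorem~\ref{MainPrimes} is used.

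\smallskip
\noindent\emph{The extremal configuration.} Next I would impose that $p\equiv 19\bmod 24$ (equivalently $\chi_p(2)=\chi_p(3)=-1$) and that $\chi_p(\ell)=\chi_{-3}(\ell)=\bigl(\tfrac{\ell}{3}\bigr)$ for every odd prime $5\le\ell\le z$. Then $\chi_p(n)=\psi(n):=(-1)^{v_3(n)}\chi_{-3}\!\bigl(n/3^{v_3(n)}\bigr)$ for all $n\le z$, and using the classical identity $\sum_n\tfrac{\chi_{-3}(n)}{n}\bigl(1-\cos 2\pi n\beta\bigr)=\tfrac{\pi}{\sqrt3}\,\mathbf 1_{[1/3,2/3)}(\beta)$ together with multiplicativity at the prime $3$, one obtains, again up to $O_{L^2}(z^{-1/2})$,
\[
\frac{f_p(\alpha)}{\sqrt p}=\frac1\pi L(1,\chi_p)-\frac1{4\sqrt3}+G(\alpha),\qquad G(\alpha):=\frac1{\sqrt3}\sum_{a\ge0}\Bigl(-\tfrac13\Bigr)^{a}\mathbf 1_{[1/3,2/3)}\!\bigl(3^{a}\alpha\bigr).
\]
The self-similarity $G(\alpha)=\tfrac1{\sqrt3}\mathbf 1_{[1/3,2/3)}(\alpha)-\tfrac13 G(3\alpha)$ gives at once $\inf_{[1/3,2/3)}G=\tfrac5{8\sqrt3}$ and $\sup_{[0,1)\setminus[1/3,2/3)}G=\tfrac1{8\sqrt3}$. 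Hence if $p$ \emph{also} satisfies $L(1,\chi_p)\le\tfrac{\pi}{8\sqrt3}-\pi\delta$ for a fixed small $\delta>0$, then the right-hand side above is $\ge\tfrac3{8\sqrt3}$ on $[1/3,2/3)$ and $\le-\delta$ off it; combining this with the first step and Chebyshev's inequality, $f_p>0$ on $[1/3,2/3)$ and $f_p<0$ on $[0,1)\setminus[1/3,2/3)$ apart from a set of measure $\ll z^{-1}\delta^{-2}$. Therefore $\lambda(p)=\tfrac13+O(z^{-1}\delta^{-2})$, which is $<\tfrac13+\tfrac1T$ once $z$ is chosen to be a suitable power of $T$ (with $\delta$ an absolute constant).

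\smallskip
\noindent\emph{Counting the primes.} It then remains to count primes $p\le x$ with $p\equiv 19\bmod 24$, $\chi_p(\ell)=\chi_{-3}(\ell)$ for $5\le\ell\le z$, and $L(1,\chi_p)$ below the fixed threshold above. The last condition can be forced by prescribing in addition $\chi_p(\ell)=-1$ for the primes $\ell$ in a range $(z,w]$ with $w$ a suitable power of $z$ a little above $z^{2}$ (so that $\prod_{z<\ell\le w}(1+1/\ell)^{-1}\sim\log z/\log w<\tfrac12$), which confines $p$ to an explicit union of residue classes modulo $q=24\prod_{\ell\le w}\ell=e^{(1+o(1))w}$. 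For $T$ in the smaller range, where $w$ is small relative to $x$, the count of such $p$ follows from the moment and distribution inputs behind Theorem~\ref{LamzouriL1chi}, applied simultaneously to $L(1,\chi_p)$ and $L(1,\chi_p\chi_{-3})$ (cf.\ \eqref{EstimateDistribL1psi}), and yields the lower bound \eqref{PreciseSmallLambda}; for $T$ in the larger range, where $w$ exceeds the admissible size for those results, one instead invokes the strong form of Linnik's theorem due to Bombieri (the same zero-density input used for the lower bound in \eqref{EstimateDistribL1psi2}), which gives $\gg\pi(x)/\varphi(q)$ primes in each relevant class once $x\ge q^{L}$, i.e.\ $w\ll\log x$, whence the slightly weaker exponent \eqref{SmallLambda}. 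The cut-off $T\le c_2(\log_2 x)^{1/2}/(\log_3 x)^{2}$, the upper bound $T\le c_3(\log x)^{1/2}/(\log_2 x)^{5/2}$, and the polylogarithmic powers in \eqref{PreciseSmallLambda}--\eqref{SmallLambda} all come out of balancing $z$, $w$, $\delta$ against the admissible ranges of these inputs.

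\smallskip
\noindent\emph{Main obstacle.} The hard part is the counting step: one must produce primes for which $\chi_p$ simultaneously pretends to be $\chi_{-3}$ on a long initial segment of primes \emph{and} has an atypically small value of $L(1,\chi_p)$ --- two conditions that are mildly anticorrelated --- with enough uniformity to let $T\to\infty$ with $x$. This is precisely why the paper establishes the distribution of \emph{both} $L\bigl(1,(\tfrac{\cdot}{p})\bigr)$ and $L\bigl(1,(\tfrac{\cdot}{p})\chi_{-3}\bigr)$ in Theorem~\ref{LamzouriL1chi}, and why a Linnik-type equidistribution statement is needed at the end. A secondary technical point is keeping the $L^{2}$-errors of the first two steps uniform in $p$ via the quadratic large sieve, so that the exceptional set on which $f_p$ carries the ``wrong'' sign genuinely has measure $o(1/T)$.
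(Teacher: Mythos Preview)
Your overall architecture is close to the paper's: P\'olya--Fourier, Montgomery's extremal pretender $h$ (with $h(q)=\chi_{-3}(q)$ for $q\neq 3$ and $h(3)=-1$), a Parseval/$L^2$ comparison of the truncated cosine sum with the ideal profile, and a count of primes realising the prescription via Siegel--Walfisz in the small range and Bombieri's Linnik in the large range. The paper does exactly this.

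There is, however, a genuine gap in your ``forcing $L(1,\chi_p)$ small'' step. Prescribing $\chi_p(\ell)$ for $\ell\le w$ only pins down the $w$-truncated Euler product $\prod_{\ell\le w}(1-\chi_p(\ell)/\ell)^{-1}$; the tail $\prod_{\ell>w}(1-\chi_p(\ell)/\ell)^{-1}$ is uncontrolled and could in principle be as large as $e^{\gamma}\log w'/\log w$ for some $w'$ up to a power of $\log p$, which would destroy the inequality $L(1,\chi_p)\le \pi/(8\sqrt3)-\pi\delta$. The Parseval bound you invoke handles the $L^2$-norm of the \emph{oscillating} tail $\sum_{n>z}\tfrac{\chi_p(n)}{n}\cos 2\pi n\alpha$, but this is orthogonal to the constant-in-$\alpha$ term that becomes $L(1,\chi_p)$; it says nothing about the latter. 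Your remark that ``the quadratic large sieve'' keeps the $L^2$-errors uniform in $p$ misplaces the role of the sieve: the Parseval error is already uniform in $p$ for free.

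The paper closes this gap differently and more cleanly: it does not separate out the full $L(1,\chi_p)$ at all. Instead it applies Theorem~\ref{KeyResultPrimes} (the large-sieve input) to bound $\max_{\alpha}\bigl|\sum_{n\le Z,\,P^+(n)>y}\tfrac{\psi_p(n)(1-\cos 2\pi n\alpha)}{n}\bigr|$ pointwise in $\alpha$ (not merely in $L^2$), for all $p$ outside a negligible exceptional set. This replaces $F(\alpha,p)$ by its $y$-smooth truncation up to $O(1/\log y)$, so that only the \emph{finite} Euler product $\prod_{q\le y}(1-\psi_p(q)/q)^{-1}$ enters, and this quantity \emph{is} determined by the prescription. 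The Parseval step is then used only for the second, easy comparison: the $y$-smooth cosine sum versus Montgomery's $U(\alpha)$. Finally, the count follows directly from Lemma~\ref{SignsLegendre} (Siegel--Walfisz) or Lemma~\ref{SignsLegendreLong} (Bombieri--Linnik) applied to the residue class; there is no need to invoke the distribution of $L(1,\chi_p)$ from Theorem~\ref{LamzouriL1chi}.
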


\subsection{Further applications} We should note that our method could be adapted to investigate the distribution of $M(\chi)$ over the family of primitive cubic Dirichlet characters with conductor up to $x$. We are planning to pursue this direction in a future paper. Moreover, we mention that the ideas of the proof of Theorem \ref{MainPrimes} are used in a different work of the author, joint with Hussain \cite{HuLa}, concerning the limiting distribution of character paths attached to the family of Legendre symbols modulo primes. Hussain \cite{Hu} previously established a similar result for character paths attached to non-principal characters modulo a large prime $q$. Furthermore, in her PhD thesis \cite{HuT}, she proved conditionally on GRH that character paths attached to the family of Legendre symbols converge in law (in the space of continuous functions) to a random Fourier series constructed using Rademacher random multiplicative functions. Our forthcoming work will establish this result unconditionally.

\subsection{Acknowledgments} The author would like to thank Andrew Granville for useful discussions. We would also like to thank the anonymous referee for carefully reading the paper and for their comments and suggestions. This work was completed while the author was on a D\'el\'egation CNRS  at the IRL3457 CRM-CNRS in Montr\'eal. We would like to thank the CNRS for its support and the Centre de Recherches Math\'ematiques for its excellent working conditions. 


\section{Outline and key ingredients of the proofs of Theorems \ref{Main}, \ref{MainPositive} and  \ref{MainPrimes}}

\subsection{The overall strategy of the proof of Theorem \ref{Main}}

We first start by describing the strategy and key ideas of the proof of Theorem \ref{Main}, since the method for proving the other results Theorems \ref{MainPositive}, \ref{MainPrimes} and \ref{MainPositivePrimes} is similar. In particular, it will be useful to compare the argument with \cite{BGGK}. The character sum $\sum_{n\leq t} \chi_d(n)$ has a simple Fourier expansion first obtained by P\'olya in  the following quantitative form (eq. (9.19), p.311 of \cite{MVBook}) 
\begin{equation}\label{PolyaFourier}
\sum_{n\leq t} \chi_d(n)= \frac{\mathcal{G}(\chi_d)}{2\pi i}\sum_{1\leq |n|\leq Z} \frac{\chi_d(n) (1-e(-nt/|d|))}{n} +O\left(1+ \frac{|d|\log|d|}{Z}\right),
\end{equation}
where $\mathcal{G}(\chi_d)$ is the Gauss sum attached to $\chi_d$. Let $\delta:=1/100$, and define $\F$ to be the set of fundamental discriminants $x^{1-\delta}\leq |d|\leq x$. We note that the proportion of those discriminants $d$ with $|d|\leq x^{1-\delta}$ is $\ll x^{-\delta}$, which is much smaller than the distribution function $\Psi^-_x(\tau)$ in the range $\tau\leq \log\log x$. Hence, we will only focus on the fundamental discriminants $d\in \F$. 
For these discriminants we have
\begin{equation}\label{Polya}
m(\chi_d)= \frac{e^{-\gamma}}{2} \max_{\alpha \in [0, 1)} \left|\sum_{1\leq |n|\leq Z} \frac{\chi_d(n) (1-e(\alpha n))}{n}\right| + O\left(x^{-\delta}\right),
\end{equation}
where $Z=x^{21/40}$. 
We first describe a heuristic argument that will help us isolate the key ingredient in the proof of Theorem \ref{Main}. This will also explain why we should expect the tail of the distribution of $m(\chi_d)$ to behave like $\exp(-e^{\tau}/\tau)$. A standard idea, which goes back to the work of Montgomery and Vaughan \cite{MV77} on exponential sums with multiplicative coefficients, is to split the sum on the right hand side of \eqref{Polya} into two parts
\begin{equation}\label{SeriesPolya}\sum_{1\leq |n|\leq Z} \frac{\chi_d(n) (1-e(\alpha n))}{n}= \sum_{\substack{1\leq |n|\leq Z\\ P^{+}(n)\leq y}} \frac{\chi_d(n) (1-e(\alpha n))}{n}+\sum_{\substack{1\leq |n|\leq Z\\ P^{+}(n)> y}}\frac{\chi_d(n) (1-e(\alpha n))}{n}, 
\end{equation}
and show that uniformly over $\alpha$, the bulk of the distribution comes from the first part, over $y$-friable integers, with a suitable choice of the parameter $y$. Here $P^{+}(n)$ is the largest prime factor of $n$, and an integer $n$ is called $y$-friable (or $y$-smooth) if $P^+(n)\leq y$. To understand what choice of the parameter $y$ we should make we observe that 
\begin{equation}\label{BoundTrivialYFriablePart}
\max_{\alpha\in [0, 1)} \Bigg|\sum_{\substack{1\leq |n|\leq Z\\ P^{+}(n)\leq y}} \frac{\chi_d(n) (1-e(\alpha n))}{n} \Bigg|\ll \sum_{\substack{n\ge 1\\ P^{+}(n)\leq y}}\frac{1}{n}\ll \log y
\end{equation} 
by Mertens' Theorem. Hence, if the main part of the contribution to $m(\chi_d)>\tau$ is coming from $y$-friable integers, we should aim for a choice of $y$ such that\footnote{This is not completely correct, since the above argument shows that we rather have $y\approx e^{c\tau}$ for some constant $c>0$. However, it turns out that the optimal choice of $c$ is $c=1$.} $y\approx e^{\tau}$. Heuristically, for small $y$  ($y\leq \log x$ say) we can prescribe the values of $\chi_d(p)$ for $p\leq y$ with probability\footnote{This is correct in the range $y\leq \log\log x$, as proved in Lemma \ref{SignsLegendre} below. It also follows from GRH in the larger range $y\leq c\log x\log\log x$ for some small constant $c>0$, see Theorem 13.5 of \cite{MoBook}.} $2^{-\pi(y)}=\exp(-(\log 2+o(1)) y/\log y)$ by the Prime Number Theorem. Therefore, if $y\asymp e^{\tau}$ this probability looks like $\exp(-c_1e^{\tau}/\tau)$ for some positive constant $c_1$, which agrees with the statement of Theorem \ref{Main}. Thus, in order for this heuristic to work, we need to efficiently control the second part in \eqref{SeriesPolya} uniformly over $\alpha \in [0, 1)$. We achieve this in the following theorem, which is the key ingredient in the proofs of Theorems \ref{Main} and \ref{MainPositive}. 
\begin{thm}\label{KeyResult}
Let $h(n)$ be a completely multiplicative function such that $|h(n)|\leq 1$ for all $n$. Let $x$ be large and put $Z=x^{21/40}$. There exists a constant $c>0$ such that for all real numbers $2\leq y\leq c\log x\log_4 x/(\log_3x)$  and $1/\log y\leq A\leq 4$,  the number of fundamental discriminants  $|d|\leq x$ such that
$$ \max_{\alpha\in [0, 1)}\Bigg|\sum_{\substack{1\leq n\leq Z\\ P^{+}(n)> y}} \frac{\chi_d(n) h(n) e(n\alpha)}{n}\Bigg|> e^{\gamma} A$$
is 
$$ \ll x\exp\left(-\frac{A^2y}{2 \log y} \left(1+O\left(\frac{\log_2 y}{\log y}+ \frac{\log _4x}{A\log_2x\log_3x}\right)\right)\right).$$

\end{thm}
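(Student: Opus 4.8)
The plan is to bound the number of ``bad'' discriminants via a high-moment (large deviation) argument combined with the quadratic large sieve. Fix a parameter $k$ to be optimized at the end (roughly $k\asymp Ay/\log y$), and for each dyadic-type block split the range $y<P^+(n)\le Z$ into pieces $(P_j, P_{j+1}]$ so that each prime $p$ in a block contributes a factor of size roughly $1/p$. The first step is to remove the supremum over $\alpha\in[0,1)$: rather than taking $2k$-th moments of a single sum, one discretizes $\alpha$ to a fine net of spacing $\asymp 1/Z$ (paying a harmless $\log x$ factor from the derivative bound on the trigonometric polynomial), so that controlling $\max_\alpha$ reduces to controlling $\max$ over $\ll Z\log x$ points; since the final bound is $x\exp(-\,\text{something}\gg y)$ this polynomial loss is absorbed. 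Alternatively, and more cleanly, one replaces $e(n\alpha)$ by an absolute value and notes that $|1\pm e(n\alpha)|\le 2$, reducing to the sum $\sum_{n} \chi_d(n)h(n)/n$ with $P^+(n)>y$ — but one must be careful that this throws away the cancellation that makes the exponent $A^2 y/\log y$ rather than $Ay/\log y$, so the dyadic decomposition over the largest prime factor is essential: write $n=mp$ with $p=P^+(n)>y$, $P^+(m)\le p$, so the sum becomes $\sum_{y<p\le Z}\frac{\chi_d(p)h(p)}{p}\sum_{m\le Z/p,\ P^+(m)\le p}\frac{\chi_d(m)h(m)e(mp\alpha)}{m}$.

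The core estimate is then: for a fixed prime block $(P,2P]$ with $P>y$, and the corresponding Dirichlet polynomial $T_d(\alpha) := \sum_{P<p\le 2P}\frac{\chi_d(p)h(p)}{p}\,e(p\alpha)\,(\cdots)$, raise to the $2k$-th power, expand, and integrate/average over $d\in\mathcal{F}(x)$ using the quadratic large sieve (Lemma~\ref{LargeSieve}). The $2k$-th moment expansion produces a sum over $2k$-tuples of integers; the diagonal terms give the ``Gaussian'' contribution of size $\ll k!\,(V^2)^k$ where $V^2\approx\sum_{y<p}1/p^2$-type quantities, but the point here is different: we want the bound $\sum_p 1/p \cdot(\text{moment of smooth part})$. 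The smooth part $\sum_{m\le Z/p, P^+(m)\le p}\chi_d(m)h(m)e(mp\alpha)/m$ has $\ell^1$-norm $\ll\log p\le\log Z$, so trivially it is $\ll\log Z$; summing $1/p$ over $y<p\le Z$ gives $\ll\log(\log Z/\log y)$, which is nowhere near good enough — so instead one must exploit that for the moment bound to be $\exp(-A^2 y/\log y)$, the key is Mertens: $\sum_{y<p\le e^{A}y \text{ or so}} 1/p\approx A$, and the contribution of primes $p> e^{cA}y$ must be shown to be genuinely negligible with probability $1-O(x^{-\delta})$ via a separate large-sieve moment estimate (this is where $Z=x^{21/40}$ enters — $Z<x^{1/2}$ keeps the large sieve in its useful range $N\le x$). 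So the argument bifurcates: (a) primes $y<p\le y_0$ with $y_0=e^{O(A)}y$ handled by a direct $2k$-th moment with $k\asymp y/\log y$, giving the main exponent $A^2 y/(2\log y)$; (b) primes $y_0<p\le Z$ handled by a cruder second-moment or low-moment large-sieve bound showing that tail is $\ll e^\gamma A$ with overwhelming probability.

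For part (a), after the $2k$-th power expansion the main term is $\approx \frac{(2k)!}{2^k k!}\big(\sum_{y<p\le y_0}\frac{1}{p^2}|\widehat{M_p}|^2\big)^k$ — no, more precisely one wants to bound $\mathbb{E}\big|\sum_p \frac{\chi_d(p)h(p)}{p}c_p(\alpha)\big|^{2k}$ where $|c_p(\alpha)|\le$ (something like $\log p$ but on average much smaller); the quadratic large sieve gives $\sum_{d\in\mathcal{F}(x)}\big|\sum_{n\le N}a_n\chi_d(n)\big|^2\ll (x+N)\sum_{n\le N}|a_n|^2(\log\cdots)$, and expanding the $2k$-th power converts $a_n$ into $n$ ranging over products of $2k$ primes from the block times smooth parts, of size $\le N=(y_0 Z)^{2k}\cdot(\text{smooth})$. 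Here is the main obstacle: $N$ grows like $Z^{2k}$, which with $k\asymp y/\log y$ and $Z=x^{21/40}$ is of size $x^{(21/20)k/\log y\cdot\log y}$... one must ensure $N\ll x^{1-\epsilon}$ so that the large sieve's $(x+N)$ is $\ll x$; this forces a careful truncation — one replaces $e(mp\alpha)$ and the long smooth sum by a short initial segment, or one works with $\log n$ rather than $1/n$ weights, or one uses that after expansion the relevant integers $n=p_1\cdots p_{2k}m_1\cdots m_{2k}$ are squarefree-ish and one can bound by $x$ only if $2k\log Z \le (1-\epsilon)\log x$, i.e. $k\le \frac{(1-\epsilon)\log x}{2\log Z}\approx\frac{20}{21}(1-\epsilon)\log x\cdot\frac{1}{\log x}$... this is where the constraint $y\le c\log x\log_4 x/\log_3 x$ comes from, since $k\asymp Ay/\log y\le 4y/\log y$ and we need $k\log Z\ll\log x$. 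So the heart of the proof, and its main obstacle, is the bookkeeping that simultaneously: keeps $N$ below $x$ (controlling the range of $k$, hence the range of $y$), extracts the Mertens factor $\sum 1/p\approx A$ from the prime block, and shows the combinatorial/diagonal count in the $2k$-th moment expansion contributes only $k!\,(\text{const})^k$ — i.e. the ``off-diagonal'' terms, where the $2k$ primes don't pair up, are killed by the large sieve's main term $x\cdot\sum|a_n|^2$ rather than the square of the expected value. Finally one optimizes: choosing $k = \lfloor Ay/\log y\rfloor$ (up to lower-order corrections), Stirling on $k!$, and Mertens' theorem $\prod_{y<p\le y_0}(1+1/p)$-type estimates yield $\exp\big(-\tfrac{A^2 y}{2\log y}(1+O(\log_2 y/\log y))\big)$, and the extra error term $\frac{\log_4 x}{A\log_2 x\log_3 x}$ tracks exactly the slack between the admissible $k$ and the optimal one near the top of the $y$-range. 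Assembling parts (a) and (b) and undoing the $\alpha$-discretization completes the proof.
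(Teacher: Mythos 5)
Your high-level strategy is in the right neighbourhood — discretize $\alpha$, bound $2k$-th moments via the quadratic large sieve, separate a ``tail'' handled with a cheaper moment, and optimize $k$ — and you correctly diagnose the central obstruction (after expanding the $2k$-th power the coefficient length explodes past the large sieve's useful range $N\ll x$). But you do not resolve that obstruction, and your proposed bifurcation does not resolve it either. Splitting by the size of the \emph{largest prime factor} (primes $y<p\le y_0$ versus $p>y_0$) is not the right cut: an integer $n$ with $y<P^+(n)\le y_0$ can still be as large as $Z=x^{21/40}$, so the coefficient length in the expanded moment is still $\approx Z^{2k}$, and you are back at $k\log Z\ll\log x$, which as you observe forces $k\ll 1$. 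What the paper actually does is split by the size of $n$ itself, at a cutoff $Y=\exp(C\log_2 x\log_3 x/\log_4 x)$ that is far smaller than $Z$, and — this is the crucial step you do not have — use \emph{tailored} moment orders for each dyadic range of $n$: a single bounded moment $k\le 1/\ep$ for $x^{\ep}\le N\le Z$ (where Heath-Brown's sharper large sieve \eqref{HBLargeSieve} provides the savings; see Proposition~\ref{Tail1}), moments $k\asymp\log x/\log N$ for $Y\le N\le x^\ep$ (Elliott's large sieve \eqref{BabyLargeSieve}; Proposition~\ref{Tail2}), and only for the remaining short sum $n\le Y$ the genuinely high moment $k\asymp A^2 y/(2\log y)$. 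For $n\le Y$ one has $Y^k\le x^{1/3}$ precisely when $k\le \log x/(3\log Y)$, and matching this with the optimal $k$ produces the range $y\le c\log x\log_4 x/\log_3 x$ in the theorem — your derivation of that range from $k\log Z\ll\log x$ is incorrect and would give a drastically smaller range.

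There is a second gap: you never exhibit where the constant $A^2/(2)$ in the exponent actually comes from. In the paper it arises from three ingredients that you do not articulate. First, a H\"older step (the analogue of \eqref{HolderSmooth}) factors $n=ab$ with $P^+(a)\le y<P^-(b)$ and pulls out the $y$-friable part, producing a factor $\big(\sum_{P^+(a)\le y}1/a\big)^{2k-1}=(e^\gamma\log y+O(1))^{2k-1}$ and leaving moments of $S_d(y,z)=\max_\alpha|\sum_{n>1,\,P^-(n)>y}\chi_d(n)h(n)e(\alpha n)/n|$. Second, after applying the large sieve only diagonal terms survive, and one is led to the random-multiplicative-function quantity $\mathcal{M}_y(k)=\ex\big|\sum_{n>1,\,P^-(n)>y}\X(n)/n\big|^{2k}$; Proposition~\ref{MomentsUnsmooth} gives $\mathcal{M}_y(k)\le e^{O(k\log_2 y/\log y)}(2k/(ey\log y))^k$ (a Gaussian-type tail for $\sum_{p>y}\X(p)/p$, whose variance is $\approx 1/(y\log y)$). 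Third, Chebyshev against $(e^\gamma A)^{2k}$ then leaves $\big(2k\log y/(eA^2 y)\big)^k$, and minimizing over $k$ gives $k=A^2 y/(2\log y)$ and the exponent $-A^2 y/(2\log y)$; your guess $k\asymp Ay/\log y$ is of the same order but not the optimizer, and your Mertens heuristic over a prime block $y<p\le e^{O(A)}y$ does not produce the correct constant.
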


Bober, Goldmakher, Granville and Koukoulopoulos \cite{BGGK} proved a similar result in the case of non-principal characters modulo a large prime $q$. However our proof differs from theirs as we shall now explain. Let $z=q^{21/40}$. In order to prove the analogue of Theorem \ref{KeyResult} for the family of non-principal characters modulo $q$, the authors of \cite{BGGK} established non-trivial upper bounds for the $2k$-th moment of 
\begin{equation}\label{MainTail}\max_{\alpha\in [0, 1)} \Bigg|\sum_{\substack{1\leq n\leq z\\ P^{+}(n)>y}}\frac{\chi(n)e(n\alpha)}{n}\Bigg|
\end{equation}
for $k$ roughly up to $\log q$. By decoupling the $\alpha$ from the character $\chi$ and expanding the $2k$-th moment of the inner sum in \eqref{MainTail}, one needs to control terms which have size as large as $z^k>q^{k/2}$ with $k$ up to $\log q$. This was possible in the case of the family of non-principal characters modulo $q$ thanks to the orthogonality relations of characters, which imply that the off-diagonal terms in these moments are given by $m\equiv n \mod q$ with $m\neq n$ and $m, n\leq z^k$. The authors of \cite{BGGK} proceed to bound these off-diagonal terms using intricate estimates involving the $k$-th divisor function. 

This argument is no longer valid for families of quadratic characters, since in this case only ``quasi-orthogonality relations'' are known (see for example Lemma 4.1 of \cite{GrSo03}), which allow one to control terms up to size $x^c$ for some $c<1$, if we run over fundamental discriminants $|d|\leq x$. To overcome this problem, we made the key observation 
that when $N<n<2N$ and $N$ is very large, one only needs to compute a small moment over the corresponding sum over the interval $[N, 2N]$, in order to show that this sum is small for most characters. More specifically, our approach consists of first splitting the sum in \eqref{MainTail} (in the case of quadratic characters associated to fundamental discriminants $|d|\leq x$) into two parts: the first over $n\leq Y$ and the second over $Y< n\leq Z$, where  $Y=(\log x)^{W(x)}$ is relatively ``small'' (our method allows one to choose $W(x)=\log_3 x/(\log_4 x)$). We first bound the $2k$-th moments of the first sum over $n\leq Y$ and show that only the diagonal terms contribute\footnote{We prefer to do this using an easy version of the quadratic large sieve (see \eqref{BabyLargeSieve} below), rather than employing the quasi-orthogonality relations,  in order to get a clean argument.} to these moments if $k\leq (\log x)/(3 \log Y)$ say. We then proceed to show that the second part over the large $n$'s is very small for most characters. To this end, we split it in dyadic intervals $N\leq n\leq 2N$ and then bound the 2$\ell_N$ moment of 
$$
\max_{\alpha\in [0, 1)} \Bigg|\sum_{N<n<2N}\frac{\chi(n)e(n\alpha)}{n}\Bigg|
$$
where $\ell_N$ is chosen such that $N^{\ell_N}$ is roughly of size $x$. This allows us to show that each of these parts is small for most characters using \emph{tailored} moments according to the size of $N$. In the range $Y<N<x^{\ep}$ we use \eqref{BabyLargeSieve} below, which is an easy version of the quadratic large sieve first due to  Elliott, while in the remaining range $x^{\ep}<N<Z$ we use the quadratic large sieve of Heath-Brown (see \eqref{HBLargeSieve} below). 

\begin{proof}[Deducing the upper bound of Theorem \ref{Main} from Theorem \ref{KeyResult}]
Let $Z=x^{21/40}$ and $\delta=1/100$. By \eqref{Polya} we have 
\begin{equation}\label{SplitPolyaSmoothUnsmooth}
m(\chi_d) \leq \frac{e^{-\gamma}}{2} \max_{\alpha \in [0, 1)} \Bigg|\sum_{\substack{1\leq |n|\leq Z\\ P^{+}(n)\leq y}} \frac{\chi_d(n) (1-e(\alpha n))}{n}\Bigg| + 2e^{-\gamma} \max_{\alpha\in [0, 1)}\Bigg|\sum_{\substack{1\leq n\leq Z\\ P^{+}(n)> y}} \frac{\chi_d(n) e(n\alpha)}{n}\Bigg|+ O\left(x^{-\delta}\right),
\end{equation}
for any fundamental discriminant $-x\leq d\leq -x^{1-\delta}$. 
Since we do not have control over the first part over $y$-friable integers, we shall bound it using Corollary 3.5 of \cite{BGGK} which gives
\begin{equation}\label{CorBGGK} 
\max_{\alpha \in [0, 1)} \Bigg|\sum_{\substack{1\leq |n|\leq Z\\ P^{+}(n)\leq y}} \frac{\chi_d(n) (1-e(\alpha n))}{n}\Bigg| \leq 
2e^{\gamma} \log y+ 2\log 2 + O\left(\frac{\log\log y}{\log y}\right).
\end{equation}
We now set $y= e^{\tau- e^{-\gamma} \log 2 - 2 B}$ where $B >0 $ is a parameter to be chosen. Combining this last estimate with \eqref{SplitPolyaSmoothUnsmooth} and \eqref{CorBGGK} we deduce that 
$$ 
m(\chi_d)  \leq  \tau -2B+ 2 e^{-\gamma} \max_{\alpha \in [0, 1)} \Bigg|\sum_{\substack{1\leq n\leq Z \\ P^{+}(n)> y}} \frac{\chi_d(n) e(\alpha n)}{n}\Bigg| +  O\left(\frac{\log \tau }{\tau}\right).
$$ 
Thus, the proportion of fundamental discriminants $-x\leq d\leq -x^{1-\delta}$ such that $m(\chi_d)>\tau$ is bounded by the proportion of fundamental discriminants $|d|\leq x$ such that 
$$ 
\max_{\alpha \in [0, 1)} \Bigg|\sum_{\substack{1\leq n\leq Z\\ P^{+}(n)> y}} \frac{\chi_d(n) e(\alpha n)}{n}\Bigg|>e^{\gamma} \left(B- C_0 \frac{\log \tau}{\tau}\right),
$$
for some suitably large constant $C_0>0$.
Choosing $B=1$ and appealing to Theorem \ref{KeyResult} completes the proof.

\end{proof}

We now turn our attention to the lower bound of Theorem \ref{Main} which is easier. For $d<0$ one has the identity (see Theorem 9.21 of \cite{MVBook})
$$ \sum_{n\leq |d|/2} \chi_d(n)= (2-\chi(2))\frac{\mathcal{G}(\chi_d)}{i\pi} L(1, \chi_d),$$
which implies
\begin{equation}\label{LowerBoundML}
m(\chi_d) \geq e^{-\gamma}L(1, \chi_d).
\end{equation}

Therefore, one can immediately deduce a corresponding lower bound for the distribution function $\Psi_x^-(\tau)$ from the following result, which follows from a straightforward adaptation of the work of Granville and Soundararajan \cite{GrSo03} on the distribution of $L(1, \chi_d)$.
\begin{thm}[Granville-Soundararajan]\label{GrSoThm}
Let $B_0$ be the constant in Theorem \ref{Main}, $\psi$ be a character modulo some $b\in \{1,3\}$, and $1\leq \tau \leq \log_2x+\log_4 x-20-M$ for some $M\geq 0$. Then we have 
$$\frac{1}{|\mathcal{F}^{\pm}(x)|}\left|\left\{d\in \mathcal{F}^{\pm}(x) : L(1, \chi_d \psi)>\frac{\phi(b)}{b} e^{\gamma} \tau\right\}\right|= \exp\left(-\frac{e^{\tau-B_0}}{\tau}\left(1+O\left(\frac{1}{\tau}+ e^{-e^M}\right)\right)\right).$$
\end{thm}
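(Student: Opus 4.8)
The plan is to derive Theorem~\ref{GrSoThm} as the twisted version of the main theorem of Granville and Soundararajan \cite{GrSo03} on the distribution of $L(1,\chi_d)$, following their argument with only cosmetic changes. Since $\psi$ is a real character of conductor $b\in\{1,3\}$, twisting by $\psi$ alters only the finitely many Euler factors at primes dividing $b$: this is precisely what produces the normalising constant $\phi(b)/b$ in the threshold, and it has no effect on the large-deviation rate. The key auxiliary parameter is the truncation level $z=e^{\tau}$, which in the stated range $\tau\le\log_2 x+\log_4 x-20-M$ satisfies $z\le(\log x)(\log_3 x)=x^{o(1)}$; keeping $z$ this small is exactly the constraint that makes the moment method below work, and it is what determines the admissible range of $\tau$.

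First I would reduce $L(1,\chi_d\psi)$ to a short Euler product: as in \cite{GrSo03}, one shows that for all but $o\big(|\mathcal{F}^{\pm}(x)|\,\Psi^{\pm}_x(\tau)\big)$ discriminants $d\in\mathcal{F}^{\pm}(x)$ one has $L(1,\chi_d\psi)=\big(1+o(1)\big)\prod_{p\le z}\big(1-\chi_d(p)\psi(p)/p\big)^{-1}$, the contribution of the primes $p>z$ being controlled by bounding a high power moment of $\sum_{z<p\le w}\chi_d(p)/p$ over $|d|\le x$ against its Gaussian counterpart, whose variance is $\ll 1/(z\log z)=o(1)$. Next I would compare the distribution of the truncated product over $d\in\mathcal{F}^{\pm}(x)$ with that of the random Euler product $L(1,\X\psi;z)=\prod_{p\le z}\big(1-\X(p)\psi(p)/p\big)^{-1}$, where the $\X(p)$ are independent and follow the natural local law of $\chi_d(p)$ over fundamental discriminants (in particular $\X(p)=0$ for $p\mid b$). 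Expanding moments of the truncated product over $d\in\mathcal{F}^{\pm}(x)$ (which is what one needs for both the upper and lower bounds on $\Psi^{\pm}_x(\tau)$) produces sums $\sum_{n\le z^{k}}(a_n/n)\chi_d(n)$ with $a_n\ge 0$, and the quasi-orthogonality relations for quadratic characters, together with the quadratic large sieve of Heath--Brown when higher moments are needed, applied over the positive, respectively negative, fundamental discriminants, show that the off-diagonal terms are negligible as long as $z^{k}\le x^{1-\ep}$, i.e.\ for $k$ up to $\asymp\log x/\tau$. This matches the two distributions down to scales of size $\exp(-e^{\tau-B_0}/\tau)$ throughout the stated range; restricting to a single sign of $d$ costs nothing, since it only imposes a congruence condition that the large sieve accommodates.

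It then remains to carry out the large-deviation estimate in the random model, namely
\[
\pr\Big[L(1,\X\psi;z)>\tfrac{\phi(b)}{b}e^{\gamma}\tau\Big]
=\exp\!\Big(-\frac{e^{\tau-B_0}}{\tau}\big(1+O(1/\tau+e^{-e^{M}})\big)\Big).
\]
Writing $\log L(1,\X\psi;z)=\sum_{p\le z}\X(p)\psi(p)/p+O(1)$ and running a Cram\'er/Laplace analysis, the rate equals $\sup_{\lambda>0}\big(\lambda V-\sum_{p\le z}\log\cosh(\lambda/p)\big)$ for an appropriate $V=\log\tau+O(1)$, the optimal $\lambda$ being determined by differentiating, namely $V=\sum_{p\le z}\tfrac{\tanh(\lambda/p)}{p}$ (the function $\tanh$ thus entering as $\tfrac{d}{d\lambda}\log\cosh$). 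By Mertens' theorem $\log\lambda\cdot\big(\sum_{p}\tfrac{\tanh(\lambda/p)}{p}-\sum_{p\le\lambda}\tfrac1p\big)\to B_0=\int_0^1\tfrac{\tanh y}{y}\,dy+\int_1^{\infty}\tfrac{\tanh y-1}{y}\,dy$, so the constraint forces $\log\lambda=\tau-B_0+o(1)$; feeding this back, $\lambda V-\sum_{p\le z}\log\cosh(\lambda/p)=\sum_{p}\big(\tfrac{\lambda}{p}\tanh\tfrac{\lambda}{p}-\log\cosh\tfrac{\lambda}{p}\big)\sim\lambda/\log\lambda$ (using $\int_0^{\infty}(u\tanh u-\log\cosh u)u^{-2}\,du=1$), which is exactly $e^{\tau-B_0}/\tau$. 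The $O(1/\tau)$ in the theorem is the usual second-order saddle-point correction, and $O(e^{-e^{M}})$ records the loss incurred as $\tau$ approaches the edge $\log_2 x+\log_4 x-20$, where $z$ ceases to be comfortably $x^{o(1)}$.

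The main obstacle, as in \cite{GrSo03}, is the moment comparison of the middle step: one must check that the quadratic large sieve over the restricted families $\mathcal{F}^{+}(x)$ and $\mathcal{F}^{-}(x)$ controls the off-diagonal contributions to the integer and twisted moments up to order $\asymp\log x/\tau$ with an error comfortably smaller than the target $\exp(-e^{\tau-B_0}/\tau)$. This, together with the truncation step, is what pins down both the precise range $\tau\le\log_2 x+\log_4 x-20-M$ and the shape of the error term in the theorem; everything else is a routine adaptation of \cite{GrSo03}.
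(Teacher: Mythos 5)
The paper does not actually give a proof of Theorem~\ref{GrSoThm}: it labels the statement ``Granville--Soundararajan'' and records only that it ``follows from a straightforward adaptation of the work of Granville and Soundararajan \cite{GrSo03}'' (the adaptations being the twist by a character $\psi$ of conductor $b\le 3$ and the restriction to a single sign of discriminant). Your reconstruction of that argument is faithful and essentially complete at the sketch level: the truncation to $\prod_{p\le z}$ with $z=e^{\tau}\le (\log x)(\log_3 x)$, the explanation of why the twist produces the normalisation $\phi(b)/b$ while leaving the rate unchanged, the comparison with the random Euler product by moments up to order $\asymp\log x/\tau$, and the Cram\'er/saddle-point computation (with the correct verification that $\int_0^\infty(u\tanh u-\log\cosh u)u^{-2}\,du=1$ and $\log\lambda=\tau-B_0+o(1)$) all match. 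One small imprecision: Granville--Soundararajan carry out the moment comparison using their elementary quasi-orthogonality estimate (Lemma~4.1 of \cite{GrSo03}, essentially P\'olya--Vinogradov), which already controls off-diagonal terms up to $n\le x^{c}$ for a fixed $c<1$; invoking Heath--Brown's quadratic large sieve is not needed there and is not what they do, though it would of course also suffice. Similarly, the ``natural local law'' of $\chi_d(p)$ over fundamental discriminants is not exactly the symmetric Rademacher law (it has a small mass at $0$), but as you intimate this only affects lower-order terms and not $B_0$. With those caveats, your write-up is a correct and more detailed version of what the paper implicitly relies on.
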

One can do a little better by combining our Theorem \ref{KeyResult} with the following result, which we extract from the proof of Theorem 1.2 of  \cite{BGGK}.  

\begin{thm} [Bober-Goldmakher-Granville-Koukoulopoulos]\label{BGGKOdd}
Let $C$ be a positive constant. Let $q$ be a large positive integer, and $y\leq (\log q)^2$ be a large real number. Let $\chi$ be a primitive odd character modulo $q$ such that 
$$|L(1, \chi)| > e^{\gamma} \log y- C \quad \textup{ and  } \quad \max_{\alpha\in [0, 1)} \Bigg|\sum_{\substack{n\geq 1\\P^{+}(n)> y}} \frac{\chi(n)e(n\alpha)}{n}\Bigg|\leq 1.$$
Then we have
$$
m(\chi) > e^{-\gamma} \big(|L(1, \chi)|+\log 2\big)+O\left(\frac{(\log\log  y)^2}{\sqrt{\log y}}\right).
$$ 

\end{thm}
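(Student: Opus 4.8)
The plan is to start from the P\'olya Fourier expansion \eqref{PolyaFourier} and split the inner sum over $1\leq|n|\leq Z$ into the $y$-friable part and the $y$-rough part, exactly as in \eqref{SeriesPolya}. The hypothesis bounds the maximum of the rough part uniformly in $\alpha$ by $1$, so up to the error $O(x^{-\delta})$ coming from the truncation in \eqref{PolyaFourier} (harmless since $q$ is large and $y\leq(\log q)^2$), we are reduced to understanding
$$
\max_{\alpha\in[0,1)}\Bigg|\sum_{\substack{1\leq|n|\leq Z\\ P^{+}(n)\leq y}}\frac{\chi(n)(1-e(\alpha n))}{n}\Bigg|.
$$
The key point is that, since $\chi$ is odd, the value $\alpha=1/2$ makes $1-e(\alpha n)$ equal to $0$ for even $n$ and $2$ for odd $n$, and the contribution of odd $y$-friable $n$ can be related to an Euler product. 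More precisely, one shows that the choice $\alpha=1/2$ already produces a sum of size $e^{\gamma}\log y - C + \log 2 + o(1)$, by using the Euler product factorization of the $y$-friable sum, the hypothesis $|L(1,\chi)|>e^{\gamma}\log y-C$, and the standard fact that $L(1,\chi)=\prod_{p}(1-\chi(p)/p)^{-1}$ so that truncating the product at $y$ costs at most a bounded amount when $|L(1,\chi)|$ is already that large (here is where $y\leq(\log q)^2$ and the pretentious machinery of \cite{GrSo07} enter, to control $\sum_{p>y}\chi(p)/p$). The extra $\log 2$ is precisely the local factor at the prime $2$: removing the even $n$ from the friable sum multiplies the relevant Euler product by $(1-\chi(2)/2)$ or its analogue, contributing $\log 2$ to the logarithm; this is the same phenomenon responsible for the constant $\eta=e^{-\gamma}\log 2$ throughout the paper.

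Concretely, the main steps in order are: (i) insert \eqref{PolyaFourier}, evaluate at $\alpha=1/2$, and discard the rough part using the hypothesis, picking up only $O(1)$ and not affecting the leading term; (ii) write the $y$-friable sum over odd $n$ as a weighted Euler product over primes $p\leq y$, $p\neq 2$, and compare it to $L(1,\chi)$; (iii) estimate the discrepancy between $\prod_{p\leq y}(1-\chi(p)/p)^{-1}$ and $L(1,\chi)$ via the pretentious bounds of Granville--Soundararajan, so that the truncated product is $L(1,\chi)(1+o(1))$ with an error of size $O((\log_2 y)^2/\sqrt{\log y})$, which after taking logarithms becomes the stated error term; (iv) combine with $m(\chi)\geq \frac{e^{-\gamma}}{2}\cdot(\text{value at }\alpha=1/2)$ from \eqref{Polya} and read off $m(\chi)>e^{-\gamma}(|L(1,\chi)|+\log 2)+O((\log_2 y)^2/\sqrt{\log y})$; one uses $|L(1,\chi)|$ rather than $L(1,\chi)$ by noting that the sign can be absorbed into replacing $\alpha=1/2$ by $\alpha=0$ versus $\alpha=1/2$, or simply by the triangle inequality since we only want a lower bound on the maximum.

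The main obstacle will be step (iii): showing rigorously that truncating the Euler product for $L(1,\chi)$ at $y\leq(\log q)^2$ — which is far beyond the range where one can prescribe $\chi(p)$ freely — loses only a bounded multiplicative amount, with an explicit error of the right shape $O((\log_2 y)^2/\sqrt{\log y})$. This is exactly the technical heart of Theorem 1.2 of \cite{BGGK}, and it rests on their pretentious-theory estimates for $\sum_{y<p\leq z}\chi(p)/p$ (controlling how close $\chi$ can pretend to be to a character that would make this tail large), which hold for all primitive characters and in particular for our $\chi$. Since the statement here is extracted verbatim from that proof, the role of this proposal is really to identify which estimates of \cite{BGGK} we invoke; the combinatorial/analytic bookkeeping (the $\log 2$ from the prime $2$, the passage from the Euler product to the logarithm, the dependence of the error on $y$) is routine once those inputs are in hand, and will be carried out following \cite{BGGK} line by line.
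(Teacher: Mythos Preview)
There is a concrete gap: evaluating at $\alpha=1/2$ does \emph{not} produce the extra $\log 2$. For an odd primitive $\chi$ the P\'olya sum at $\alpha=1/2$ is exactly
\[
\sum_{n\geq 1}\frac{\chi(n)(1-\cos(\pi n))}{n}
=2\sum_{n\ \mathrm{odd}}\frac{\chi(n)}{n}
=(2-\chi(2))\,L(1,\chi),
\]
which is precisely the identity behind the \emph{weaker} inequality \eqref{LowerBoundML}. Under the hypotheses of the theorem one has $|L(1,\chi)|>e^{\gamma}\log y-C$ while trivially $\big|\prod_{p\le y}(1-\chi(p)/p)^{-1}\big|\le e^{\gamma}\log y+O(1)$; together with the second hypothesis (at $\alpha=0$) this forces $\sum_{p\le y}(1-\mathrm{Re}\,\chi(p))/p=O(1/\log y)$, and in particular $\chi(2)=1+O((\log y)^{-1/2})$. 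Hence $|2-\chi(2)|=1+O((\log y)^{-1/2})$ and the $\alpha=1/2$ evaluation yields only $m(\chi)\ge e^{-\gamma}|L(1,\chi)|+O(\sqrt{\log y})$, with no additive $\log 2$ and with an error far cruder than the stated $O((\log_2 y)^2/\sqrt{\log y})$. Your step (ii) --- that removing the even $n$ ``contributes $\log 2$ to the logarithm'' --- is an arithmetic slip: when $\chi(2)\approx 1$ the factor $1-\chi(2)/2\approx 1/2$ is exactly cancelled by the leading $2$. The paper in fact introduces Theorem~\ref{BGGKOdd} precisely as the improvement over \eqref{LowerBoundML}; your proposal recovers \eqref{LowerBoundML}, not the theorem.

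The paper does not prove Theorem~\ref{BGGKOdd} but extracts it from the end of Section~4 of \cite{BGGK}, and the remark following the statement names the actual ingredients: Lemmas~3.2 and~3.4 of \cite{BGGK} (quantitative estimates for $\sum_{P^{+}(n)\le y}e(n\alpha)/n$ near and away from rationals with small denominator) together with Bober's work \cite{Bob} on averages of character sums. The $\log 2$ and the error term both come out of that analysis --- one must locate $\alpha$ where $\mathrm{Re}\sum_{P^{+}(n)\le y}\chi(n)e(n\alpha)/n$ is genuinely close to $-\log 2$, not merely $O(1)$ as at $\alpha=1/2$ --- and not from the pretentious Euler-product truncation you invoke in step~(iii). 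To repair the argument you must replace the single-point evaluation by the Bober/BGGK analysis of the friable exponential sum across $\alpha$.
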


\begin{rem}
We note that $q$ is assumed to be prime in the statement of Theorem 1.2 of \cite{BGGK}. However,  this is only used to show that many non-principal characters $\chi \bmod q$ satisfying the assumptions of Theorem \ref{BGGKOdd} exist. Indeed, the proof of Theorem \ref{BGGKOdd} (see the end of Section 4 of \cite{BGGK}) only uses estimates on exponential sums over $y$-friable integers (Lemmas 3.2 and 3.4 of \cite{BGGK})  together with ideas of Bober \cite{Bob} on averages of character sums to arbitrary moduli.   
\end{rem}

\begin{proof}[Deducing the lower bound of Theorem \ref{Main} from Theorems \ref{KeyResult}, \ref{GrSoThm} and \ref{BGGKOdd}]
Let $Z=x^{21/40}$ and $\delta=1/100$. By partial summation and the P\'olya-Vinogradov inequality, it follows that 
\begin{equation}\label{PolyaUniformTail}
\max_{\alpha\in [0, 1)} \Bigg|\sum_{n>Z}\frac{\chi_d(n)e(n\alpha)}{n}\Bigg| \ll \frac{\sqrt{|d|}\log |d|}{Z} \ll x^{-\delta},
\end{equation}
for any fundamental discriminant $|d|\leq x$. Furthermore, by Lemma 3.2 of \cite{BGGK} we have 
\begin{equation}\label{PolyaUniformTail3}
\max_{\alpha\in [0, 1)} \Bigg|\sum_{\substack{n>Z \\ P^{+}(n)\leq y}}\frac{\chi_d(n)e(n\alpha)}{n}\Bigg| \leq \sum_{\substack{n>Z \\ P^{+}(n)\leq y}} \frac{1}{n} \ll e^{-\sqrt{\log y}}, 
\end{equation}
for any real number $2\leq y\leq (\log x)^2$. Combining these estimates implies that 
\begin{equation}\label{PolyaUniformTail2}
\max_{\alpha\in [0, 1)} \Bigg|\sum_{\substack{n>Z \\ P^{+}(n)> y}}\frac{\chi_d(n)e(n\alpha)}{n}\Bigg|  \ll e^{-\sqrt{\log y}},
\end{equation}
for any fundamental discriminant $|d|\leq x$ and  any real number $2\leq y\leq (\log x)^2$.

Let $C_1, C_2>0$ be suitably large constants and put $y=e^{\tau+ C_1}$. If $C_1$ is large enough, then combining  Theorems \ref{KeyResult} and \ref{GrSoThm} we deduce that the number of fundamental discriminants $-x<d<0$ such that 
$$L(1, \chi_d)>e^{\gamma} \tau-\log 2 + C_1\frac{(\log \tau)^2}{\sqrt{\tau}}$$
and 
$$ \max_{\alpha\in [0, 1)} \Bigg|\sum_{\substack{1\leq n\leq Z\\P^{+}(n)> y}} \frac{\chi_d(n)e(n\alpha)}{n}\Bigg|\leq 1-\frac{C_2}{\tau}$$
is 
$$ \geq \exp\left(-\frac{e^{\tau-\eta
-B_0}}{ \tau} \left(1+O\left(\frac{(\log \tau)^2 }{\sqrt{\tau}}\right)\right)\right). $$
By \eqref{PolyaUniformTail2} we deduce that for any such discriminant $d$ we have 
$$ \max_{\alpha\in [0, 1)} \Bigg|\sum_{\substack{n\geq 1\\P^{+}(n)> y}} \frac{\chi_d(n)e(n\alpha)}{n}\Bigg| \leq 1,$$ and hence it follows from Theorem \ref{BGGKOdd} that  $m(\chi_d)>\tau$ if $C_1$ is suitably large. This completes the proof.
\end{proof}

\subsection{The case of positive discriminants: Proof of Theorem \ref{MainPositive}}
Recall that $\chi_d$ is an even character if $d$ is a positive fundamental discriminant. In this case, we shall appeal to the following structure result for even characters with large sums, which we extract from the proof of Theorem 2.3 of  \cite{BGGK}.

\begin{thm}[Bober-Goldmakher-Granville-Koukoulopoulos]\label{BGGKEven}
Let $C$ be a positive constant. Let $q$ be a large positive integer and $y\leq (\log q)^2$ be a large real number.  Let $\psi\mod q$ be a primitive even character such that 
$$ \max_{\alpha\in [0, 1)} \Bigg|\sum_{\substack{n\geq 1\\ P^{+}(n)\leq y}} \frac{\psi(n)e(n\alpha)}{n}\Bigg|> \frac{e^{\gamma}}{\sqrt{3}}\log y-C\log\log y, \  \textup{ and  } \ 
\max_{\alpha\in [0, 1)} \Bigg|\sum_{\substack{n\geq 1\\P^{+}(n)> y}} \frac{\psi(n)e(n\alpha)}{n}\Bigg|\leq 1.
$$
Then we have 
$$m(\psi)= \frac{e^{-\gamma}\sqrt{3}}{2} \big|L(1, \psi \chi_{-3})\big| + O(\log \log y).$$
\end{thm}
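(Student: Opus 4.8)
The plan is to reproduce, for this structural statement, the argument behind Theorem~2.3 of \cite{BGGK}: it is the ``even'' counterpart of the reasoning underlying Theorem~\ref{BGGKOdd}, with the character $\chi_{-3}$ and the constant $\sqrt3/2$ playing the roles of the principal character and of $\log 2$. One starts from P\'olya's expansion \eqref{PolyaFourier}. Since $\psi$ is even, $\psi(-1)=1$, so collapsing the terms $\pm n$ turns the character sum into a \emph{sine} sum, and with $Z=q^{21/40}$ and $|\mathcal{G}(\psi)|=\sqrt q$ one obtains, exactly as in \eqref{Polya},
\[
m(\psi)=e^{-\gamma}\max_{\alpha\in[0,1)}\Bigl|\sum_{1\le n\le Z}\frac{\psi(n)\sin(2\pi n\alpha)}{n}\Bigr|+O(q^{-\delta}).
\]
Splitting the sine sum at $y$, writing $\sin(2\pi n\alpha)=\tfrac{1}{2i}(e(n\alpha)-e(-n\alpha))$ and using that the maximum over $\alpha$ also sees $1-\alpha$, the second hypothesis bounds the contribution of $P^+(n)>y$, $n\le Z$ by $1+O(e^{-\sqrt{\log y}})$ uniformly in $\alpha$ (the part $n>Z$ being absorbed by Lemma~3.2 of \cite{BGGK}, as in \eqref{PolyaUniformTail3}). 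Hence, writing $S(\alpha):=\sum_{n\le Z,\ P^+(n)\le y}\psi(n)\sin(2\pi n\alpha)/n$, we get $m(\psi)=e^{-\gamma}\max_{\alpha}|S(\alpha)|+O(1)$, and everything reduces to understanding the $y$-smooth sum $S(\alpha)$.

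For the lower bound I would evaluate $S$ at the distinguished point $\alpha=1/3$, using the elementary identity $\sin(2\pi n/3)=\tfrac{\sqrt3}{2}\chi_{-3}(n)$ valid for every integer $n$:
\[
S(1/3)=\frac{\sqrt3}{2}\sum_{\substack{n\le Z\\ P^+(n)\le y}}\frac{\psi(n)\chi_{-3}(n)}{n}=\frac{\sqrt3}{2}\prod_{p\le y}\Bigl(1-\frac{\psi\chi_{-3}(p)}{p}\Bigr)^{-1}+O\bigl(e^{-\sqrt{\log y}}\bigr),
\]
where the smooth tail $n>Z$ is discarded by Lemma~3.2 of \cite{BGGK}. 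Evaluating the second hypothesis at $\alpha=1/3$ and $\alpha=2/3$ and subtracting isolates $\sum_{P^+(n)>y}\psi\chi_{-3}(n)/n$, so this quantity is $O(1)$; since the first hypothesis forces $\bigl|\prod_{p\le y}(1-\psi\chi_{-3}(p)/p)^{-1}\bigr|\asymp\log y$ by Mertens, it follows that $\prod_{p>y}(1-\psi\chi_{-3}(p)/p)^{-1}=1+O(1/\log y)$, hence $|S(1/3)|=\tfrac{\sqrt3}{2}|L(1,\psi\chi_{-3})|+O(1)$ and $m(\psi)\ge\tfrac{e^{-\gamma}\sqrt3}{2}|L(1,\psi\chi_{-3})|+O(1)$.

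For the matching upper bound I must show no $\alpha$ beats $\alpha=1/3$, i.e. $\max_\alpha|S(\alpha)|=|S(1/3)|+O(\log\log y)$. The trivial bound $|S(\alpha)|\le e^\gamma\log y+O(1)$ (Mertens), together with the first hypothesis, shows the maximizing $\alpha_0$ has $|S(\alpha_0)|\ge\tfrac{e^\gamma}{\sqrt3}\log y-C\log\log y$, so $S$ is within $O(\log\log y)$ of its largest attainable size. Approximating $\alpha$ by a rational $a/\ell$ with $\ell$ bounded, expanding $e(\pm n\alpha)$ into Dirichlet characters, and applying Hal\'asz-type estimates, $|S(\alpha)|$ is controlled by $\sum_{P^+(n)\le y}|\psi\xi(n)|/n$ over characters $\xi$ of bounded conductor, with a combinatorial factor which — after accounting for the Euler factors lost at the primes dividing $\ell$ — is \emph{strictly} largest at $\ell=3$, $\xi=\chi_{-3}$, where it equals $1/\sqrt3$ times $e^\gamma\log y$; this is precisely the even-character optimization of Granville and Soundararajan \cite{GrSo07}, which explains why competitors such as $\alpha=1/4$ (factor $1/2$, after losing the Euler factor at $p=2$ since $\chi_{-4}(2)=0$) or $\alpha=1/6$ cannot reach the threshold. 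Consequently, saturating this value to within $O(\log\log y)$, as the first hypothesis requires, forces $\psi$ to pretend to be $\chi_{-3}$ up to $y$; and since two distinct characters of bounded conductor cannot mutually pretend up to $y$, it follows that $|S(\alpha)|=O(\log\log y)$ for every $\alpha$ bounded away from a third. Hence $\max_\alpha|S(\alpha)|=|S(1/3)|+O(\log\log y)$, and combining with the previous steps gives $m(\psi)=\tfrac{e^{-\gamma}\sqrt3}{2}|L(1,\psi\chi_{-3})|+O(\log\log y)$.

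The main obstacle is this last step, which is the only place where the two hypotheses genuinely interact: the ``$y$-smooth part is large'' hypothesis has to be driven through a Hal\'asz / pretentious-distance argument to conclude that $\psi$ pretends to be $\chi_{-3}$, and only afterwards does the ``$y$-rough part is small'' hypothesis allow one to pass from the $y$-smooth truncation of $L(1,\psi\chi_{-3})$ to the actual value. Singling out $\chi_{-3}$ (equivalently $\alpha=1/3$) as the \emph{unique} extremizer rests on Granville and Soundararajan's even-character analysis; the rest is routine bookkeeping with Lemmas~3.2 and~3.4 of \cite{BGGK}, Mertens' theorem, and — to handle composite $q$ — Bober's estimates for character-sum averages to arbitrary moduli \cite{Bob}, exactly as in the proof of Theorem~\ref{BGGKOdd}.
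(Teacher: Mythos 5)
The paper does not actually prove Theorem~\ref{BGGKEven}; it extracts it from the proof of Theorem~2.3 of~\cite{BGGK}, and the remark immediately following the statement points to Section~8 of that paper, the pretentious theory of~\cite{GrSo07}, and the Montgomery--Vaughan estimates~\cite{MV77}, noting that these ingredients work for arbitrary primitive even characters. So there is no proof in this paper to compare against: what you have written is a reconstruction, at a high level, of the~\cite{BGGK} argument. Read in that spirit, your plan correctly identifies the route -- reduce to the $y$-smooth sine sum via P\'olya's expansion together with the rough-part hypothesis, exploit the identity $\sin(2\pi n/3)=\tfrac{\sqrt3}{2}\chi_{-3}(n)$ at $\alpha=1/3$, and finish by an extremal/pretentious analysis showing $\alpha=1/3$ is essentially the unique maximizer.

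Two substantive comments. First, your handling of the one-sided inequality $m(\psi)\ge\tfrac{e^{-\gamma}\sqrt3}{2}|L(1,\psi\chi_{-3})|$ is needlessly elaborate: the standard inequality \eqref{LowerBoundMLEven}, valid for every primitive even character with no hypotheses at all, already gives it. (Your differencing of the rough-part hypothesis at $\alpha=1/3$ and $\alpha=2/3$ to show $\sum_{P^+(n)>y}\psi\chi_{-3}(n)/n=O(1)$ is correct and \emph{is} needed, but its role is to turn the approximate maximizer $S(1/3)$ into $\tfrac{\sqrt3}{2}L(1,\psi\chi_{-3})+O(1)$ when combining with the upper bound, not to prove the elementary lower bound.) Second, and more seriously, the step you yourself flag as the ``main obstacle'' is where the entire technical content of the result lives, and your proposal leaves it at the level of an assertion. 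Turning ``the $y$-smooth sum nearly saturates $\tfrac{e^\gamma}{\sqrt3}\log y$'' into ``the extremizing $\alpha$ lies near $\pm1/3$ and $\psi$ pretends to be $\chi_{-3}$'' requires three distinct substeps, none of which is supplied: (i) a Montgomery--Vaughan major/minor arc decomposition showing the $y$-smooth exponential sum is small unless $\alpha$ is close to a rational $a/\ell$ with $\ell$ bounded; (ii) the Granville--Soundararajan computation, specific to even characters, that among all such $\ell$ and characters $\xi\bmod\ell$ the attainable constant $e^\gamma\cdot c(\ell,\xi)$ is strictly maximal at $\ell=3$, $\xi=\chi_{-3}$, with value $e^\gamma/\sqrt3$, and that everything else has a strictly smaller constant; and (iii) a repulsion/triangle-inequality argument in the pretentious metric to conclude that once $\psi$ pretends to be $\chi_{-3}$ it cannot simultaneously make $|S(\alpha)|$ large at any $\alpha$ not near $\pm1/3$, so $\max_\alpha|S(\alpha)|=|S(1/3)|+O(\log\log y)$. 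As written your proposal gestures at these but proves none of them, effectively deferring the core of the argument to the same sources the paper itself cites; that is the gap that prevents this from being a self-contained proof.
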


\begin{rem}
Here again we note that $q$ is assumed to be prime in Theorem 2.3 of \cite{BGGK},  but this is only used to show that many non-principal characters $\psi \bmod q$ satisfying the assumptions of Theorem \ref{BGGKEven} exist. Indeed, all of the ingredients of the proof of Theorem \ref{BGGKEven} hold for an arbitrary primitive even character $\psi$ (see Section 8 of \cite{BGGK}), and are derived using the ``pretentious'' theory of character sums developed by Granville and Soundararajan in \cite{GrSo07}, along with estimates of Montgomery and Vaughan \cite{MV77} on exponential sums with multiplicative coefficients.  
\end{rem}


\begin{proof}[Deducing Theorem \ref{MainPositive} from Theorems \ref{KeyResult}, \ref{GrSoThm} and \ref{BGGKEven}]
The lower bound follows immediately from Theorem \ref{GrSoThm} together with the standard inequality (see for example the beginning of Section 4 of \cite{BGGK})
\begin{equation}\label{LowerBoundMLEven}
M(\psi) \geq \frac{\sqrt{3q}}{2\pi} |L(1, \psi\chi_{-3})|,
\end{equation}
which is valid for any primitive even character $\psi$ modulo $q$.

We now prove the upper bound. First, it follows from \eqref{PolyaFourier} that for all fundamental discriminants $0<d<x$ we have
\begin{equation}\label{PolyaEVEN}
m(\chi_d)=  e^{-\gamma}\max_{\alpha \in [0, 1)} \Bigg|\sum_{1\leq n\leq Z} \frac{\chi_d(n) \sin(2\pi n\alpha)}{n}\Bigg|+O(1), 
\end{equation}
since $\chi_d$ is even. 
Let $C_1, C_2>0$ be suitably large constants,  $y=e^{\sqrt{3}\tau+C_1}$, and define $\mathcal{D}^+(x, y)$ to be the set of fundamental discriminants $0<d\leq x$ such that 
$$ m(\chi_d)> \tau \quad \textup{ and } \max_{\alpha\in [0, 1)} \Bigg|\sum_{\substack{1\leq n\leq Z\\P^{+}(n)> y}} \frac{\chi_d(n)e(n\alpha)}{n}\Bigg|\leq 1 -\frac{C_2}{\tau}.$$
By combining the lower bound of Theorem \ref{MainPositive} with Theorem \ref{KeyResult} we deduce that if $C_1$ is suitably large then 
\begin{equation}\label{SubsetPosDiscrim}
\frac{|\mathcal{D}^+(x,y)|}{|\mathcal{F}^+(x)|} = \Psi_x^+(\tau)(1+o(1)).
\end{equation}
Now, let $d\in \mathcal{D}^+(x, y)$. By \eqref{PolyaUniformTail3} and \eqref{PolyaEVEN} together with our assumption on $d$ we have 
\begin{align*}
m(\chi_d)&=  e^{-\gamma}\max_{\alpha \in [0, 1)} \Bigg|\sum_{\substack{1\leq n\leq Z\\P^{+}(n)\leq y }} \frac{\chi_d(n) \sin(2\pi n\alpha)}{n}\Bigg|+O(1)\\
&= e^{-\gamma}\max_{\alpha \in [0, 1)} \Bigg|\sum_{\substack{n\geq 1\\P^{+}(n)\leq y }} \frac{\chi_d(n) \sin(2\pi n\alpha)}{n}\Bigg|+O(1).
\end{align*}
Since $m(\chi_d)>\tau$ we deduce that
$$ \max_{\alpha\in [0, 1)}\Bigg|\sum_{\substack{n\geq 1\\P^{+}(n)\leq y }} \frac{\chi_d(n)e(\alpha n)}{n}\Bigg|\geq e^{\gamma} m(\chi_d)+O(1) > \frac{e^{\gamma}}{\sqrt{3}} \log y -C_3, $$
for some positive constant $C_3$. Moreover, by \eqref{PolyaUniformTail2} and our assumption on $d$ we get 
$$\max_{\alpha\in [0, 1)} \Bigg|\sum_{\substack{n\geq 1\\P^{+}(n)> y}} \frac{\chi_d(n)e(n\alpha)}{n}\Bigg|\leq 1-\frac{C_2}{\tau}+O\left(e^{-\sqrt{\tau}}\right)\leq 1, $$
if $C_2$ is suitably large.
Thus $\chi_d$ satisfies the conditions of Theorem \ref{BGGKEven}, which implies that 
$$ m(\chi_d)= \frac{e^{-\gamma}\sqrt{3}}{2}|L(1, \chi_d\chi_{-3})|+O(\log \tau).$$
Therefore, for all $d\in \mathcal{D}^+(x,y)$ we have 
$$ |L(1, \chi_d\chi_{-3})| > \frac{2e^{\gamma}}{\sqrt{3}} \tau - C_4 \log \tau, $$
for some constant $C_4>0$. Hence, it follows from Theorem \ref{GrSoThm} that 
$$ |\mathcal{D}^+(x,y)| \ll |\mathcal{F}^+(x)|\exp\left(-\frac{e^{\sqrt{3}\tau}}{\tau^{C_5}}\right), $$
for some constant $C_5>0$. Combining this estimate with \eqref{SubsetPosDiscrim} completes the proof.
\end{proof}


\subsection{The family of Legendre symbols: Outline of the proofs of Theorems \ref{MainPrimes} and \ref{MainPositivePrimes}}
In order to prove the upper and lower bounds of Theorems \ref{MainPrimes} and \ref{MainPositivePrimes}, we shall follow the same strategy of the proofs of Theorems \ref{Main} and \ref{MainPositive}. To this end we establish the following key result, which is the analogue of Theorem \ref{KeyResult} for prime discriminants. To shorten our notation, we let $\psi_p$ denote the Legendre symbol modulo $p$ throughout the remaining part of the paper. 
\begin{thm}\label{KeyResultPrimes}
Let $h(n)$ be a completely multiplicative function such that $|h(n)|\leq 1$ for all $n$. Let $x$ be large and put $Z=x^{21/40}$. There exists a constant $c>0$ such that for all real numbers $2\leq y\leq c\log x\log_4 x/(\log_3x)$  and $1/\log y\leq A\leq 4$,  the number of primes   $p\leq x$ such that
$$ \max_{\alpha\in [0, 1)}\Bigg|\sum_{\substack{1\leq n\leq Z\\ P^{+}(n)> y}} \frac{\psi_p(n)h(n) e(n\alpha)}{n}\Bigg|> e^{\gamma} A$$
is 
$$ \ll \pi(x) \exp\left(-\frac{A^2y}{2 \log y} \left(1+O\left(\frac{\log_2 y}{\log y}+ \frac{\log _4x}{A\log_2x\log_3x}\right)\right)\right).$$

\end{thm}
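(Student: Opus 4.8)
The plan is to mimic the proof of Theorem \ref{KeyResult}, replacing the quadratic large sieve inequalities for fundamental discriminants (the "baby" Elliott sieve \eqref{BabyLargeSieve} and the Heath-Brown sieve \eqref{HBLargeSieve}) by the large sieve inequality of Montgomery and Vaughan for prime discriminants (Lemma \ref{LargeSieveMV}), everywhere that a sum over $d\in\mathcal F(x)$ was bounded by a sum over $\chi_d$. First I would record the elementary input: by Chebyshev's inequality, for any even positive integer $2\ell$ the number of primes $p\le x$ with $\bigl|\sum_{N<n\le 2N}\psi_p(n)e(n\alpha)/n\bigr|>V$ (for a single $\alpha$) is at most $V^{-2\ell}$ times the $2\ell$-th moment of that sum over $p\le x$; the passage from a single $\alpha$ to the maximum over $\alpha\in[0,1)$ is handled exactly as in \cite{BGGK}, by a net of $O(N)$ points together with a derivative bound, which only costs a polynomial-in-$N$ factor that is harmless at the end. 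Expanding the $2\ell$-th moment produces $\sum_{n_1,\dots,n_{2\ell}} \psi_p(n_1\cdots n_{2\ell})\cdot(\text{coeffs})\cdot e(\alpha(\pm n_1\cdots\pm n_\ell))$, and summing over $p\le x$ with Lemma \ref{LargeSieveMV} gives a diagonal contribution (when $n_1\cdots n_{2\ell}$ is a perfect square, controlled by the divisor-function count $\ll (n_1\cdots n_{2\ell})^{o(1)}$) plus an off-diagonal error that the Montgomery--Vaughan sieve controls as long as $N^\ell\le x^{1-\epsilon}$, i.e. as long as $\ell=\ell_N$ is chosen with $N^{\ell_N}\asymp x^{1-\epsilon}$.

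Next I would split the sum $\sum_{1\le n\le Z,\,P^+(n)>y}\psi_p(n)h(n)e(n\alpha)/n$ at a threshold $Y=(\log x)^{W(x)}$ with $W(x)=\log_3x/\log_4x$, exactly as in the proof of Theorem \ref{KeyResult}. For the low part $n\le Y$, a single moment computation of order $2k$ with $k\le (\log x)/(3\log Y)$, using the easy version of the sieve (here the Montgomery--Vaughan bound with $N$ replaced by $Y^k\le x^{1/3}$), shows that only the diagonal terms survive, and these reproduce (after restoring the $1/n$ weights, via partial summation and the friable/$P^+(n)>y$ restriction) the Gaussian-type bound $\exp(-A^2y/(2\log y)(1+o(1)))$ with the stated error term $O(\log_2y/\log y)$; this is the step that produces the main term, and it is essentially identical to the corresponding computation in \cite{BGGK} since the diagonal is insensitive to which family we sum over. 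For the high part $Y<n\le Z$, I split into dyadic ranges $N<n\le 2N$ and, in each range, apply the moment bound above with $\ell_N$ chosen so that $N^{\ell_N}\asymp x$; summing the resulting tail probabilities over $O(\log x)$ dyadic ranges and over the net in $\alpha$ shows the high part is $\le (\log x)^{-100}$, say, for all but $\ll \pi(x)\exp(-y/\log y)$ primes $p\le x$ — negligible compared with the main term once $A\le 4$.

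The constraint $y\le c\log x\log_4x/\log_3x$ enters precisely because the low-part moment calculation requires $k$ up to roughly $y/\log y$ (to make $A^2y/\log y$ appear with the sharp constant) while simultaneously needing $Y^k\le x^{1/3}$ with $Y=(\log x)^{W(x)}$; balancing $k\log Y\le \tfrac13\log x$ with $k\asymp y/\log y$ forces exactly $y\ll \log x/(W(x)\log_2x)\asymp \log x\log_4x/(\log_3x\log_2x)$, and a small loss in $W(x)$ recovers the stated range. The second error term $\log_4x/(A\log_2x\log_3x)$ in the theorem is exactly the relative size of the "correction" needed to push $k$ to its maximal allowed value given this range of $y$, i.e. it measures how far $k$ falls short of $y/\log y$ at the top of the range. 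The main obstacle — and the only place where the prime-discriminant case genuinely differs from the fundamental-discriminant case of Theorem \ref{KeyResult} — is that the Montgomery--Vaughan sieve for prime moduli is quantitatively weaker (it has a larger "$N$" threshold, reflecting that character sums over primes are controlled only by Linnik/Bombieri-type density estimates rather than by the cleaner quadratic large sieve); I expect this to cost a slightly smaller admissible range of $y$ (hence of $\tau$) but not to affect the shape of the bound, since in both cases the governing quantity is the diagonal term, which is the same. Concretely, the hard part will be verifying that the off-diagonal terms in the $2\ell_N$-th moment are genuinely dominated by the diagonal for $N$ all the way up to $Z=x^{21/40}$; this is where the exponent $21/40<1/2$ is used, ensuring $N^{\ell_N}\asymp x$ stays within the range where Lemma \ref{LargeSieveMV} beats the trivial bound.
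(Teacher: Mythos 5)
Your outline of the low-range part (splitting at $Y=(\log x)^{W(x)}$, computing $2k$-th moments with $Y^k\le x^{1/3}$ using Lemma~\ref{LargeSieveMV}, and recognising that the constraint $y\ll\log x\log_4 x/\log_3 x$ comes from balancing $k\asymp y/\log y$ against $k\log Y\le\tfrac13\log x$) matches the paper's Proposition~\ref{TailUnsmoothPrimes}, and the heuristic for the two error terms is right. However, there is a genuine gap in your treatment of the high range $Y<n\le Z=x^{21/40}$, and it is exactly where the paper does something different from what you propose.

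You plan to handle the dyadic pieces $N<n\le 2N$ throughout $Y<N\le Z$ by applying the Montgomery--Vaughan sieve with $\ell_N$ chosen so that $N^{\ell_N}\asymp x$ (or $x^{1-\epsilon}$), and you assert that Lemma~\ref{LargeSieveMV} ``controls the off-diagonal terms as long as $N^\ell\le x^{1-\epsilon}$.'' That is not what the lemma says: its hypothesis is $N\le x^{1/3}$, so after expanding a $2\ell$-th moment, the inner sum lives on $[N_1^\ell,N_2^\ell]$, and the lemma is applicable only when $N_2^\ell\le x^{1/3}$, not $\le x^{1-\epsilon}$. For $N>x^{1/3}$ the lemma does not apply for any positive integer $\ell$ (already for $\ell=1$ the inner sum is too long), so your plan collapses on the entire range $x^{1/3}<N\le x^{21/40}$. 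Moreover, the sentence about the exponent $21/40<1/2$ is misplaced: that exponent is tuned to Heath-Brown's sieve for the fundamental-discriminant family, not to Lemma~\ref{LargeSieveMV}, and it provides no help here.

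The paper sidesteps this problem without any new moment calculation: for the high range it \emph{reuses} Propositions~\ref{Tail1} and~\ref{Tail2} (proved for all fundamental discriminants, via Heath-Brown's and Elliott's sieves) by embedding the primes $p\le x$ into $\mathcal{F}(x)$, noting that $p$ or $-p$ is a fundamental discriminant and that $\chi_{\pm p}$ agrees with $\psi_p$. Since the exceptional sets in those propositions have size $O(x^{1-\delta})$ and $O\!\left(x\exp(-\tfrac{\log x\log_4 x}{10\log_2 x})\right)$, which are far smaller than $\pi(x)\exp(-y/(3\log y))$ in the admissible range of $y$, the fundamental-discriminant bounds already give a negligible exceptional set of primes, with no need (and no possibility) to invoke Lemma~\ref{LargeSieveMV} there. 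Only the low range $n\le Y$, where the sharp constant is needed, has to be redone for primes; that is exactly Proposition~\ref{TailUnsmoothPrimes}, which uses Lemma~\ref{LargeSieveMV} in the legitimate range $N^k\le Y^k\le x^{1/3}$. Your proposal is missing this embedding step, and without it the high-range argument does not go through.
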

We end this section by deducing Theorems \ref{MainPrimes} and \ref{MainPositivePrimes} from this result along with Theorem \ref{LamzouriL1chi} on the distribution of large values of $L(1, \psi_p)$. 
\begin{proof}[Proof of Theorem \ref{MainPrimes} assuming Theorems \ref{LamzouriL1chi} and \ref{KeyResultPrimes}]
First, Part 1 of Theorem \ref{MainPrimes} can be derived along the same lines of the proof of the upper bound of Theorem \ref{Main} by replacing Theorem \ref{KeyResult} by Theorem \ref{KeyResultPrimes}. 

Now, the proof of Part 2 follows along the same lines of the proof of the lower bound of Theorem \ref{Main} by replacing Theorems \ref{KeyResult} and \ref{GrSoThm} by Theorem \ref{KeyResultPrimes} and the estimate \eqref{EstimateDistribL1psi} respectively. Finally, the proof of Part 3 follows from \eqref{EstimateDistribL1psi2} together with the lower bound \eqref{LowerBoundML}.
\end{proof}
\begin{proof}[Proof of Theorem \ref{MainPositivePrimes} assuming Theorems \ref{LamzouriL1chi} and \ref{KeyResultPrimes}]
Part 2 and the lower bound of Part 1 of Theorem \ref{MainPositivePrimes} follow immediately from Theorem \ref{LamzouriL1chi} together with the lower bound \eqref{LowerBoundMLEven}. 
Finally, the upper bound of Part 1 can be derived along the same lines of the proof of the upper bound of Theorem \ref{MainPositive} with the choice $y=\tau^2 e^{\sqrt{3}\tau+C_1}$ for some suitably large constant $C_1$, by using Theorem \ref{BGGKEven} and replacing Theorems \ref{KeyResult} and \ref{GrSoThm}  by Theorems \ref{KeyResultPrimes} and \ref{LamzouriL1chi} respectively.  

\end{proof}
\subsection{The plan of the paper} The plan of the paper is as follows. In Section 3 we gather several preliminary results on sums of divisor functions and moments of Random multiplicative functions, which will shall use throughout the paper. Section 4 will be devoted to the proof of Theorem \ref{KeyResult}. Theorem \ref{KeyResultPrimes} will be established in Section 5. In Section 6 we investigate the distribution of large values of $L(1, \psi_p)$ and prove Theorem \ref{LamzouriL1chi}. Finally, in Section 7, we investigate the positivity of sums of Legendre symbols and prove Theorems \ref{PositivityLegendreTheorem} and \ref{NegativityLegendreTheorem}.


\section{Sums of divisor functions and random multiplicative functions}
In this section we gather together several preliminary results concerning sums of divisor functions, which are related to certain moments over random multiplicative functions.  We let $\{\X(n)\}_{n\ge 1}$ be Rademacher random multiplicative functions, that is $\{\X(p)\}_{p \text{ prime}}$ are I.I.D. random variables taking the values $\pm 1$ with equal probability $1/2$, and we extend $\X(n)$ multiplicatively to all positive integers by setting $\X(1)=1$ and $\X(n)= \prod_{p^{\ell} || n} \X(p)^{\ell}$.  We start with the following lemma. 


\begin{lem}\label{DivisorSums} Let $k$ be a large real number. Then for any $0\leq \alpha \leq \log_3 k/(2\log k)$ we have
\begin{equation}\label{Divisor3}
\sum_{n=1}^{\infty}\frac{d_k(n)^2}{n^{2-\alpha}}=\exp\big(O(k\log\log k)\big),
\end{equation}
and 
\begin{equation}\label{Divisor1}
\sum_{n=1}^{\infty}\frac{d_k(n^2)}{n^{2-\alpha}}=\exp\big(O(k\log\log k)\big).
\end{equation}
Moreover, for all $y>k$ we have 
\begin{equation}\label{Divisor2}
\sum_{P^{-}(n)>y}\frac{d_k(n^2)}{n^{2}}= \exp\left(O\left(\frac{k^2}{y\log y}\right)\right),
\end{equation}
where here and throughout $P^{-}(n)$ denotes the smallest prime factor of $n$.
\end{lem}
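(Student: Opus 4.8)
# Proof Proposal for Lemma 3.1

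The plan is to handle the three estimates by reducing everything to Euler products over primes and using the multiplicativity of $d_k$. For \eqref{Divisor3}, I would write $\sum_{n\geq 1} d_k(n)^2/n^{2-\alpha}$ as the Euler product $\prod_p \big(\sum_{j\geq 0} d_k(p^j)^2 p^{-j(2-\alpha)}\big)$, where $d_k(p^j) = \binom{k+j-1}{j}$. For primes $p$ with $p^{2-\alpha} \geq 2$ (all $p\geq 2$ once $\alpha < 1$, which holds in our range), the local factor is $1 + k^2 p^{-(2-\alpha)} + O_k(p^{-2(2-\alpha)})$ after bounding the binomial coefficients crudely by something like $d_k(p^j) \leq (j+1)^{k-1} \leq e^{j}\cdot k^{O(1)}$ or more simply $d_k(p^j)\le \binom{k+j-1}{j}$. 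Taking logarithms, $\log$ of the product is $\sum_p \log(\text{local factor})$, and the dominant contribution comes from small primes $p \leq k$ (say), where the local factor can be as large as $\exp(O(k\log k))$ each; there are $O(k/\log k)$ such primes by the prime number theorem, giving the bound $\exp(O(k\log\log k))$ — actually one needs to be slightly careful: the contribution of a single small prime $p$ is $\log\big(\sum_j \binom{k+j-1}{j}^2 p^{-j(2-\alpha)}\big)$; since $p^{2-\alpha}\ge p^{3/2}$ say, this geometric-type sum is $\exp(O(k))$ per prime for $p$ bounded, and summing over $p\le k$ gives $\exp(O(k\log\log k))$ once one accounts for the $\log\log$ coming from Mertens-type estimates $\sum_{p\le k} 1/p \ll \log\log k$ applied to the tail primes. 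The role of the constraint $\alpha \leq \log_3 k/(2\log k)$ is precisely to guarantee $k^\alpha = \exp(\alpha \log k) \leq \exp(\log_3 k/2) = (\log_2 k)^{1/2} = O(\text{small})$, so that the factors $p^{\alpha}$ for $p$ in the relevant range do not blow up the estimate; this keeps $k^{2\alpha} = \exp(O(\sqrt{\log_2 k}))$ negligible compared to $\exp(k\log\log k)$.

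For \eqref{Divisor1}, the argument is the same but easier: $d_k(n^2)$ is multiplicative with $d_k(p^{2j}) = \binom{k+2j-1}{2j}$, and $\sum_{n\geq 1} d_k(n^2)/n^{2-\alpha} = \prod_p \big(\sum_{j\geq 0} \binom{k+2j-1}{2j} p^{-j(2-\alpha)}\big)$. Since $\binom{k+2j-1}{2j} \leq d_k(p^j)^2$ (or can be bounded comparably), this is dominated by the product in \eqref{Divisor3}, so the same bound $\exp(O(k\log\log k))$ follows; alternatively bound each local factor directly.

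For \eqref{Divisor2}, the key difference is that the sum runs over $n$ with $P^-(n) > y$ and there is no $p^\alpha$-twist, so I would write $\sum_{P^-(n) > y} d_k(n^2)/n^2 = \prod_{p > y}\big(\sum_{j\geq 0} \binom{k+2j-1}{2j} p^{-2j}\big)$. For $p > y > k$, we have $p^{-2} < k^{-2}$, and $\binom{k+2j-1}{2j} \leq (k+2j)^{2j}/(2j)! \leq (3k)^{2j}$ for the leading terms (more carefully, $\binom{k+2j-1}{2j}\le \binom{k+2j}{2j}\le 2^{k+2j}$ suffices, but the $k^{2j}$ bound is sharper for $p>k$), so the local factor is $1 + O(k^2/p^2) + O(k^4/p^4) + \cdots = 1 + O(k^2/p^2)$ since the geometric-type series in $k^2/p^2 < 1$ converges. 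Taking logarithms, $\log$ of the product is $\sum_{p > y} O(k^2/p^2)$, and by partial summation with the prime counting function, $\sum_{p > y} 1/p^2 \ll \int_y^\infty \frac{dt}{t^2\log t} \ll \frac{1}{y\log y}$, giving exactly $\exp(O(k^2/(y\log y)))$ as claimed.

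The main obstacle — or rather the main place where care is needed — is the bookkeeping in \eqref{Divisor3}: making the transition from "each small prime contributes a factor $\exp(O(k))$" to the aggregate bound $\exp(O(k\log\log k))$ requires splitting primes into ranges ($p \leq k$, $k < p \leq y$, $p > y$ — though here $y$ does not appear, so just $p\le k$ versus $p>k$), bounding the binomial coefficients $\binom{k+j-1}{j}$ appropriately in each range (the crude bound $\binom{k+j-1}{j}\le 2^{k+j-1}$ is simplest and enough), and verifying that the twist $p^\alpha$ stays harmless throughout, which is where the hypothesis $\alpha \le \log_3 k/(2\log k)$ is used. None of this is deep, but one must be somewhat careful not to lose a factor that would worsen the exponent beyond $k\log\log k$.
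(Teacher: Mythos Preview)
Your proposal is correct and follows essentially the same route as the paper. For \eqref{Divisor3} the paper simply cites an earlier result (Lemma~3.3 of \cite{La11}) while you sketch the underlying Euler-product argument; your exposition there is a bit muddled --- the local factor at a small prime $p\le k$ is $\exp(O(k/p))$, not $\exp(O(k))$, and it is precisely Mertens' estimate $\sum_{p\le k}1/p\ll\log\log k$ that turns $\sum_{p\le k}\log F_p$ into $O(k\log\log k)$ --- but you do identify this mechanism. For \eqref{Divisor1} both you and the paper reduce to \eqref{Divisor3} via the inequality $d_k(n^2)\le d_k(n)^2$. For \eqref{Divisor2} the paper packages the computation using Rademacher random multiplicative functions, writing
\[
\sum_{P^-(n)>y}\frac{d_k(n^2)}{n^2}=\prod_{p>y}\ex\!\left(\Big(1-\frac{\X(p)}{p}\Big)^{-k}\right)=\prod_{p>y}\frac12\left[\Big(1-\frac1p\Big)^{-k}+\Big(1+\frac1p\Big)^{-k}\right]=\prod_{p>y}\Big(1+O\big(k^2/p^2\big)\Big),
\]
but this expectation is exactly your local factor $\sum_{j\ge 0}\binom{k+2j-1}{2j}p^{-2j}$, so the two arguments are algebraically identical and finish the same way via $\sum_{p>y}1/p^2\ll 1/(y\log y)$.
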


\begin{proof} The bound \eqref{Divisor3} follows from Lemma 3.3 of \cite{La11}.  Furthermore, the estimate \eqref{Divisor1} follows from \eqref{Divisor3} upon noting  that $d_k(n^2)\leq d_k(n)^2.$
We now establish \eqref{Divisor2}. Let $p>y$ be a prime number. Then we have 
\begin{equation}\label{MRandomLPrimes}
\begin{aligned}
\ex\left(\left(1-\frac{\X(p)}{p}\right)^{-k}\right)
&= \frac{1}{2}\left(\left(1-\frac{1}{p}\right)^{-k}+ \left(1+\frac{1}{p}\right)^{-k}\right)\\
& = \frac{1}{2}\left(\exp\left(\frac{k}{p}+ O\left(\frac{k}{p^{2}}\right)\right)+ \exp\left(-\frac{k}{p}+ O\left(\frac{k}{p^{2}}\right)\right)\right)\\
& = 1+O\left(\frac{k^2}{p^{2}}\right).
\end{aligned}
\end{equation} Hence we derive
\begin{align*}
\sum_{P^{-}(n)>y}\frac{d_k(n^2)}{n^{2}} & = \ex\left(\sum_{P^{-}(n)>y}\frac{d_k(n)\X(n)}{n}\right)= \prod_{p>y} \ex\left(\left(1-\frac{\X(p)}{p}\right)^{-k}\right)\\&= \exp\left(O\left(k^2\sum_{p>y} \frac{1}{p^2}\right)\right)= \exp\left(O\left(\frac{k^2}{y\log y}\right)\right),
\end{align*}
as desired. 

\end{proof}

In order to prove Theorems \ref{KeyResult} and \ref{KeyResultPrimes} we need a uniform bound for the moments
\begin{equation}\label{DefinitionMyk} 
\mathcal{M}_{y}(k):=\ex\Bigg(\Bigg|\sum_{\substack{n>1\\ P^{-}(n)>y}} \frac{\X(n)}{n}\Bigg|^{2k}\Bigg).  
\end{equation}
To this end we establish the following result. 
\begin{pro}\label{MomentsUnsmooth}
Let $k\ge 2$ be a positive integer and $(k\log k)/10 <y$ be real numbers. Then we have 
$$
\mathcal{M}_{y}(k) \leq e^{O(k\log\log y/\log y)} \left(\frac{2k}{ey\log y}\right)^k.
$$
\end{pro}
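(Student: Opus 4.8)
The plan is to realize $\mathcal{M}_{y}(k)$ as a high moment of a random Euler product and to extract it from the alternating binomial sum, keeping explicit the cancellation carried by the Rademacher signs. Set $L_p:=(1-\X(p)/p)^{-1}$; the $L_p$ are independent, take values in $\{p/(p-1),\,p/(p+1)\}\subset(0,\infty)$, and
$$\sum_{\substack{n>1\\ P^-(n)>y}}\frac{\X(n)}{n}=\prod_{p>y}L_p-1=:S,$$
a real random variable (the product converges a.s.\ and in $L^{2k}$ by the computation \eqref{MRandomLPrimes}, which also legitimises interchanging $\ex$ with the infinite product below, exactly as in the proof of Lemma~\ref{DivisorSums}). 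Expanding and using independence,
$$\mathcal{M}_{y}(k)=\ex\big(S^{2k}\big)=\sum_{j=0}^{2k}(-1)^{j}\binom{2k}{j}E(j),\qquad E(j):=\prod_{p>y}\ex\big(L_p^{\,j}\big)=\prod_{p>y}\frac{(1-1/p)^{-j}+(1+1/p)^{-j}}{2}.$$
The key observation is that this is the $2k$-th forward difference $\Delta^{2k}E(0)$; dropping the signs and bounding by $\sum_{P^-(\ell)>y}d_{2k}(\ell^{2})/\ell^{2}=e^{O(k^{2}/(y\log y))}$ (Lemma~\ref{DivisorSums}) is far too wasteful here — it exceeds the target by a factor exponential in $k$ — so the cancellation must be retained.

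To evaluate $\Delta^{2k}E(0)$ I would extend $E$ to an entire function. Writing $r_p:=\operatorname{arctanh}(1/p)=\tfrac1p+O(p^{-3})$ and $s_p:=-\tfrac12\log(1-p^{-2})=O(p^{-2})$, one has $\tfrac12\big((1-1/p)^{-z}+(1+1/p)^{-z}\big)=e^{s_pz}\cosh(r_pz)$, so
$$E(z):=e^{(\sum_{p>y}s_p)z}\prod_{p>y}\cosh(r_pz)$$
is entire; set $b_n:=[z^n]E(z)$, and note that near $z=0$ we have $\log E(z)=\big(\sum_{p>y}s_p\big)z+\tfrac{R_2}{2}z^{2}-\tfrac{R_4}{12}z^{4}+\cdots$, where $R_{2\ell}:=\sum_{p>y}r_p^{2\ell}$, with $R_2=\sigma^{2}(1+O(y^{-2}))$, $\sigma^{2}:=\sum_{p>y}p^{-2}=\tfrac{1}{y\log y}(1+O(1/\log y))$ by the prime number theorem, and $\sum_{p}s_p,R_4,R_6,\dots=O(\sigma^{2})$. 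Since $\Delta^{2k}$ annihilates polynomials of degree $<2k$ and $\Delta^{2k}(z^{n})\big|_{0}=(2k)!\,S(n,2k)$ (Stirling numbers of the second kind), we get
$$\mathcal{M}_{y}(k)=\sum_{n\ge2k}b_n\,(2k)!\,S(n,2k).$$

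Next I would estimate the terms. A Cauchy estimate on a circle of radius $\asymp\sqrt{n/R_2}$ (on which the quadratic part of $\log E$ dominates, the linear and quartic-and-higher parts only inflating things by $e^{O(R_2k)}=1+o(1)$) gives $|b_n|\le(1+o(1))\,(R_2/2)^{\lceil n/2\rceil}/\lfloor n/2\rfloor!$ in the relevant range; grouping $n=2m$, the diagonal term is $T_k:=b_{2k}(2k)!=\tfrac{(2k)!}{k!}(R_2/2)^{k}=(2k-1)!!\,R_2^{k}$, which, by Lemma~\ref{DivisorSums}-style bookkeeping, is exactly the ``pairing'' contribution. For the off-diagonal part, using $S(n,j)\le\binom{n}{j}j^{\,n-j}$ one finds
$$\frac{T_{m+1}}{T_m}\le\frac{20\,R_2k^{3}}{\big(2(m-k)+1\big)\big(2(m-k)+2\big)}\qquad(k\le m\le2k),$$
whence $T_{k+s}\le T_k\,(20R_2k^{3})^{s}/(2s)!$; combined with the rapidly decaying tail $m>2k$ (there $T_{m+1}/T_m=O(R_2k^{2}/m)\ll1$), this yields $\sum_{m\ge k}T_m\le T_k\,e^{O(\sqrt{R_2k^{3}})}$ (the odd-index $b_n$ and the lower-order corrections to the even ones only contributing a further $1+o(1)$).

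Finally, Stirling gives $(2k-1)!!=\sqrt2\,(2k/e)^{k}(1+O(1/k))$ and $R_2^{k}=(y\log y)^{-k}e^{O(k/\log y)}$, while the hypothesis $y>(k\log k)/10$ forces $R_2k^{3}=O\big(k^{3}/(y\log y)\big)$ with $k\log y=O\big(y(\log\log y)^{2}\big)$ in this range, so $\sqrt{R_2k^{3}}=O(k\log\log y/\log y)$; assembling everything gives $\mathcal{M}_{y}(k)\le e^{O(k\log\log y/\log y)}\big(2k/(ey\log y)\big)^{k}$. The main obstacle is precisely this last step: when $y$ is only slightly larger than $k\log k$, a single ratio $T_{k+1}/T_k\asymp R_2k^{3}$ is already much bigger than $1$, so the diagonal term does not dominate on its own; what saves the bound is that the ratio at the $s$-th step decays like $R_2k^{3}/s^{2}$, so the full sum stays at $T_k\,e^{O(\sqrt{R_2k^{3}})}$ with $\sqrt{R_2k^{3}}$ inside the permitted error $O(k\log\log y/\log y)$. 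The only other points needing care — the $L^{2k}$-convergence justifying the moment expansion and the entirety of $E$ — are routine.
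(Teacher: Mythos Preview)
Your approach is correct and genuinely different from the paper's. The paper argues probabilistically: it introduces the event $\mathcal{A}=\{|\Y|\le\varepsilon\}$ with $\Y=\sum_{p>y}\X(p)/p$, observes that on $\mathcal{A}$ one has $\prod_{p>y}(1-\X(p)/p)^{-1}-1=e^{\Y}-1+O((y\log y)^{-1})$ and $|e^{\Y}-1|\le e^{\varepsilon}|\Y|$, and then reduces via Minkowski to the $2k$-th moment of $\Y$ itself, which is essentially $(2k-1)!!\,(\sum_{p>y}p^{-2})^{k}$; the complement $\mathcal{A}^{c}$ is disposed of by Cauchy--Schwarz against the crude bound $\mathcal{M}_{y}(2k)=e^{O(k^{2}/(y\log y))}$, and one takes $\varepsilon=\log\log y/\log y$. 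Your route instead expands $(\prod_{p}L_p-1)^{2k}$ binomially, recognises the result as $\Delta^{2k}E(0)$ for the entire function $E(z)=e^{\sigma z}\prod_{p>y}\cosh(r_pz)$, and extracts the same main term $(2k-1)!!\,R_2^{k}$ from the Taylor-coefficient/Stirling-number sum. The paper's argument is shorter and more transparent about where the Gaussian-type moment $(2k-1)!!\,R_2^{k}$ comes from (it is literally $\ex(\Y^{2k})$); your generating-function argument is more computational but self-contained and would adapt more readily to situations where the ``linearise on a good event'' trick is not available.

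One point in your write-up deserves correction. On the circle $|z|=\rho\asymp\sqrt{k/R_2}$ the quartic part of $\log E$ has size $\asymp R_4\rho^{4}\asymp k^{2}(\log y)/y$, which in the extreme case $y\asymp k\log k$ is of order $k$, not $O(R_2k)$ as you state. What actually makes the Cauchy bound go through is that $\cosh(w)\le e^{w^{2}/2}$ for real $w$ (term by term in the power series), so $\max_{|z|=\rho}|E(z)|=E(\rho)\le\exp(\sigma\rho+R_2\rho^{2}/2)$; the quartic and higher corrections are \emph{negative} on the real axis and only help the upper bound. Equivalently, since every Taylor coefficient of $E$ is nonnegative and is majorised by the corresponding coefficient of $e^{\sigma z+R_2z^{2}/2}$, one gets $b_{2m}\le(R_2/2)^{m}/m!\cdot(1+O(\sigma^{2}m/R_2))$ directly, without any saddle-point analysis. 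With this clarification your estimates on $T_m$, the ratio bound $T_{k+s}/T_k\ll(CR_2k^{3})^{s}/(2s)!$, the tail $m>2k$, and the final assembly are all sound.
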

\begin{rem}
A similar bound was obtained by Bober, Goldmakher, Granville and Koukoulopoulos in the case of Steinhaus Random multiplicative functions (see the end of Section 5 of \cite{BGGK}). However, our argument is easier, and can be adapted to recover this case as well.
\end{rem}
\begin{proof}[Proof of Proposition \ref{MomentsUnsmooth}]

By expanding the moment and using that $\ex(\X(n))=1$ if $n$ is a square and equals $0$ otherwise, we get 
\begin{equation}\label{MykExpression}
 \mathcal{M}_{y}(k)= \ex\left(\sum_{P^{-}(n)>y}\frac{\widetilde{d_{2k}}(n)}{n} \X(n)\right)= \sum_{P^{-}(n)>y}\frac{\widetilde{d_{2k}}(n^2)}{n^2},
\end{equation}
where 
$$ \widetilde{d_k}(n):= \left|\left\{(n_1, \dots, n_k) \in \mathbb{N}^k, \textup{ such that } n_j>1 \textup{ for all } j, \text{ and } n_1\cdots n_k= n\right\}\right|.$$
Therefore, it follows from \eqref{Divisor2} that 
\begin{equation}\label{FirstBoundM}
 \mathcal{M}_{y}(k)\leq \sum_{P^{-}(n)>y}\frac{d_{2k}(n^2)}{n^{2}} =\exp\left(O\left(\frac{k^2}{y\log y}\right)\right).
\end{equation}
To obtain a better bound for  $\mathcal{M}_{y}(k)$ we consider the following ``good'' event 
$$\mathcal{A} :=\{ |\Y| \leq  \varepsilon\}, \text{ where } \Y:= \sum_{p>y} \frac{\X(p)}{p}, $$
and $0<\varepsilon<1$ is a small parameter to be chosen. Note that 
$$
\pr(\mathcal{A}^c)\leq \ep^{-2\ell}\ex(|\Y|^{2\ell})\ll  \left(\frac{3\ell}{e \ep^2 y\log y}\right)^{\ell},
$$
since 
\begin{equation}\label{MomentsSumPrime}
\ex(|\Y|^{2\ell})= \sum_{\substack{p_1, \dots, p_{2\ell}>y \\ p_1\cdots p_{2\ell}=\square}}\frac{1}{p_1\dots p_{2\ell}} \leq \frac{(2\ell)!}{2^{\ell} \ell!}\left(\sum_{p>y} \frac{1}{p^2}\right)^{\ell}\ll e^{O(\ell/\log y)} \left(\frac{2\ell}{ey\log y}\right)^{\ell},  
\end{equation} 
which follows from the bounds $(2\ell)!/(2^{\ell} \ell!)\ll (2\ell/e)^{\ell}$ by Stirling's formula, and $\sum_{p>y} 1/p^2= 1/(y\log y)+ O\left(1/(y(\log y)^2)\right)$ by the Prime Number Theorem. 
Choosing $\ell=\lfloor (\ep^2 y\log y)/3 \rfloor$ gives 
\begin{equation}\label{BoundProbComplement}
\pr(\mathcal{A}^c) \ll \exp\left(-\frac{\ep^2 y\log y}{3}\right).
\end{equation}
We now split the moment $\mathcal{M}_{y}(k)$ into two parts 
\begin{equation}\label{SplitM}
\mathcal{M}_{y}(k)= \ex\Bigg(\Bigg|\sum_{\substack{n>1\\ P^{-}(n)>y}}  \frac{\X(n)}{n}\Bigg|^{2k} \cdot \mathbf{1}_{\mathcal{A}}\Bigg)+\ex\Bigg(\Bigg|\sum_{\substack{n>1\\ P^{-}(n)>y}}  \frac{\X(n)}{n}\Bigg|^{2k}\cdot \mathbf{1}_{\mathcal{A}^c}\Bigg),
\end{equation}
where $\mathbf{1}_{\mathcal{B}}$ is the indicator function of the event $\mathcal{B}$. By the Cauchy-Schwarz inequality and the estimates \eqref{FirstBoundM} and \eqref{BoundProbComplement}, the contribution of the second part is 
\begin{equation}\label{ContrSecondPart}\ex\Bigg(\Bigg|\sum_{\substack{n>1\\ P^{-}(n)>y}} \frac{\X(n)}{n}\Bigg|^{2k}\cdot \mathbf{1}_{\mathcal{A}^c}\Bigg) \leq  \mathcal{M}_{y}(2k)^{1/2} \cdot \pr(\mathcal{A}^c)^{1/2} \ll \exp\left(-\frac{\ep^2 y\log y}{6}+ O\left(\frac{k^2}{y\log y}\right)\right).
\end{equation}
We now estimate the contribution of the first part in \eqref{SplitM}. Note that on the event $\mathcal{A}$ we have  $|e^{\Y}-1|\leq e^{\varepsilon} |\Y|$ and 
$$ 
\sum_{\substack{n>1\\ P^{-}(n)>y}}  \frac{\X(n)}{n} = -1+\prod_{p>y}\left(1-\frac{\X(p)}{p}\right)^{-1}= -1+ e^{\Y +O(1/y\log y)}= e^{\Y}-1+ O\left(\frac{1}{y\log y}\right).
$$
Therefore, using Minkowski's inequality and \eqref{MomentsSumPrime} we derive 
$$\ex\Bigg(\Bigg|\sum_{\substack{n>1\\ P^{-}(n)>y}}  \frac{\X(n)}{n}\Bigg|^{2k} \cdot \mathbf{1}_{\mathcal{A}}\Bigg)^{1/2k}\le e^{\ep} \ex(|\Y|^{2k})^{1/2k}+ O\left(\frac{1}{y\log y}\right)\leq e^{\ep+O(1/\log y)} \sqrt{\frac{2k}{ey\log y}}.$$
Choosing $\ep= (\log\log y) /\log y$ and combining this estimate with \eqref{ContrSecondPart} completes the proof. 

\end{proof}


\section{The distribution of the tail in P\'olya's Fourier expansion: Proof of Theorem \ref{KeyResult}}

We start by recording two large sieve inequalities for quadratic characters, the most important of which is due to Heath-Brown \cite{HB}.
\begin{lem}\label{LargeSieve}
Let $x, N\geq 2$. Then for arbitrary complex numbers $a_n$ we have 
\begin{equation}\label{BabyLargeSieve}
\sum_{d\in \Fo} \left|\sum_{n\leq N} a_n \chi_d(n)\right|^2 \ll (x+ N^2 \log N) \sum_{\substack{m, n \leq N\\ mn= \square}} |a_ma_n|,
\end{equation}
and for any $\ep>0$ we have
\begin{equation}\label{HBLargeSieve}
\sum_{d\in \Fo} \left|\sum_{n\leq N} a_n \chi_d(n)\right|^2 \ll_{\ep} (xN)^{\ep}(x+N) \sum_{\substack{m, n \leq N\\ mn= \square}} |a_ma_n|.
\end{equation}
\end{lem}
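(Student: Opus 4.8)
The plan is to deduce both inequalities from the corresponding estimates of Elliott and Heath--Brown by the same initial manipulation. Since each $\chi_d$ is a \emph{real} character, $\chi_d(m)\chi_d(n)=\chi_d(mn)$, so opening the square gives
\[
\sum_{d\in\Fo}\Big|\sum_{n\le N}a_n\chi_d(n)\Big|^2=\sum_{\substack{m,n\le N\\ mn=\square}}a_m\overline{a_n}\sum_{d\in\Fo}\chi_d(mn)\;+\;\sum_{\substack{m,n\le N\\ mn\ne\square}}a_m\overline{a_n}\sum_{d\in\Fo}\chi_d(mn).
\]
In the first (diagonal) sum $|\chi_d(mn)|\le 1$, so using $|\Fo|\ll x$ its contribution is $\ll x\sum_{mn=\square}|a_ma_n|$, which is acceptable for both \eqref{BabyLargeSieve} and \eqref{HBLargeSieve}. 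Everything reduces to bounding the off-diagonal sum, over pairs with $mn$ not a perfect square, where cancellation in the $d$-aspect is available; this is where the two inequalities diverge.

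For \eqref{BabyLargeSieve} the input is the elementary quasi-orthogonality estimate going back to Elliott. Writing $mn=k_1^2k_2$ with $k_2>1$ squarefree (this is exactly where $mn\ne\square$ enters), one has $\chi_d(mn)=\chi_d(k_2)\mathbf 1_{(d,k_1)=1}$, and $d\mapsto\chi_d(k_2)$ is, via quadratic reciprocity, a \emph{non-principal} real Dirichlet character of modulus $O(k_2)=O(N^2)$. Detecting the conditions ``$d$ a fundamental discriminant'' and $(d,k_1)=1$ by M\"obius and congruences, and completing each resulting inner sum over full periods (splitting the $\ell$-sum from the squarefree detection according to whether $x/\ell^2$ or the period is the larger), one gets cancellation. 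Feeding this back into the off-diagonal bilinear form and organising the pairs $(m,n)$ through their squarefree kernels, one converts the two length variables---via Cauchy--Schwarz together with $\sum_{n\le N}|a_n|^2\le\sum_{mn=\square}|a_ma_n|$---into the factor $\sum_{mn=\square}|a_ma_n|$, yielding $\ll N^2(\log N)\sum_{mn=\square}|a_ma_n|$. Combined with the diagonal bound this gives \eqref{BabyLargeSieve}.

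For \eqref{HBLargeSieve}, the sharpening of the length dependence from $N^2$ to $N$ is precisely the content of Heath--Brown's mean value theorem \cite{HB}, whose proof (Poisson summation in the character variable followed by an averaged bound for the ensuing Gauss sums) I would take as a black box. The task on our side is to recast the off-diagonal sum in the form treated there. First, split the fundamental discriminants $d$ into the $O(1)$ shapes ($d$ odd squarefree, $d/4$ odd squarefree, $d/8$ odd squarefree) and into positive and negative ones, so that $\chi_d(\cdot)$ becomes a Jacobi symbol $\big(\tfrac{\cdot}{\,|d_0|\,}\big)$ with $d_0$ odd squarefree, times a fixed character of modulus dividing $8$. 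Second, remove square factors by writing $m=sa^2$, $n=tb^2$ with $s,t$ squarefree, so that $\chi_d(m)=\chi_d(s)\mathbf 1_{(a,d)=1}$ and similarly for $n$; opening the coprimality indicators by M\"obius both funnels the pairs with $mn=\square$ back into the diagonal already bounded above and costs a factor $(xN)^{\varepsilon}$. The remaining bilinear form in the odd squarefree variables is exactly the one estimated by Heath--Brown's inequality, and one concludes $\ll_\varepsilon (xN)^{\varepsilon}(x+N)\sum_{mn=\square}|a_ma_n|$.

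The main obstacle is keeping these reductions lossless. For \eqref{BabyLargeSieve} one must arrange the completion and the squarefree-kernel decomposition so that no power of $N$ or $x$ beyond the stated $\log N$ is lost when aggregating over the pairs $(m,n)$. For \eqref{HBLargeSieve} the delicate point is the bookkeeping around the coprimality conditions $\chi_d(a)^2=\mathbf 1_{(a,d)=1}$ introduced when stripping square factors, which produce both the $mn=\square$ diagonal on the right-hand side and the factor $(xN)^{\varepsilon}$. The genuinely analytic ingredient of \eqref{HBLargeSieve}---the cancellation produced by Poisson summation once the non-square condition is in force---is supplied by \cite{HB} and is not reproved here.
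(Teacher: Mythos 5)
The paper itself gives no proof of this lemma: \eqref{BabyLargeSieve} is simply attributed to Lemma~1 of Baker--Montgomery \cite{BaMo} (tracing back to Elliott \cite{El}), and \eqref{HBLargeSieve} to Corollary~2 of Heath-Brown \cite{HB}. Your sketch therefore supplies more than the paper, and the overall architecture --- open the square, separate the $mn=\square$ diagonal (giving $\ll x\sum_{mn=\square}|a_m a_n|$), then invoke P\'olya--Vinogradov or Heath-Brown on the off-diagonal, and finally convert via $(\sum_n |a_n|)^2\le N\sum_n |a_n|^2\le N\sum_{mn=\square}|a_m a_n|$ --- is the right one. For \eqref{HBLargeSieve} your description of taking Heath-Brown's bilinear estimate as a black box and passing from fundamental discriminants to odd squarefree moduli (at a cost of $(xN)^\varepsilon$) is exactly the standard reduction.

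For \eqref{BabyLargeSieve}, however, your proposed treatment of the constraint ``$d$ a fundamental discriminant'' introduces a genuine gap. Detecting squarefreeness of $d$ by M\"obius and then optimising the resulting $\ell$-sum (trivial bound $x/\ell^2$ against P\'olya--Vinogradov $\sqrt{mn}\log mn$) gives for the inner $d$-sum a bound of shape $\sqrt{x}\,(mn)^{1/4}(\log mn)^{1/2}$ rather than $\sqrt{mn}\log mn$; fed through Cauchy--Schwarz this yields an off-diagonal bound $\ll \sqrt{x}\,N^{3/2}(\log N)^{1/2}\sum_n|a_n|^2$, which at $N\asymp\sqrt{x}$ is of size $\asymp x^{5/4}$ and exceeds the stated $x+N^2\log N\asymp x\log x$. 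The point --- and what makes \eqref{BabyLargeSieve} genuinely ``straightforward'' --- is to \emph{not} detect fundamental discriminants at all: since each $\bigl|\sum_n a_n\chi_d(n)\bigr|^2\ge 0$, bound $\sum_{d\in\Fo}$ from above by $\sum_{|d|\le x,\, d\equiv 0,1\,(4)}$ and only then expand the square. Writing $mn=k_1^2k_2$ with $k_2>1$ squarefree, a short M\"obius sum over $e\mid k_1$ (for the coprimality $(d,k_1)=1$) leaves a complete non-principal character sum of conductor $\ll k_2$ over a long interval, so P\'olya--Vinogradov gives $\ll d(k_1)\sqrt{k_2}\log k_2$ with no $\sqrt{x}$ loss; since $d(k_1)\le k_1$ one has $d(k_1)\sqrt{k_2}\le\sqrt{mn}\le N$, and the off-diagonal becomes $\ll N\log N\cdot(\sum_n|a_n|)^2\le N^2\log N\sum_{mn=\square}|a_m a_n|$, giving \eqref{BabyLargeSieve}.
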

\begin{proof} The first inequality is standard and is a straightforward application of the P\'olya-Vinogradov inequality. It can be found for example in Lemma 1 of \cite{BaMo}, and can be traced back to the work of Elliott \cite{El}. The second inequality, which is deeper, was established by Heath-Brown (see Corollary 2 of \cite{HB}).
\end{proof}
Let $Y:= \exp(C\log_2 x \log _3 x/\log_4 x)$, where $C>0$ is a suitably large constant, and $Y\leq N\leq x^{21/40}$ be a real number. Using Lemma \ref{LargeSieve}, we shall first prove that for all fundamental discriminants $|d|\leq x$ except for a small exceptional set $\mathcal{E}(x)$, the quantity 
\begin{equation}\label{KeyQuantity}
\max_{\alpha\in [0, 1)} \Bigg|\sum_{N\leq n\leq 2N}\frac{\chi_d(n)a_ne(n\alpha)}{n}\Bigg|
\end{equation}
is small, where $\{a_n\}_{n\geq 1}$ is an arbitrary sequence of complex numbers such that $|a_n|\leq 1$ for all $n$.  We shall use Heath-Brown's large sieve \eqref{HBLargeSieve} if $N$ is in the range $x^{\ep}\leq N\leq x^{21/40}$, and Elliott's large sieve \eqref{BabyLargeSieve} in the remaining range $Y \leq N\leq x^{\ep}$.
\begin{pro}\label{Tail1} Let $A, \ep>0$ be fixed and put $\delta=1/100$. Let $\{a_n\}_{n\geq 1}$ be an arbitrary sequence of complex numbers such that $|a_n|\leq 1$ for all $n$.  Let $N_1, N_2$ be real numbers such that $x^{\ep} \leq N_1 < N_2\leq 2N_1\leq  x^{21/40}$ be real numbers. Then there are at most $O_{\ep, A}(x^{1-\delta})$ fundamental discriminants $|d|\leq x$ such that 
$$
\max_{\alpha\in [0, 1)}\left|\sum_{N_1\leq n\leq N_2} \frac{\chi_d(n)a_n e(\alpha n)}{n}\right|\geq \frac{1}{(\log N_1)^A}.
$$
\end{pro}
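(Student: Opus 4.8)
The plan is to reduce the supremum over $\alpha\in[0,1)$ to a finite maximum over a fine grid, estimate the resulting discrete moment by Heath-Brown's quadratic large sieve \eqref{HBLargeSieve}, and control the diagonal term $mn=\square$ using the divisor-sum bounds from Lemma \ref{DivisorSums}. First I would handle the $\alpha$-dependence: since $n\leq N_2\leq x^{21/40}$, the function $\alpha\mapsto \sum_{N_1\leq n\leq N_2} \chi_d(n)a_n e(\alpha n)/n$ is Lipschitz in $\alpha$ with constant $O\big(\sum_{n\leq N_2}1\big)=O(x)$, so replacing the supremum by a maximum over a grid of $x^{2}$ (say) equally spaced points $\alpha_j$ costs only $O(x^{-1})$, which is negligible. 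Hence it suffices to bound the number of $d$ for which $\max_j |\sum_{N_1\leq n\leq N_2}\chi_d(n)a_ne(\alpha_j n)/n|\geq (\log N_1)^{-A}$.

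Next I would take a high moment. For an integer $k\geq 1$, the count of such $d$ is at most
$$
(\log N_1)^{2Ak}\sum_{j}\sum_{d\in\Fo}\left|\sum_{N_1\leq n\leq N_2}\frac{\chi_d(n)a_ne(\alpha_j n)}{n}\right|^{2k}.
$$
Writing $|\sum a_n\chi_d(n)e(\alpha_j n)/n|^{2k}$ as $|\sum_{N_1^k\leq m\leq N_2^k} b_m(\alpha_j)\chi_d(m)|^2$ with $b_m(\alpha_j)=\sum_{n_1\cdots n_k=m}\prod a_{n_i}e(\alpha_j n_i)/\prod n_i$ and $|b_m(\alpha_j)|\leq d_k(m)/m$, I apply \eqref{HBLargeSieve} with $N=N_2^k\leq x^{21k/40}$. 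This gives
$$
\sum_{d\in\Fo}\left|\sum_{N_1\leq n\leq N_2}\frac{\chi_d(n)a_ne(\alpha_j n)}{n}\right|^{2k}\ll_\ep (xN_2^k)^{\ep}(x+N_2^k)\sum_{\substack{m,m'\\ mm'=\square}}\frac{d_k(m)d_k(m')}{mm'},
$$
and the diagonal sum is $\leq \big(\sum_{n} d_k(n)^2/n^{2-\alpha'}\big)$-type quantity controlled by \eqref{Divisor3}, i.e. $\exp(O(k\log\log k))$; one can even introduce a small $n^{\alpha'}$ saving to absorb the $(xN_2^k)^\ep$ factor by choosing $\alpha'\asymp \log_3k/\log k$. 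The key point is the range constraint: I will choose $k$ as the largest integer with $N_2^k\leq x^{39/40}$, i.e. $k\asymp \log x/\log N_2\geq \log x/\log N_1$; since $N_1\geq x^\ep$, we have $k\asymp 1/\ep$, a bounded quantity, so $x+N_2^k\ll x$ and $\exp(O(k\log\log k))=O_\ep(1)$.

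Assembling these bounds, the count is
$$
\ll_{\ep,A} (\log N_1)^{2Ak}\cdot x^{2}\cdot x^{\ep/2}\cdot x^{1-1/40}\cdot O_\ep(1),
$$
wait — I must be more careful with the grid size: I should take the grid to have only $N_2^{O(k)}\ll x$ points rather than $x^2$, or more simply observe that since $k$ is bounded the whole exponential moment argument costs a fixed power of $x$ below $1$. Concretely, with $k\asymp 1/\ep$ fixed and the grid of $O(x)$ points, the total is $\ll_{\ep,A} x\cdot x^{\ep/2}\cdot x\cdot x^{-k\cdot(1/40)}(\log x)^{O(1)}$; choosing $\ep$ small forces $k$ large enough that $k/40 > 2+\delta$, giving the bound $O_{\ep,A}(x^{1-\delta})$ after adjusting constants. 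The main obstacle is bookkeeping the interplay between the three competing powers of $x$ — the $(xN)^\ep$ loss in \eqref{HBLargeSieve}, the grid discretization cost, and the saving $x^{-k/40}$ from the high moment — and verifying that for every fixed $\ep>0$ one can pick $k=k(\ep)$ making the net exponent strictly below $1-\delta$; this works precisely because $N_1\geq x^\ep$ bounds $k$ below by $c/\ep$ while keeping $N_2^k$ comfortably below $x$, so the $\exp(O(k\log\log k))$ divisor factor never escapes the $O_\ep(1)$ regime.
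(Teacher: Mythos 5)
Your high-level strategy --- discretise $\alpha$, take a $2k$-th moment, apply Heath-Brown's quadratic large sieve \eqref{HBLargeSieve}, and use a Rankin-type device on the diagonal --- is exactly what the paper does, but the quantitative bookkeeping has two genuine gaps that prevent the argument from closing. First, the grid. A grid of $O(x)$ points is far too many: after applying the large sieve to each fixed $\alpha_j$ and summing over $j$, the bound already carries a factor $\gg x\cdot(x+N_2^k)\gg x^2$ before any diagonal saving, which you cannot hope to push below $x^{1-\delta}$. The paper's Lemma \ref{Reduction} uses $R=N_1^{1+\delta}\leq x^{21(1+\delta)/40}\approx x^{0.53}$ grid points; the discretisation error, once raised to the $2k$-th power, is $O(x\cdot 4^{2k}N_1^{-2k\delta})$, and the grid only contributes a factor $R\leq x^{0.53}$ to the main term. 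Second, and more fundamentally, you drop the crucial constraint $m,m'\geq N_1^k$ when you bound the diagonal: you write the $mm'=\square$ sum as a ``$\sum_n d_k(n)^2/n^{2-\alpha'}$-type quantity'' of size $\exp(O(k\log\log k))=O_\ep(1)$, but a bounded diagonal gives no power saving at all and the moment is then $\gg x$. The paper keeps the constraint, writes $mm'=n^2$ with $n\geq N_1^k$, and applies Rankin with a fixed small $\nu$ to get a power saving $N_1^{-k(1-\nu)}$; the key arithmetic observation (your proposal is silent on it) is that the choice $k=\lfloor\log x/\log N_1\rfloor$ (or $k=2$ when $N_1\geq\sqrt x$) with $k\geq2$ forces $N_1^{k+1}\geq x$, hence $N_1^{3k/2}\geq x$ and $N_1^k\geq x^{2/3}$, so the diagonal really does save $\approx x^{-2/3}$. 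That $x^{-2/3}$ against $R\cdot(x+N_2^k)\ll x^{0.53}\cdot x^{21/20}$ is what gives $\ll x^{0.91}<x^{1-\delta}$.

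Two further points. Your mystery factor $x^{-k/40}$ is never derived, and the final step ``choosing $\ep$ small forces $k$ large'' is not available: $\ep$ is a fixed parameter of the statement, and when $N_1$ is near $x^{21/40}$ the corresponding $k$ is as small as $2$, so you cannot manufacture a large $k$. Also, your proposed Rankin exponent $\alpha'\asymp\log_3 k/\log k$ makes no sense for bounded $k$ (e.g.\ $k=2$); the paper only uses that choice, together with the full strength of Lemma \ref{DivisorSums}, in Proposition \ref{Tail2} where $k$ is allowed to be large. Here a fixed small $\nu>0$ suffices because $k=O_\ep(1)$.
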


\begin{pro}\label{Tail2} Let $A>0$ be a fixed constant. Let $\{a_n\}_{n\geq 1}$ be an arbitrary sequence of complex numbers such that $|a_n|\leq 1$ for all $n$. There exist positive constants $\ep$ and $C$ (which depend at most on $A$) such that for all real numbers $N_1, N_2$ verifying  

\noindent $ \exp(C\log_2 x \log _3 x/\log_4 x)\leq N_1 < N_2\leq 2N_1\leq x^{\ep}$, the number of fundamental discriminants $|d|\leq x$ such that 
$$ 
\max_{\alpha\in [0, 1)}\left|\sum_{N_1\leq n\leq N_2} \frac{\chi_d(n)a_n e(\alpha n)}{n}\right|\geq \frac{1}{(\log N_1)^A},
$$
is 
$$ \ll x\exp\left(-\frac{\log x \log_4 x}{10 \log_2 x}\right).$$
\end{pro}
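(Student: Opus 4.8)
The plan is to prove Proposition \ref{Tail2} by reducing the supremum over $\alpha$ to a finite set, then bounding a high moment of the resulting sum via Elliott's large sieve \eqref{BabyLargeSieve}. First I would discretize: for the dyadic block $N_1\leq n\leq N_2\leq 2N_1$, the function $\alpha\mapsto \sum_{N_1\leq n\leq N_2}\chi_d(n)a_ne(\alpha n)/n$ has derivative (in $\alpha$) bounded by $2\pi\sum_{N_1\leq n\leq N_2}|a_n|/1\cdot n\cdot (1/n)\ll N_1$, wait — more precisely by $2\pi\sum_{n\leq N_2}|a_n|\ll N_2\ll N_1$, so sampling $\alpha$ on a grid of spacing $\asymp (\log N_1)^{-A}/N_1$ changes the sum by at most $\tfrac12(\log N_1)^{-A}$. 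Hence it suffices to control a maximum over a set $\mathcal{T}$ of $\ll N_1(\log N_1)^A\ll x^{2\ep}$ points $\alpha$. (In fact I would be a little more careful and note that one may replace $e(\alpha n)$ by a unimodular coefficient $b_n=b_n(\alpha)$ and absorb it into $a_n$, so the large-sieve inequality applies directly.)

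Next, fix $\alpha\in\mathcal{T}$ and apply the large sieve to the sum $S_d(\alpha):=\sum_{N_1\leq n\leq N_2}\chi_d(n)a_ne(\alpha n)/n$. Take the $2k$-th moment over $d\in\Fo$: writing $c_n=a_ne(\alpha n)/n$ (so $|c_n|\leq 1/N_1$ on the block) and expanding $|S_d(\alpha)|^{2k}=|\sum_n\chi_d(n)c_n|^{2k}=|\sum_m\chi_d(m)C_m|^2$ with $C_m=\sum_{n_1\cdots n_k=m}c_{n_1}\cdots c_{n_k}$ supported on $N_1^k\leq m\leq N_2^k\leq (2N_1)^k$, inequality \eqref{BabyLargeSieve} gives
$$\sum_{d\in\Fo}|S_d(\alpha)|^{2k}\ll \big(x+(2N_1)^{2k}\log((2N_1)^k)\big)\sum_{\substack{m,m'\leq (2N_1)^k\\ mm'=\square}}|C_mC_{m'}|.$$
The off-diagonal sum is bounded (using $d_k$ arithmetic) by $N_1^{-2k}$ times a sum of the shape $\sum_{\ell_1\cdots\ell_k\ell_1'\cdots\ell_k'=\square}1$ over $N_1\leq\ell_i,\ell_i'\leq 2N_1$, which is $\ll N_1^{2k-1}\cdot (2N_1)\cdot(\text{something like }d_{2k}\text{-type factors})$ — more cleanly, one estimates it as $e^{O(k)}N_1^{-1}$ by the standard argument that pairing $mm'=\square$ forces all but $O(1)$ of the $2k$ variables, giving roughly $N_1^{2k-1}$ choices, divided by $N_1^{2k}$. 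Choosing $k\asymp \log x/\log N_1$ so that $(2N_1)^{2k}\ll x^{1-\delta}$ is comfortably beaten by $x$, the bound becomes $\sum_d|S_d(\alpha)|^{2k}\ll x\, e^{O(k)} N_1^{-1}$.

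Then I sum over $\alpha\in\mathcal{T}$ and apply Markov: the number of $d$ with $\max_{\alpha\in\mathcal{T}}|S_d(\alpha)|\geq \tfrac12(\log N_1)^{-A}$ is
$$\ll (2(\log N_1)^A)^{2k}\sum_{\alpha\in\mathcal{T}}\sum_{d\in\Fo}|S_d(\alpha)|^{2k}\ll |\mathcal{T}|\cdot (\log N_1)^{2Ak}\cdot x\,e^{O(k)}N_1^{-1}.$$
Now $|\mathcal{T}|N_1^{-1}\ll (\log N_1)^A$, so the whole thing is $\ll x\exp\big(O(k)+2Ak\log_2 N_1+A\log_2 N_1-\log N_1\big)$. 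Since $N_1\geq \exp(C\log_2 x\log_3 x/\log_4 x)$, we have $\log N_1\geq C\log_2 x\log_3 x/\log_4 x$ while $\log_2 N_1\ll \log_3 x$; thus $k\log_2 N_1\asymp (\log x/\log N_1)\log_3 x\ll \log x\log_4 x/(C\log_2 x)$, and for $C$ large this is dominated by $\tfrac12\log N_1\geq$ — no, one should track more carefully: I want the exponent $\leq -\tfrac{\log x\log_4 x}{10\log_2 x}$. Taking $k=\lfloor \log x/\log N_1\rfloor$ gives $\log N_1\geq \log x/(k+1)$, and the dominant saving $-\log N_1$ together with the loss $O(Ak\log_3 x)$ yields, after choosing $C=C(A)$ large enough, an exponent $\leq -c\log x\log_4 x/\log_2 x$ with $c$ as required. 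The main obstacle — and the step needing genuine care — is the off-diagonal estimate $\sum_{mm'=\square}|C_mC_{m'}|$: one must show it is $e^{O(k)}/N_1$ (not merely $e^{O(k\log k)}$ or with an extra $N_1^{\ep k}$), because any power of $N_1$ lost per unit of $k$ would, after raising to the power $k\asymp\log x/\log N_1$, destroy the saving; this requires using that the variables $\ell_i\in[N_1,2N_1]$ are confined to a short dyadic block so the square condition is extremely restrictive, rather than applying a crude divisor bound.
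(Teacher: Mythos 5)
Your overall framework (discretize $\alpha$, take a $2k$-th moment with $k\asymp\log x/\log N_1$, and apply Elliott's large sieve \eqref{BabyLargeSieve}) is the same as the paper's. But there is a genuine gap at exactly the step you flagged as ``the main obstacle,'' and the claimed off-diagonal bound is not only unproven but also, even if true, insufficient.

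To see the insufficiency, chase the arithmetic through. You claim
$\sum_{mm'=\square}|C_mC_{m'}|\ll e^{O(k)}N_1^{-1}$, which gives
$\sum_d|S_d(\alpha)|^{2k}\ll x\,e^{O(k)}N_1^{-1}$ for each grid point.
But the union bound over the grid $\mathcal{T}$ then costs a factor $|\mathcal{T}|\asymp N_1(\log N_1)^A$, so the final bound is
$\ll x\,(\log N_1)^{2Ak+A}e^{O(k)}$: the $N_1^{-1}$ has been entirely absorbed by $|\mathcal{T}|$, and the remaining exponent $O(k)+ (2Ak+A)\log_2 N_1$ consists \emph{only of losses} — there is no $-\log N_1$ term left, contrary to what you wrote in the penultimate sentence. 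Even if there were a $-\log N_1$ saving, it would be hopeless: at the bottom of the allowed range $\log N_1\asymp C\log_2x\,\log_3x/\log_4x$, whereas the loss $2Ak\log_2N_1\asymp A\log x\,\log_4 x/(C\log_2x)$ and the target $\log x\,\log_4 x/(10\log_2 x)$ are both \emph{much} larger than $\log N_1$. So a per-variable saving of one power of $N_1$, i.e.\ $\exp(-\log N_1)$, cannot produce a bound of the shape $x\exp(-\log x\,\log_4 x/(10\log_2x))$.

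The paper instead extracts a saving that scales like $\exp(-c\,k\log N_1\cdot\log_4 x/\log_2 x)\asymp\exp(-c'\log x\,\log_4 x/\log_2 x)$, which is what is actually needed. Concretely, after the large sieve one bounds the off-diagonal by $\sum_{n\ge N_1^k}d_{2k}(n^2)/n^2$, then applies Rankin's trick with the carefully chosen shift $\nu=\log_3 k/(2\log k)$ together with the divisor-sum bound \eqref{Divisor1} of Lemma~\ref{DivisorSums}, namely $\sum_n d_{2k}(n^2)/n^{2-\nu}=\exp(O(k\log_2 k))$ for $\nu\le \log_3 k/(2\log k)$. This gives
$\sum_{n\ge N_1^k}d_{2k}(n^2)/n^2\ll\exp\big(-k\nu\log N_1+O(k\log_2 k)\big)$,
and here $k\nu=k\log_3 k/(2\log k)\to\infty$, so the saving is a high power of $N_1$, not a single power. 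With $k\log N_1\asymp\log x$, $\log k\asymp\log_2 x$, $\log_3 k\asymp\log_4 x$, this saving is $\asymp\log x\,\log_4 x/\log_2 x$, while the discretization loss $2\log N_1$ and the Markov/error losses $O(k\log_2 k+Ak\log_2 N_1)\asymp\log x\,\log_4 x/(C\log_2 x)$ are controllable by choosing $C=C(A)$ large. So the key idea you are missing is the Rankin shift (and the accompanying estimate \eqref{Divisor1}), not a pair-counting bound over short dyadic blocks — the latter, besides being insufficient, would also require a nontrivial argument that you did not supply.
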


In order to prove these results we shall bound suitable moments of \eqref{KeyQuantity}. We first start with the following easy lemma which reduces the problem of bounding these moments to a setting where we can apply the large sieve.
\begin{lem}\label{Reduction}
Let $\{a_n\}_{n\geq 1}$ be an arbitrary sequence of complex numbers such that $|a_n|\leq 1$ for all $n$. Let $\mathcal{D}$ be a set of Dirichlet characters, and $2\leq N_1 <N_2\leq R$ be real numbers. Define 
$\mathcal{A}= \{b/R : 1\leq b\leq R\}.$ Then for any positive integer $k\geq 1$ we have 
\begin{align*}
&\Bigg(\sum_{\chi\in \mathcal{D}} \max_{\alpha\in [0,1)} \Bigg|\sum_{N_1\leq  n\leq N_2} \frac{\chi(n) a_n e(\alpha n)}{n}\Bigg|^{2k}\Bigg)^{1/2k} \\
& \leq \Bigg(\sum_{\alpha \in \mathcal{A}}  \sum_{\chi\in \mathcal{D}} \Bigg|\sum_{N_1^k\leq n\leq N_2^k}\frac{\chi(n)g_{N_1, N_2, k}(n, \alpha)}{n}\Bigg|^{2}\Bigg)^{1/2k}+ O\left(|\mathcal{D}|^{1/2k} N_2R^{-1}\right),
\end{align*}
where 
\begin{equation}\label{Defgn}
g_{N_1, N_2, k}(n, \alpha):= \sum_{\substack{N_1\leq n_1, \dots, n_k \leq N_2\\ n_1\cdots n_k= n}}\prod_{j=1}^k a_{n_j}e(\alpha n_j),
\end{equation}
and the implicit constant in the error term is absolute. 
\end{lem}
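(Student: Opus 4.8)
The plan is to reduce the maximum over $\alpha\in[0,1)$ to a maximum over a finite, suitably fine set of rationals $\mathcal{A}=\{b/R:1\le b\le R\}$, and then to pass from a $2k$-th moment of the sum over $[N_1,N_2]$ to a second moment of an associated Dirichlet polynomial of length roughly $N_1^k$ to $N_2^k$, which is the shape needed to apply the quadratic large sieve in Lemma \ref{LargeSieve}. So first I would fix $\alpha\in[0,1)$ and approximate it by the nearest point $\alpha'=b/R\in\mathcal{A}$, so that $|\alpha-\alpha'|\le 1/R$. For each $n$ with $N_1\le n\le N_2$ we have $|e(\alpha n)-e(\alpha' n)|\le 2\pi|\alpha-\alpha'|n\le 2\pi N_2/R$, and since $|a_n|/n\le 1$ and there are at most $N_2$ terms, the total error incurred by replacing $e(\alpha n)$ with $e(\alpha' n)$ in the inner sum is $\ll N_2^2/R$. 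Hmm — one should be slightly careful: summing $|a_n|/n\cdot n\cdot(N_2/R)$ over $n\le N_2$ gives $\ll N_2^2/R$, but the cleaner bound uses $\sum_{n\le N_2}1/n \cdot (N_2/R)\cdot$(something); in any case after taking $2k$-th powers, summing over $\chi\in\mathcal{D}$, and extracting a $2k$-th root, Minkowski's inequality turns this uniform error into the stated $O(|\mathcal{D}|^{1/2k}N_2R^{-1})$ term (the constant being absolute since the approximation error is absolute and uniform in $\chi$ and $\alpha$).

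The second and main step is to handle the discretized quantity $\sum_{\chi\in\mathcal{D}}\sum_{\alpha\in\mathcal{A}}\big|\sum_{N_1\le n\le N_2}\chi(n)a_ne(\alpha n)/n\big|^{2k}$. Here I would expand the $2k$-th power as a $k$-fold product times its conjugate $k$-fold product. Writing the inner sum as $S_\chi(\alpha)=\sum_{N_1\le n\le N_2}\chi(n)a_ne(\alpha n)/n$, complete multiplicativity of $\chi$ gives
$$
S_\chi(\alpha)^k=\sum_{N_1^k\le m\le N_2^k}\frac{\chi(m)}{m}\sum_{\substack{N_1\le n_1,\dots,n_k\le N_2\\ n_1\cdots n_k=m}}\prod_{j=1}^k a_{n_j}e(\alpha n_j)=\sum_{N_1^k\le m\le N_2^k}\frac{\chi(m)\,g_{N_1,N_2,k}(m,\alpha)}{m},
$$
using that $N_1^k\le n_1\cdots n_k\le N_2^k$ and that $\chi(n_1\cdots n_k)=\chi(m)$ and $1/(n_1\cdots n_k)=1/m$. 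Thus $|S_\chi(\alpha)|^{2k}=|S_\chi(\alpha)^k|^2=\big|\sum_{N_1^k\le m\le N_2^k}\chi(m)g_{N_1,N_2,k}(m,\alpha)/m\big|^2$, and summing over $\chi$ and $\alpha$ gives exactly the main term on the right-hand side of the claimed inequality (before taking the $2k$-th root). Then $\big(\sum_{\chi\in\mathcal{D}}\max_\alpha|S_\chi(\alpha)|^{2k}\big)^{1/2k}\le\big(\sum_{\chi}|S_\chi(\alpha')|^{2k}\big)^{1/2k}+(\text{error})^{1/2k}\le\big(\sum_\alpha\sum_\chi|S_\chi(\alpha)|^{2k}\big)^{1/2k}+O(|\mathcal{D}|^{1/2k}N_2/R)$, where in the first step I bounded $\max_\alpha|S_\chi(\alpha)|$ by $|S_\chi(\alpha')|$ plus the approximation error for the maximizing $\alpha$, applied Minkowski, and in the second step bounded the single value $|S_\chi(\alpha')|^{2k}$ by the full sum over $\alpha\in\mathcal{A}$ (legitimate since all terms are nonnegative). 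This is precisely the asserted bound.

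I do not anticipate a serious obstacle here — this is a standard and essentially bookkeeping lemma — but the one point requiring a little care is the approximation-error step: one must make sure the error is controlled \emph{uniformly} in $\chi$ and in the maximizing $\alpha$ before invoking Minkowski's inequality on the $2k$-norm, so that the $O(\cdot)$ term comes out with an absolute constant rather than one depending on $k$ or on $|\mathcal{D}|$ in a bad way. Concretely, for each $\chi$ let $\alpha_\chi$ be a point attaining (or nearly attaining) the maximum, let $\alpha'_\chi\in\mathcal{A}$ be its nearest rational; then $\big|\max_\alpha|S_\chi(\alpha)|-|S_\chi(\alpha'_\chi)|\big|\le|S_\chi(\alpha_\chi)-S_\chi(\alpha'_\chi)|\le C N_2\|\alpha_\chi-\alpha'_\chi\|\le CN_2/R$ with $C$ absolute, and the triangle inequality in $\ell^{2k}(\mathcal{D})$ finishes it. The rest — interchanging the order of summation to introduce $g_{N_1,N_2,k}$, and noting $g$ is supported on $[N_1^k,N_2^k]$ — is immediate from the definitions.
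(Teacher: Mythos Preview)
Your proposal is correct and follows essentially the same approach as the paper: approximate $\alpha$ by the nearest point of $\mathcal{A}$ (incurring an $O(N_2/R)$ error per character, which becomes $O(|\mathcal{D}|^{1/2k}N_2/R)$ after Minkowski), replace the max over $\alpha\in\mathcal{A}$ by the sum, and rewrite $S_\chi(\alpha)^k$ as a Dirichlet polynomial with coefficients $g_{N_1,N_2,k}(n,\alpha)$. Your initial error computation is a bit muddled (the $N_2^2/R$ bound is wasteful), but your cleaned-up version at the end, $|S_\chi(\alpha_\chi)-S_\chi(\alpha'_\chi)|\le CN_2/R$, is exactly the paper's estimate and suffices.
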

\begin{proof}
First, we observe that for all $\alpha\in [0, 1)$ there exists $\beta_{\alpha}\in \mathcal{A}$ such that $|\alpha-\beta_{\alpha}|\leq 1/R.$
In this case we have $e(\alpha n)= e(\beta_{\alpha} n)+ O(n/R),$
 and hence 
$$\max_{\alpha\in [0,1)} \Bigg|\sum_{N_1\leq n\leq N_2} \frac{\chi(n)a_n e(\alpha n)}{n}\Bigg| = \max_{\beta\in \mathcal{A}} \Bigg|\sum_{N_1\leq n\leq N_2} \frac{\chi(n) a_ne(\beta n)}{n}\Bigg| +O\left(\frac{N_2}{R}\right). 
 $$
 Therefore, it follows from Minkowski's inequality that 
 \begin{equation}\label{eqTail1}
 \begin{aligned}
 \Bigg(\sum_{\chi \in \mathcal{D}} \max_{\alpha\in [0,1)} \Bigg|\sum_{N_1\leq n\leq N_2} \frac{\chi(n) a_ne(\alpha n)}{n}\Bigg|^{2k}\Bigg)^{1/2k}
 & \leq  \Bigg(\sum_{\chi \in \mathcal{D}} \max_{\alpha \in \mathcal{A}} \Bigg|\sum_{N_1\leq n\leq N_2} \frac{\chi(n) a_ne(\alpha n)}{n}\Bigg|^{2k}\Bigg)^{1/2k}\\
 & \quad \quad + O\left(|\mathcal{D}|^{1/2k} N_2R^{-1}\right)\\
 & \leq  \Bigg(\sum_{\alpha \in \mathcal{A}}\sum_{\chi \in \mathcal{D}}  \Bigg|\sum_{N_1\leq  n\leq N_2} \frac{\chi(n)a_n e(\alpha n)}{n}\Bigg|^{2k}\Bigg)^{1/2k}\\
 & \quad \quad + O\left(|\mathcal{D}|^{1/2k} N_2R^{-1}\right).\\
 \end{aligned}
 \end{equation}
The lemma follows upon noting that 
$$
\Bigg|\sum_{N_1\leq n\leq N_2}\frac{\chi(n) a_ne(\alpha n)}{n}\Bigg|^{2k}= \Bigg|\sum_{N_1^k\leq n\leq N_2^k} \frac{\chi(n)g_{N_1, N_2, k}(n, \alpha)}{n}\Bigg|^{2}.
$$
\end{proof}

\begin{proof}[Proof of Proposition \ref{Tail1}]   Let 
$$
k:=\begin{cases} 2 & \textup{ if } \sqrt{x}\leq N_1\leq x^{21/40}, \\ 
\lfloor \log x/\log N_1\rfloor & \textup{ if } x^{\ep} \leq N_1< \sqrt{x}.
\end{cases} 
$$
We observe that $2\leq k\leq 1/ \ep$ by our assumption on $N_1$, and that $|g_{N_1, N_2, k}(n, \alpha)|\leq d_k(n)$ for all $N_1, N_2$ and $\alpha$. Let $\delta=1/100$ and $\nu>0$ be a small parameter to be chosen. Using Lemma \ref{Reduction} with the choice $R=N_1^{1+\delta}$ together with the large sieve inequality \eqref{HBLargeSieve} and the easy inequality $|a+b|^{k}\leq 2^{k} (|a|^k+|b|^k)$ we obtain 
\begin{equation}\label{eqTail2}
\begin{aligned}
&\sum_{d\in \Fo} \max_{\alpha\in [0,1)} \left|\sum_{N_1\leq n\leq N_2} \frac{\chi_d(n) a_n e(\alpha n)}{n}\right|^{2k} \\
& \ll_{\ep} \sum_{\alpha \in \mathcal{A}}  \sum_{d\in \Fo} \left|\sum_{N_1^k\leq n\leq N_2^k} \frac{\chi_d(n)g_{N_1, N_2, k}(n, \alpha)}{n}\right|^{2}+ \frac{x}{N_1^{2k\delta}}\\
& \ll_{\ep,\nu} (N_2^kx)^{\nu} (x+ N_2^k) \sum_{\alpha \in \mathcal{A}}\sum_{\substack{N_1^k\leq n_1, n_2 \leq N_2^k\\ n_1n_2= \square}}\frac{|g_{N_1,N_2, k}(n_1, \alpha)g_{N_1, N_2, k}(n_2, \alpha)|}{n_1n_2}+\frac{x}{N_1^{2k\delta}}\\
& \ll_{\ep, \nu} x^{21/20+3\nu} N_1^{1+\delta} \sum_{\substack{N_1^k\leq n_1, n_2\leq N_2^k \\ n_1n_2= \square}}\frac{d_k(n_1)d_k(n_2)}{n_1n_2}+\frac{x}{N_1^{2k\delta}},
\end{aligned}
\end{equation}
since $N_2^k\ll_{\ep} x^{21/20}$ by our assumption on $k$ and $N_1$. Now, writing $n_1n_2=n^2$, we get 
\begin{equation}\label{DivisorSums1}
\sum_{\substack{N_1^k\leq n_1, n_2\leq N_2^k \\ n_1n_2= \square}}\frac{d_k(n_1)d_k(n_2)}{n_1n_2}\leq  \sum_{n\geq N_1^k} \frac{d_{2k}(n^2)}{n^2} \leq  N_1^{-k(1-\nu)}  \sum_{n=1}^{\infty} \frac{d_{2k}(n^2)}{n^{1+\nu}}\ll_{\ep, \nu} N_1^{-k(1-\nu)}.
\end{equation}
Now, since $k\geq 2$, then $N_1^{3k/2}\geq N_1^{k+1}\geq x$, by our definition of $k$, which implies that $N_1^k\geq x^{2/3}$. Thus, choosing $\nu$ to be suitably small, and combining this estimate with \eqref{eqTail2} gives
$$ \sum_{d\in \Fo} \max_{\alpha\in [0,1)} \left|\sum_{N_1\leq n\leq N_2} \frac{\chi_d(n)a_n e(\alpha n)}{n}\right|^{2k} \ll_{\ep, \nu} x^{21/20+3\nu} N_1^{1+\delta} x^{-2/3(1-\nu)} + \frac{x}{N_1^{2k\delta}} \ll_{\ep} x^{1-4\delta/3}.$$
Finally, the number of fundamental discriminants $d\in \Fo$ such that 
$$\max_{\alpha\in [0,1)} \left|\sum_{N_1\leq n\leq N_2} \frac{\chi_d(n) a_ne(\alpha n)}{n}\right|\geq \frac{1}{(\log N_1)^A}$$ 
is 
$$ \leq (\log N_1)^{2Ak} \sum_{d\in \Fo} \max_{\alpha\in [0,1)} \left|\sum_{N_1\leq n\leq N_2} \frac{\chi_d(n) a_ne(\alpha n)}{n}\right|^{2k} \ll_{\ep, A} x^{1-\delta}, $$
which completes the proof.
\end{proof}
\begin{proof}[Proof of Proposition \ref{Tail2}]
We proceed similarly to the proof of Proposition \ref{Tail1} but we choose $k=\lfloor \log x/(3\log N_1)\rfloor$ in this case. Then  we have 
$$1/(3\ep)-1 \leq k\leq (\log x\log_4x)/ (3C\log_2 x \log _3 x)
$$ by our assumption on $N_1$. Using Lemma \ref{Reduction} with $R=N_1^2$ together with the large sieve inequality \eqref{BabyLargeSieve} and the easy inequality $|a+b|^{k}\leq 2^{k} (|a|^k+|b|^k)$ we obtain
\begin{equation}\label{eqTail3}
\begin{aligned}
&\sum_{d\in \Fo} \max_{\alpha\in [0,1)} \left|\sum_{N_1\leq n\leq N_2} \frac{\chi_d(n)a_n e(\alpha n)}{n}\right|^{2k} \\
& \ll e^{O(k)}\sum_{\alpha \in \mathcal{A}}  \sum_{d\in \Fo} \left|\sum_{N_1^k\leq n\leq N_2^k} \frac{\chi_d(n)g_{N_1, N_2, k}(n, \alpha)}{n}\right|^{2}+ \frac{xe^{O(k)}}{N_1^{2k}}\\
& \ll x e^{O(k)} \sum_{\alpha \in \mathcal{A}}\sum_{\substack{N_1^k\leq n_1, n_2 \leq N_2^k\\ n_1n_2= \square}}\frac{|g_{N_1, N_2, k}(n_1, \alpha)g_{N_1, N_2, k}(n_2, \alpha)|}{n_1n_2}+\frac{xe^{O(k)}}{N_1^{2k}}\\
& \ll x N_1^{2} e^{O(k)}  \sum_{n\geq N_1^k} \frac{d_{2k}(n^2)}{n^2}+\frac{x}{N_1^{2k}}
\end{aligned}
\end{equation}
by \eqref{DivisorSums1}, since $|g_{N_1, N_2, k}(n, \alpha)|\leq d_k(n)$ for all $N_1, N_2$ and $\alpha$. To bound the sum over $n$, we shall use Rankin's trick. Choosing $\nu=\log_3 k/(2\log k)$ and using \eqref{Divisor1} we obtain
$$ 
\sum_{n\geq N_1^k} \frac{d_{2k}(n^2)}{n^2} \leq  N_1^{-k\nu}  \sum_{n=1}^{\infty} \frac{d_{2k}(n^2)}{n^{2-\nu}} \ll \exp\left(-\frac{k\log_3 k\log N_1}{2\log k} +O(k\log_2 k)\right).
$$
Inserting this estimate in \eqref{eqTail3}  we deduce that 
$$ 
\sum_{d\in \Fo} \max_{\alpha\in [0,1)} \left|\sum_{N_1\leq n\leq N_2} \frac{\chi_d(n)a_n e(\alpha n)}{n}\right|^{2k} 
 \ll x \exp\left(-\frac{k\log_3 k\log N_1}{2\log k} +O(k\log_2 k +\log N_1)\right).
$$
Therefore, the number of fundamental discriminants $d\in \Fo$ such that 
$$\max_{\alpha\in [0,1)} \left|\sum_{N_1\leq n\leq N_2} \frac{\chi_d(n)a_n e(\alpha n)}{n}\right|\geq \frac{1}{(\log N_1)^A}$$ 
is 
\begin{align*}
& \leq (\log N_1)^{2Ak} \sum_{d\in \Fo} \max_{\alpha\in [0,1)} \left|\sum_{N_1\leq n\leq N_2} \frac{\chi_d(n)a_n e(\alpha n)}{n}\right|^{2k} \\
&
\ll x \exp\left(-\frac{k\log_3 k\log N_1}{2\log k} + O(k\log_2 k +Ak \log_2 N_1+ \log N_1)\right)\\
& 
\ll x  \exp\left(-\frac{\log x \log_4 x}{10 \log_2 x}\right)
\end{align*}
by our assumption on $N_1$ and $k$, if $\ep$ is suitably small and $C$ is suitably large. This completes the proof.

\end{proof}
Using Propositions \ref{Tail1} and \ref{Tail2} reduces the proof of Theorem \ref{KeyResult} to studying the distribution of the maximum over $\alpha\in[0, 1)$ of the very short sum 
$$ \Bigg|\sum_{\substack{1\leq n\leq Y\\ P^{+}(n)> y}} \frac{\chi_d(n) h(n) e(\alpha n)}{n}\Bigg|.$$
To this end we shall use the large sieve to bound its $2k$-th moments for \emph{every} $k\leq (\log x)/(3\log Y)$.

\begin{pro}\label{TailUnsmooth} Let $h(n)$ be a completely multiplicative function such that $|h(n)|\leq 1$ for all $n$. Let $C>0$ be a suitably large constant and put $Y= \exp(C\log_2 x \log _3 x/\log_4 x)$. For any positive integer $2\leq k\leq \log x/(3\log Y)$ and real number $(k\log k)/10\leq y\leq k^2$ we have 
$$
\sum_{d\in \Fo} \max_{\alpha\in [0,1)} \left|\sum_{\substack{1\leq n\leq Y\\ P^{+}(n)> y}} \frac{\chi_d(n)h(n) e(\alpha n)}{n}\right|^{2k} \ll x\left(e^{O\left(\frac{ k\log_2k}{\log k}\right)} \left(\frac{2e^{2\gamma}k\log y}{ey}\right)^{k}+O\left(\frac{1}{(\log k)^{10k}}\right)\right).
$$
\end{pro}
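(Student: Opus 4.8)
The plan is to apply the large sieve machinery of Lemma \ref{Reduction} combined with the quadratic large sieve inequalities \eqref{BabyLargeSieve} and \eqref{HBLargeSieve}, and then to recognize the resulting diagonal sum as essentially the moment $\M(k)$ of the tail of a random Euler product bounded in Proposition \ref{MomentsUnsmooth}. First I would write $b_n=h(n)\mathbf{1}_{P^{+}(n)>y}$ for the coefficients of the sum, note that $|b_n|\le 1$, and apply Lemma \ref{Reduction} with $\mathcal{D}=\Fo$, $N_1=1$ (or rather the smallest integer with a prime factor exceeding $y$), $N_2=Y$, and $R=Y^{2}$ (any power of $Y$ large enough to make the error negligible after we use $k\le \log x/(3\log Y)$, so that $Y^k\le x^{1/3}$, and $N_2^k\le x^{1/3}$). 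This turns the $2k$-th moment into $\sum_{\alpha\in\mathcal{A}}\sum_{d\in\Fo}\big|\sum_{n\le Y^k}\chi_d(n)g_k(n,\alpha)/n\big|^2$ plus a negligible error of size $x\cdot Y^{k}/R^{k}$, where $g_k(n,\alpha)=\sum_{n_1\cdots n_k=n}\prod b_{n_j}e(\alpha n_j)$ satisfies $|g_k(n,\alpha)|\le d_k(n)\mathbf{1}_{P^{+}(n)>y}$ (since a product of integers each with a prime factor $>y$ itself has a prime factor $>y$).

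Next I would apply the quadratic large sieve. Since $Y^k\le x^{1/3}<x$, either inequality applies and gives a bound of the shape $\ll x\cdot e^{O(k)}\sum_{\substack{m,n\le Y^k\\ mn=\square}}|g_k(m,\alpha)g_k(n,\alpha)|/(mn)$ (using \eqref{BabyLargeSieve}, whose leading term is $x+(Y^k)^2\log(Y^k)\ll x$ in this range; the $\sum_\alpha$ over $|\mathcal{A}|\le R$ points contributes a factor $R$ which is polynomial in $Y$, but this is where I must be careful — I should instead pull the $\alpha$-sum through and bound $g_k(m,\alpha)$ trivially by $d_k(m)\mathbf{1}_{P^+(m)>y}$, so that the $\alpha$-dependence disappears and the surviving $\sum_{\alpha}$ just produces a factor $|\mathcal{A}|=R$). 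Writing $mn=\ell^2$ this is $\ll x\cdot R\cdot e^{O(k)}\sum_{\ell:\,P^{+}(\ell)>y}d_{2k}(\ell^2)/\ell^2$. Now I have reduced everything to the arithmetic sum $\sum_{P^{-}(\ell)>y}$ — wait, here it is $P^{+}(\ell)>y$, which is much weaker; the key realization is that since every $n\le Y$ contributing to the original sum has \emph{some} prime factor $>y$, after taking $k$-fold products and forming $mn=\ell^2$, we can still only guarantee $\ell$ has a prime factor exceeding $y$, not that all its prime factors do. So I would instead keep the structure: $g_k(n,\alpha)$ is supported on $n$ all of whose prime-power blocks came from the $b_{n_j}$'s, and I would factor $\ell=ab$ where $a$ is the $y$-friable part and $b$ is the part composed of primes $>y$; the condition forces $b>1$. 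Then $\sum_{\ell}d_{2k}(\ell^2)/\ell^2 \le \big(\sum_{P^{+}(a)\le y}d_{2k}(a^2)/a^2\big)\big(\sum_{P^{-}(b)>y,\,b>1}d_{2k}(b^2)/b^2\big)$; the first factor is $\exp(O(k\log_2 k))$ by... actually I would bound the first factor by $\prod_{p\le y}(1-1/p)^{-2k}=\exp(O(k\log_2 y))=\exp(O(k\log_2 k))$ since $y\le k^2$, and the second factor is exactly $\mathcal{M}_y(2k)$ up to the $b>1$ restriction, hence by Proposition \ref{MomentsUnsmooth} (applicable since $(2k\log 2k)/10\le y$) is $\le e^{O(k\log_2 y/\log y)}(4k/(ey\log y))^{2k}\cdot(\text{something})$.

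The main obstacle I anticipate is bookkeeping the constants so that the final bound reads $\big(2e^{2\gamma}k\log y/(ey)\big)^k$ rather than a power of $\mathcal{M}_y(2k)$ — i.e., reconciling a ``$2k$-th moment over $d$'' with a ``$k$-th power of an $L^2$-type object''. The resolution is that after the large sieve the diagonal sum is genuinely a $\emph{single}$ sum $\sum_\ell d_{2k}(\ell^2)/\ell^2$ which behaves like $\mathcal{M}_y(k)$-to-the-first-power, not squared, because $d_{2k}(\ell^2)$ already encodes the full $2k$-fold structure; thus one gets $(Ck\log y/y)^k$ with $C$ a clean constant, and the factor $e^{2\gamma}$ emerges from writing $\log y \approx \log_2\!x$-type Mertens constants more carefully, or rather from the comparison $\sum_{p>y}1/p^2 = 1/(y\log y)(1+o(1))$ used in Proposition \ref{MomentsUnsmooth}. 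I would also need to track the two error terms: the Lemma \ref{Reduction} error $|\Fo|^{1/2k}Y^k/R$, which after raising to the $2k$-th power is $x\cdot(Y^k/R)^{2k}$, negligible for $R=Y^2$ and $k\ge2$; and crucially the ``secondary'' term $O(1/(\log k)^{10k})$ in the statement, which I expect arises not from this argument at all but from an alternative trivial bound: when $y$ is small relative to $k$ one also has the crude estimate via \eqref{FirstBoundM}-type reasoning, and one takes the minimum — so I would split into the regime where the large-sieve bound dominates and the regime where one instead bounds the short sum trivially by $\sum_{n\le Y}1/n\ll\log Y$ raised to the $2k$ and uses $\log Y \ll (\log_2 x)(\log_3 x)/\log_4 x$ against $(\log k)^{10}$, noting $\log Y$ is tiny compared with $\log x$. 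Assembling these pieces with $y\le k^2$ (so $\log_2 y\le \log_2 k + O(1)$ and $\log y\asymp\log k$) yields exactly the claimed bound.
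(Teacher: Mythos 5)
Your high-level plan (apply Lemma~\ref{Reduction}, then the quadratic large sieve, then bound the resulting diagonal divisor sum) is the right skeleton, and you correctly spot that after passing to $mn=\ell^2$ one must factor $\ell=ab$ into its $y$-friable and $y$-rough parts to recognize an $\mathcal{M}_y$-type quantity. However, the execution has a gap that is fatal for the stated range of $k$, and it is precisely the point where the paper's proof departs from your route.

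The problem is the $\alpha$-sum. After applying Lemma~\ref{Reduction} with $N_2=Y$ and $R=Y^2$, the large sieve step produces a factor $|\mathcal{A}|=R=Y^2=\exp(2C\log_2 x\log_3 x/\log_4 x)$ in front of the diagonal sum. When you take the $2k$-th root this becomes $R^{1/2k}=Y^{1/k}$, which is \emph{not} $1+o(1)$ unless $k$ is comparable to $\log Y$, i.e.\ already of size $\asymp\log_2 x\log_3 x/\log_4 x$. For small fixed $k$ (the proposition must hold down to $k=2$), $Y^{1/k}$ is a genuinely large quantity, and the resulting bound $\ll xR\,(e^\gamma\log y)^{2k}\mathcal{M}_y(k)$ is far larger than the claimed $\ll x\big(2e^{2\gamma}k\log y/(ey)\big)^k$. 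You flag the $R$ factor as something to ``be careful'' about, but the resolution you offer (absorb it into the secondary term) does not work: $(\log k)^{-10k}$ is tiny, while $Y^{2}$ grows with $x$ and is independent of $k$. The paper avoids this by never using a single application of Lemma~\ref{Reduction} over the full range $[1,Y]$. Instead it first factors out the friable part via H\"older (the identity $n=ab$ is exploited at the level of the $\chi_d$-sum, not of the divisor sum), reducing to $S_d(y,z)$; it then splits $(y,z]$ at $N=\min(z,\exp(C\log k\log_2 k/\log_3 k))$, so that for the range $(y,N]$ one can take $R$ depending only on $k$ (giving $R^{1/2k}\to1$), while the remaining range $(N,z]$ is handled by a dyadic decomposition with $R=4^j$ on each block, a Rankin-trick bound giving exponential decay in the block index $j$, and a H\"older weighting $j^{4k}/j^2$ to sum the blocks. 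The secondary term $O((\log k)^{-10k})$ is exactly the Minkowski contribution of this dyadic/Rankin part, not a trivial alternative bound in a separate parameter regime as you guessed. Finally, a smaller point: the factor $e^{2\gamma}$ in the main term comes from evaluating the friable Euler product $\prod_{p\le y}(1-1/p)^{-2k}=(e^\gamma\log y)^{2k}(1+O(1/\log y))^{2k}$ to leading order; hiding it inside an unspecified $\exp(O(k\log_2 k))$, as you do, loses precisely the constant you then puzzle over.
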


\begin{proof}[Proof of Proposition \ref{TailUnsmooth}]
We first define
\begin{equation}\label{DefinitionSd}
S_{d}(y, Y)= \max_{\alpha \in [0, 1)}\left|\sum_{\substack{2\leq n\leq Y\\ P^{-}(n)> y}} \frac{\chi_d(n) h(n) e(\alpha n)}{n}\right|.
\end{equation}
Then we have 
$$ \sum_{\substack{1\leq n\leq Y\\ P^{+}(n)> y}} \frac{\chi_d(n) h(n)e(\alpha n)}{n}= \sum_{\substack{1\leq a\leq Y\\ P^{+}(a)\leq  y}} \frac{\chi_d(a)h(a)}{a} \sum_{\substack{1<b\leq Y/a\\ P^{-}(b)> y}}\frac{\chi_d(b)h(b) e(\alpha ab)}{b},$$
and hence 
$$ \max_{\alpha \in [0, 1)}\left|\sum_{\substack{1\leq n\leq Y\\ P^{+}(n)> y}} \frac{\chi_d(n) h(n) e(\alpha n)}{n}\right| \leq \sum_{\substack{1\leq a\leq Y\\ P^{+}(a)\leq  y}} \frac{S_d(y, Y/a)}{a}.
$$
Therefore, using H\"older's inequality we obtain 
\begin{equation}\label{HolderSmooth}
\begin{aligned}
\sum_{d\in \Fo} \max_{\alpha\in [0,1)} \left|\sum_{\substack{1\leq n\leq Y\\ P^{+}(n)> y}} \frac{\chi_d(n) h(n)e(\alpha n)}{n}\right|^{2k}
& \leq \sum_{d\in \Fo} \left(\sum_{P^{+}(a)\leq y }\frac{1}{a}\right)^{2k-1} \sum_{\substack{1\leq a\leq Y\\ P^{+}(a)\leq  y}}\frac{S_d(y, Y/a)^{2k}}{a}\\
& \leq \left(e^{\gamma} \log y +O(1)\right)^{2k-1}\sum_{\substack{1\leq a\leq Y\\ P^{+}(a)\leq  y}}\frac{1}{a} \sum_{d\in \Fo}S_d(y, Y/a)^{2k}.
\end{aligned}
\end{equation}
by Mertens' Theorem. We shall now bound the moments 
$$ \sum_{d\in \Fo}S_d(y, z)^{2k}, $$
uniformly in $2\leq y<z\leq Y$. 
To this end we split the inner sum over $n$ into two parts $y <n\leq N$, and $N< n\leq z$, where 
\begin{equation}\label{DefinitionCutN}
N= \min\big(z, \exp(C\log k \log_2k /\log_3 k)\big).
\end{equation}
Note that the second part will be empty unless $z>\exp(C\log k \log_2k /\log_3 k)$.  Using Minkowski's inequality we get 
\begin{equation}\label{MinkUnsmooth}
\begin{aligned}
\left(\sum_{d\in \Fo}S_d(y, z)^{2k}\right)^{1/2k} 
& \leq \Bigg(\sum_{d\in \Fo}\max_{\alpha\in [0,1)} \Bigg|\sum_{\substack{2\leq n\leq N\\ P^{-}(n)> y}} \frac{\chi_d(n) h(n) e(\alpha n)}{n}\Bigg|^{2k}\Bigg)^{1/2k}\\
& \quad \quad + \Bigg(\sum_{d\in \Fo}\max_{\alpha\in [0,1)} \Bigg|\sum_{\substack{N<n\leq z\\ P^{-}(n)> y}} \frac{\chi_d(n) h(n) e(\alpha n)}{n}\Bigg|^{2k}\Bigg)^{1/2k}.
\end{aligned}
\end{equation}
We start by bounding the first term. By Lemma \ref{Reduction} with $R:=\exp(2C\log k \log_2k /\log_3 k)$ and $a_n=h(n)$ if $P^{-}(n)>y$ and equals $0$ otherwise, we obtain
\begin{equation}\label{Reduction2}
\begin{aligned}
&\Bigg(\sum_{d\in \Fo} \max_{\alpha\in [0,1)} \Bigg|\sum_{\substack{2\leq n\leq N\\ P^{-}(n)>y}} \frac{\chi_d(n) h(n)e(\alpha n)}{n}\Bigg|^{2k}\Bigg)^{1/2k} \\
& \leq \Bigg(\sum_{\alpha \in \mathcal{A}}  \sum_{d\in \mathcal{F}(x)} \Bigg|\sum_{\substack{n\leq N^{k}\\ P^{-}(n)>y}}\frac{\chi_d(n)g_{2, N, k}(n, \alpha)}{n}\Bigg|^{2}\Bigg)^{1/2k}+ O\left(x^{1/2k}R^{-{1/2}}\right),
\end{aligned}
\end{equation}
where $\mathcal{A}=\{b/R : 1\leq b\leq R\}.$ We now proceed similarly to the proof of \eqref{eqTail3}. Using the large sieve inequality \eqref{BabyLargeSieve}, and noting that $N^k\leq x^{1/3}$ and  $|g_{2, N, k}(n, \alpha)|\leq \widetilde{d_k}(n)$ for all $\alpha$ and $N$ we derive
\begin{equation}\label{TailDivisorSum0}
\begin{aligned}
\sum_{d\in \mathcal{F}(x)} \Bigg|\sum_{\substack{n\leq N^{k}\\ P^{-}(n)>y}}\frac{\chi_d(n)g_{2, N, k}(n, \alpha)}{n}\Bigg|^{2} 
&\ll x \sum_{\substack{n_1, n_2 \leq N^{k}\\ P^{-}(n_1n_2)> y\\ n_1n_2= \square}}
\frac{\widetilde{d_k}(n_1)\widetilde{d_k}(n_2)}{n_1n_2}  \leq  x \sum_{\substack{P^{-}(n)> y}} \frac{\widetilde{d_{2k}}(n^2)}{n^2}\\
& \ll x e^{O(k\log\log y/\log y)} \left(\frac{2k}{ey\log y}\right)^k,
\end{aligned}
\end{equation}
where the last inequality follows from \eqref{MykExpression} and Proposition \ref{MomentsUnsmooth}. 
Inserting this estimate in \eqref{Reduction2} implies that 
\begin{equation}\label{FirstTermHolder}
\begin{aligned}
& \Bigg(\sum_{d\in \Fo} \max_{\alpha\in [0,1)} \Bigg|\sum_{\substack{2\leq  n\leq N\\ P^{-}(n)>y}} \frac{\chi_d(n) h(n) e(\alpha n)}{n}\Bigg|^{2k}\Bigg)^{1/2k} \\
& \quad \leq x^{1/(2k)}\left(e^{O(\log_2k/\log k)} \left(\frac{2k}{ey\log y}\right)^{1/2} + O\left(\frac{1}{R^{1/2}}\right)\right).
\end{aligned}
\end{equation}

We now assume that $z>\exp(C\log k \log_2k /\log_3 k)$ and bound the second term on the right hand side of \eqref{MinkUnsmooth}. We shall
split the inner sum over $n$ into dyadic intervals. Let $J_1= \lfloor \log N/\log 2\rfloor, J_2= \lfloor \log z/\log 2\rfloor$, and define $t_{J_1}= N$, $t_{J_2+1}= z$, and $t_j:= 2^j$ for $J_1+1\leq j\leq J_2$. Using H\"older's inequality we obtain 
\begin{equation}\label{HoldLongSum}
\begin{aligned}
\Bigg|\sum_{\substack{N<n\leq z\\ P^{-}(n)> y}} \frac{\chi_d(n) h(n)e(\alpha n)}{n}\Bigg|^{2k} 
&= \Bigg|\sum_{J_1\leq j\leq J_2} \frac{1}{j^2} \Bigg(j^2 \sum_{\substack{t_j<n\leq t_{j+1}\\ P^{-}(n)> y}} \frac{\chi_d(n) h(n) e(\alpha n)}{n}\Bigg)\Bigg|^{2k}\\
& \leq  \left(\sum_{J_1\leq j\leq J_2} \frac{1}{j^{\frac{4k}{2k-1}}}\right)^{2k-1} \sum_{J_1\leq j\leq J_2} j^{4k} \Bigg|\sum_{\substack{t_j<n\leq t_{j+1}\\ P^{-}(n)> y}} \frac{\chi_d(n) h(n) e(\alpha n)}{n}\Bigg|^{2k}\\
& \leq \left(\frac{C_1}{\log N} \right)^{2k+1} \sum_{J_1\leq j\leq J_2} j^{4k} \Bigg|\sum_{\substack{t_j<n\leq t_{j+1}\\ P^{-}(n)> y}} \frac{\chi_d(n) h(n) e(\alpha n)}{n}\Bigg|^{2k}, 
\end{aligned}
\end{equation}
for some constant $C_1>0$. 
Therefore, this reduces the problem to bounding the following moments over dyadic intervals $[t_j, t_{j+1}]$: $$\sum_{d\in \Fo}\max_{\alpha\in [0,1)} \Bigg|\sum_{\substack{t_j<n\leq t_{j+1}\\ P^{-}(n)> y}} \frac{\chi_d(n) h(n) e(\alpha n)}{n}\Bigg|^{2k}.$$
Let $\mathcal{B}_j= \{b/4^j : 1\le b\leq 4^j \}.$ By the easy inequality $|a+b|^k\leq 2^k(|a|^k+|b|^k)$ together with Lemma \ref{Reduction} with $R=4^j$ and the same choice of $a_n$ as before, we derive
\begin{equation}\label{LSieveUnsmooth}
\begin{aligned}
&\sum_{d\in \Fo} \max_{\alpha\in [0,1)} \Bigg|\sum_{\substack{t_j<n\leq t_{j+1}\\ P^{-}(n)> y}} \frac{\chi_d(n) h(n) e(\alpha n)}{n}\Bigg|^{2k} \\
& \ll e^{O(k)}\sum_{\alpha \in \mathcal{B}_j}  \sum_{d\in \mathcal{F}(x)} \Bigg|\sum_{\substack{t_j^k<n\leq t_{j+1}^k\\ P^{-}(n)> y}} \frac{\chi_d(n)g_{t_j, t_{j+1}, k}(n, \alpha)}{n}\Bigg|^{2}+ \frac{xe^{O(k)}}{2^{2jk}},
\end{aligned}
\end{equation}
since $t_j\asymp t_{j+1} \asymp 2^j$. 
Furthermore, similarly to the proof of \eqref{eqTail3}, it follows from the large sieve inequality \eqref{BabyLargeSieve} that the main term on the right hand side of \eqref{LSieveUnsmooth} is
\begin{equation}\label{EstimateDivisors14}
\ll x e^{O(k)} \sum_{\alpha \in \mathcal{B}_j}\sum_{\substack{t_j^k<n_1, n_2 \leq t_{j+1}^k\\ P^{-}(n_1n_2)> y\\ n_1n_2= \square}}\frac{|g_{t_j, t_{j+1}, k}(n_1, \alpha)g_{t_j, t_{j+1}, k}(n_2, \alpha)|}{n_1n_2}  \ll e^{O(k)}  4^j x \sum_{n\geq t_j^{k}} \frac{d_{2k}(n^2)}{n^2},
\end{equation}
 since $t_{j+1}^k\leq z^k\leq Y^k\leq  x^{1/3}$. We now put $\nu=\log_3 k/(2\log k)$ and use \eqref{Divisor1} to get
\begin{equation}\label{EstimateDivisors15}
\sum_{n\geq t_j^{k}} \frac{d_{2k}(n^2)}{n^2} \leq  t_j^{-k\nu}  \sum_{n=1}^{\infty} \frac{d_{2k}(n^2)}{n^{2-\nu}} \ll \exp\left(-\frac{j k\log_3 k}{4\log k} +O(k\log_2 k)\right).
\end{equation}
Inserting these estimates in \eqref{LSieveUnsmooth} gives
$$ \sum_{d\in \Fo} \max_{\alpha\in [0,1)} \Bigg|\sum_{\substack{t_j<n\leq t_{j+1}\\ P^{-}(n)> y}} \frac{\chi_d(n) h(n) e(\alpha n)}{n}\Bigg|^{2k} \ll x\exp\left(-\frac{j k\log_3 k}{5\log k} +O(k\log_2 k)\right).$$
Using this last estimate together with \eqref{HoldLongSum}, and noting that $j^4 \leq \exp(j\log_3 k/(20 \log k))$ for all $j\geq  J_1$ by our choice of $N$ if $C$ is suitably large, we derive
\begin{align*}
\sum_{d\in \Fo} \max_{\alpha\in [0,1)} \Bigg|\sum_{\substack{N<n\leq z\\ P^{-}(n)> y}} \frac{\chi_d(n) h(n) e(\alpha n)}{n}\Bigg|^{2k} 
&\ll x e^{O(k\log_2 k)} \sum_{J_1\leq j\leq J_2} \exp\left(-\frac{j k\log_3 k}{20\log k} \right) \\
& \ll x \exp\left(-\frac{(\log N) k\log_3 k}{20\log k} +O(k\log_2k)\right)\\
& \ll x \exp\left(-\frac{C}{40}k\log_2k\right).
\end{align*} 
Combining this estimate with \eqref{MinkUnsmooth} and \eqref{FirstTermHolder} we obtain 
$$ \sum_{d\in \Fo}S_d(y, z)^{2k} \ll x \left(e^{O(\log_2k/\log k)} \left(\frac{2k}{ey\log y}\right)^{1/2}+O\left(\frac{1}{(\log k)^{C/80}}\right)\right)^{2k},$$
uniformly for $y<z\leq Y$. 
Therefore, inserting this estimate in \eqref{HolderSmooth} we deduce 
\begin{align*}
& \sum_{d\in \Fo} \max_{\alpha\in [0,1)} \Bigg|\sum_{\substack{n\leq Y\\ P^{+}(n)> y}} \frac{\chi_d(n) h(n) e(\alpha n)}{n}\Bigg|^{2k}
\\
& \ll x\left(e^{O\left(\frac{\log_2k}{\log k}\right)} \left(\frac{2e^{2\gamma}k\log y}{ey}\right)^{1/2}+O\left(\frac{1}{(\log k)^{C/80-1}}\right)\right)^{2k}.
\end{align*}
The result follows upon choosing $C$ to be suitably large, and  using the basic inequality\footnote{This inequality simply follows by considering the two cases $a\leq \sqrt{b}$ and $a\geq \sqrt{b}$.} $(a+b)^{2m} \leq (a^{2m}+ \sqrt{b}^{2m})(1+\sqrt{b})^{2m}\leq (a^{2m}+b^m) e^{2m\sqrt{b}}$, which  is valid for all real numbers $a, b>0$ and positive integers $m$. 
 \end{proof}

 We end this section by deducing Theorem \ref{KeyResult} from Propositions \ref{Tail1}, \ref{Tail2}, and \ref{TailUnsmooth}. 
 
 \begin{proof}[Proof of Theorem \ref{KeyResult}]

 Let $Y:= \exp(C\log_2 x \log _3 x/\log_4 x)$ for some suitably large constant $C>0$. Let $L_1:=\lfloor \log Y /\log 2\rfloor$, $L_2:= \lfloor \log Z/\log 2\rfloor$, and define $s_{L_1}:= Y$, $s_{L_2+1}:= Z$, and $s_{\ell}:=2^{\ell}$ for $L_1+1\leq \ell\leq L_2$.
Then we have
  \begin{equation}\label{PolyaSplit}
\begin{aligned}
\max_{\alpha \in [0, 1)} \Bigg|\sum_{\substack{1\leq n\leq Z\\P^+(n)>y}} \frac{\chi_d(n) h(n) e(\alpha n)}{n}\Bigg| 
&  \leq  \max_{\alpha \in [0, 1)} \Bigg|\sum_{\substack{1\leq n\leq Y\\ P^{+}(n)> y}} \frac{\chi_d(n) h(n) e(\alpha n)}{n}\Bigg| \\
& + \sum_{L_1\leq \ell \leq L_2} \max_{\alpha \in [0, 1)} \Bigg|\sum_{\substack{s_{\ell}\leq n\leq s_{\ell+1}\\ P^+(n)>y}} \frac{\chi_d(n) h(n) e(\alpha n)}{n}\Bigg|.
\end{aligned}
\end{equation}

Using Propositions \ref{Tail1} and \ref{Tail2} with $A=2$ and $a_n=h(n)$ if $P^+(n)>y$ and equals $0$ otherwise, we deduce that 
\begin{equation}\label{BoundTailLarge}
\sum_{L_1\leq \ell \leq L_2} \max_{\alpha \in [0, 1)} \Bigg|\sum_{\substack{s_{\ell}\leq n\leq  s_{\ell+1}\\ P^+(n)>y}} \frac{\chi_d(n) h(n) e(\alpha n)}{n}\Bigg| 
\ll  
\sum_{L_1\leq \ell \leq L_2} \frac{1}{\ell^2} \ll \frac{\log_4 x}{\log_2 x\log_3 x},
\end{equation}
for all fundamental discriminants $|d|\leq x$ except for a set $\mathcal{E}(x)$ of size 
$$ |\mathcal{E}(x)| \ll L_2 x \exp\left(-\frac{\log x \log_4 x}{10 \log_2 x}\right)  \ll x \exp\left(-\frac{\log x \log_4 x}{20 \log_2 x}\right).$$
For $B>0$ we let $\widetilde{\Psi}_{x, y} (B)$ be the proportion of fundamental discriminants $|d|\leq x$ such that 
$$ \max_{\alpha \in [0, 1)} \Bigg|\sum_{\substack{1\leq n\leq Y\\ P^{+}(n)> y}} \frac{\chi_d(n) h(n) e(\alpha n)}{n}\Bigg| > e^{\gamma} B.$$
Let $k$ be a positive integer satisfying the assumptions of Proposition  \ref{TailUnsmooth}. Then, it follows from this result that 
\begin{align*}
\widetilde{\Psi}_{x, y} (B) &\leq (e^{\gamma} B)^{-2k} \frac{1}{|\Fo|} \sum_{d\in \Fo} \max_{\alpha\in [0,1)} \Bigg|\sum_{\substack{1\leq n\leq Y\\ P^{+}(n)> y}} \frac{\chi_d(n) h(n) e(\alpha n)}{n}\Bigg|^{2k} \\
& \ll e^{O\left(\frac{ k\log_2k}{\log k}\right)} \left(\frac{2k\log y}{e B^2y}\right)^{k}+O\left(\frac{1}{(e^{\gamma} B(\log k)^5)^{2k}}\right).
\end{align*}
We now assume that $1/(\log y)^2\leq B\leq \sqrt{20}$ and choose $k= \lfloor (B^2 y)/(2 \log y) \rfloor$. This gives 
\begin{equation}\label{BoundMidTailDistribution}
\widetilde{\Psi}_{x, y} (B) \ll \exp\left(-\frac{B^2y}{2 \log y} \left(1+O\left(\frac{\log_2 y}{\log y}\right)\right)\right). 
\end{equation}
Combining this estimate with \eqref{PolyaSplit} and \eqref{BoundTailLarge} and choosing  $B=A-C_0\log_4 x/(\log_2 x\log_3 x)$ for some suitably large constant $C_0$ completes the proof.  
\end{proof}


\section{The distribution of the tail of the P\'olya Fourier series for prime discriminants: Proof of Theorem \ref{KeyResultPrimes}}

In order to prove Theorem \ref{KeyResultPrimes} we follow the same lines of the proof of Theorem \ref{KeyResult} but we replace the large sieve inequality \eqref{BabyLargeSieve} by the following large sieve inequality for prime discriminants, which is a special case of Lemma 9 of Montgomery and Vaughan \cite{MV79} (see also Lemma 1 of \cite{BaMo}). 

\begin{lem} [Lemma 9 of \cite{MV79}]\label{LargeSieveMV}
Let $x, N$ be real numbers such that $x\geq 2$ and  $2\leq N\leq x^{1/3}$. Then for arbitrary complex numbers $a_1, \dots, a_N$ we have 
$$
\sum_{p\le x} \left|\sum_{n\leq N} a_n \psi_p(n)\right|^2 \ll \frac{x}{\log x} \sum_{\substack{m, n \leq N\\ mn= \square}} |a_ma_n|. 
$$
\end{lem}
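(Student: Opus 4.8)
The plan is to open the square, peel off the diagonal (which produces exactly the right‑hand side), and control the off‑diagonal character sums over primes by combining the Heath--Brown quadratic large sieve \eqref{HBLargeSieve} with an elementary upper‑bound sieve. Writing out the square and using that $\psi_p$ is completely multiplicative (for odd $p$; the single term $p=2$ is trivially absorbed),
$$\sum_{p\le x}\Bigl|\sum_{n\le N}a_n\psi_p(n)\Bigr|^2=\sum_{m,n\le N}a_m\overline{a_n}\sum_{p\le x}\psi_p(mn).$$
When $mn$ is a perfect square, $\psi_p(mn)=1$ for every $p\nmid mn$, so $\sum_{p\le x}\psi_p(mn)=\pi(x)+O(\log x)$, and since $\sum_{m,n\le N}|a_ma_n|\le N\sum_{n\le N}|a_n|^2\le x^{1/3}\sum_{mn=\square}|a_ma_n|$, these diagonal terms contribute $\ll \frac{x}{\log x}\sum_{mn=\square}|a_ma_n|$. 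It remains to bound the off‑diagonal sum, and here I would split the primes at $\xi:=x^{\delta}$ for a sufficiently small fixed $\delta>0$.

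For $p\le\xi$ the Legendre symbol $\psi_p$ is the Kronecker symbol attached to a fundamental discriminant of modulus $|D|=p\le\xi$, so dropping the nonnegativity gives
$$\sum_{p\le\xi}\Bigl|\sum_{n\le N}a_n\psi_p(n)\Bigr|^2\le\sum_{\substack{D\text{ fund.\ disc.}\\ |D|\le\xi}}\Bigl|\sum_{n\le N}a_n\chi_D(n)\Bigr|^2\ll_{\varepsilon}(\xi N)^{\varepsilon}(\xi+N)\sum_{\substack{m,n\le N\\ mn=\square}}|a_ma_n|\ll\frac{x}{\log x}\sum_{\substack{m,n\le N\\ mn=\square}}|a_ma_n|$$
by \eqref{HBLargeSieve} (taking $\varepsilon$ small and using $N\le x^{1/3}$), which disposes of the small‑prime range entirely. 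For $\xi<p\le x$ all primes satisfy $\log p\asymp\log x$, and I would majorise the indicator of the primes by a Selberg (or Brun) upper‑bound weight, $\mathbf 1_{p\text{ prime}}(n)\le\bigl(\sum_{d\mid n,\ d\le\xi}\lambda_d\bigr)^2$ with $\lambda_1=1$, $|\lambda_d|\le1$, $\lambda_d$ supported on odd squarefree $d\le\xi$ sifting out the odd primes up to $\xi$. Expanding the square in $d\mid n$ reduces the remaining contribution to the single‑modulus quantities $\sum_{e\mid n,\ n\le x}\bigl|\sum_{m\le N}a_m\bigl(\tfrac{m}{n}\bigr)\bigr|^2=\sum_{\nu\le x/e}\bigl|\sum_{m}a_m\bigl(\tfrac{m}{e}\bigr)\bigl(\tfrac{m}{\nu}\bigr)\bigr|^2$ for $e=[d_1,d_2]\le\xi^2$. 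Expanding these once more and applying the P\'olya--Vinogradov inequality to the character $\nu\mapsto\bigl(\tfrac{m_1m_2}{\nu}\bigr)$ (of conductor $\ll m_1m_2\le N^2$) for the non‑square pairs isolates a main term of the shape $\tfrac{x}{2e}T(e)$ with $|T(e)|\le\sum_{mn=\square}|a_ma_n|$, together with an error $\ll N^2(\log N)\sum_n|a_n|^2$ that is uniform in $e$. Summing over $d_1,d_2$, the Selberg cancellation among the $\lambda_d$ turns the weighted combination of the main terms into $\ll\frac{x}{\log\xi}\sum_{mn=\square}|a_ma_n|\asymp\frac{x}{\log x}\sum_{mn=\square}|a_ma_n|$, while the accumulated P\'olya--Vinogradov errors contribute $\ll\xi^2N^2(\log N)\sum_n|a_n|^2\le x^{2\delta+2/3+o(1)}\sum_n|a_n|^2$, which is $\ll\frac{x}{\log x}\sum_n|a_n|^2$ once $\delta<1/6$. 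Adding the three pieces yields the lemma.

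The step I expect to be the main obstacle is running the upper‑bound sieve so that it genuinely produces the saving $1/\log x$ with an admissible error: the error terms in the evaluation of $\sum_{e\mid n}(\cdots)$ come from P\'olya--Vinogradov, hence grow like $\sqrt{\text{conductor}}\asymp N$ per pair $(m_1,m_2)$ and carry $|\lambda_d|$ rather than $\lambda_d$ (so they enjoy no sieve cancellation), forcing the weight support $\xi$ to be only a small power of $x$; at the same time $\xi$ must remain a genuine power of $x$ for the Selberg quadratic form to realise the full $1/\log\xi\asymp1/\log x$ gain, and one must track the $e$‑dependence of the local densities $T(e)$ through the standard one‑dimensional sieve bookkeeping. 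The hypothesis $N\le x^{1/3}$ is exactly what makes this balance possible: it keeps $N^2\le x^{2/3}$ comfortably below $x$, both for the large sieve in the small‑prime range and for keeping $\xi^2N^2$ below $x^{1-c}$. Note that no information on zeros of $L$‑functions is needed, since the P\'olya--Vinogradov bound for these short character sums is uniform in the modulus.
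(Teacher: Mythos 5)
The paper does not prove this lemma: it is quoted verbatim as Lemma~9 of Montgomery--Vaughan \cite{MV79} (see also Lemma~1 of \cite{BaMo}), so your proposal is necessarily a new argument rather than a reconstruction of the paper's. The skeleton is plausible -- diagonal gives the main term; Heath--Brown's large sieve \eqref{HBLargeSieve} handles $p\le x^{\delta}$; a Selberg $\Lambda^{2}$ weight plus P\'olya--Vinogradov handles the large primes -- and the hypothesis $N\le x^{1/3}$ is indeed what keeps the P\'olya--Vinogradov errors admissible. But the crucial step is not correctly set up.

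The gap is in the clause ``isolates a main term of the shape $\tfrac{x}{2e}T(e)$ with $|T(e)|\le\sum_{mn=\square}|a_ma_n|$ \dots\ the Selberg cancellation among the $\lambda_d$ turns the weighted combination of the main terms into $\ll\tfrac{x}{\log\xi}\sum_{mn=\square}|a_ma_n|$.'' Once you replace $T(e)$ by its absolute value you have discarded its sign structure, and what is left to sum is $\sum_{d_1,d_2\le\xi}\lambda_{d_1}\lambda_{d_2}\,\frac{x}{2[d_1,d_2]}\cdot T([d_1,d_2])$ with only $|T(e)|\le\sum_{mn=\square}|a_ma_n|$ available; this yields at best $\tfrac{x}{2}\bigl(\sum_{d_1,d_2\le\xi}\tfrac{|\lambda_{d_1}\lambda_{d_2}|}{[d_1,d_2]}\bigr)\sum_{mn=\square}|a_ma_n|$, and the coefficient here is $\gg(\log\xi)^{3}$, not $\ll 1/\log\xi$. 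There is no ``Selberg cancellation'' left to invoke once the absolute value has been taken, because the cancellation lives precisely in the signs of the $\lambda_d$, which your bound on $T(e)$ no longer sees.

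The step can be repaired, but it needs a different organisation. Swap the orders of summation: fix the square $m_1m_2=k^2$, so that the main term from the large primes is
$$\frac{x}{2}\sum_{k}\Bigl(\sum_{\substack{m_1,m_2\le N\\ m_1m_2=k^2}}a_{m_1}\overline{a_{m_2}}\Bigr)\prod_{\substack{p\mid 2k}}\Bigl(1-\frac1p\Bigr)\ Q_k(\lambda),\qquad Q_k(\lambda):=\sum_{\substack{d_1,d_2\le\xi\\ (d_1d_2,\,k)=1}}\frac{\lambda_{d_1}\lambda_{d_2}}{[d_1,d_2]},$$
where the coprimality condition $(d_1d_2,k)=1$ appears because $[d_1,d_2]\mid n$ and $(n,k)=1$ are incompatible otherwise. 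Now observe that, up to an error $O(\xi^{2}N^{o(1)})$, one has $\ x\prod_{p\mid 2k}(1-1/p)Q_k(\lambda)=\sum_{n\le x,\ (n,2k)=1}\bigl(\sum_{d\mid n,\ d\le\xi}\lambda_d\bigr)^2$, which is \emph{nonnegative} (it is a sum of squares) and is trivially bounded above by $\sum_{n\le x}\bigl(\sum_{d\mid n}\lambda_d\bigr)^2\asymp x/\log\xi$. Thus $0\le\prod_{p\mid 2k}(1-1/p)Q_k(\lambda)\ll 1/\log\xi$ uniformly in $k\le N$, and inserting this and majorising $\bigl|\sum_{m_1m_2=k^2}a_{m_1}\overline{a_{m_2}}\bigr|$ by $\sum_{m_1m_2=k^2}|a_{m_1}a_{m_2}|$ gives the desired $\ll\frac{x}{\log\xi}\sum_{mn=\square}|a_ma_n|$. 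This monotonicity/positivity comparison with the unrestricted Selberg form is the missing ingredient; without it the write-up does not yield the $1/\log x$ saving.

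Two smaller comments. The inequality ``$\sum_{m,n\le N}|a_ma_n|\le N\sum_n|a_n|^2\le x^{1/3}\sum_{mn=\square}|a_ma_n|$'' serves no purpose for the diagonal, which contributes $(\pi(x)+O(\log x))\sum_{mn=\square}|a_ma_n|$ directly. And since the Jacobi symbol $\bigl(\tfrac{m}{n}\bigr)$ is only defined for odd $n$, the sieve should be run over odd $n$ with $\lambda_d$ supported on odd $d$; this is harmless, but should be said.
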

Since the number of primes up to $x$ is smaller by a factor of size $\log x$ compared to the number of fundamental discriminants up to $x$, it suffices to establish the analogue of Proposition \ref{TailUnsmooth} for prime discriminants. Indeed, the savings in Propositions \ref{Tail1} and \ref{Tail2} are much larger than $\log x$, and hence we can use these results in this setting as well by simply embedding the set of primes $p\leq x$ in the set of fundamental discriminants $d$ with $|d|\leq x$.

\begin{pro}\label{TailUnsmoothPrimes} Let $h(n)$ be a completely multiplicative function such that $|h(n)|\leq 1$ for all $n$. Let $Y= \exp(C\log_2 x \log _3 x/\log_4 x)$, for some suitably large constant $C$. Then, for any positive integer $2\leq k\leq \log x/(3\log Y)$ and real number $(k\log k)/10\leq y\leq k^2$ we have 
$$
\sum_{ p\leq x} \max_{\alpha\in [0,1)} \Bigg|\sum_{\substack{1\leq n\leq Y\\ P^{+}(n)> y}} \frac{\psi_p(n)h(n) e(\alpha n)}{n}\Bigg|^{2k} \ll \pi(x)\left(e^{O\left(\frac{ k\log_2k}{\log k}\right)} \left(\frac{2e^{2\gamma}k\log y}{ey}\right)^{k}+O\left(\frac{1}{(\log k)^{10k}}\right)\right).
$$
\end{pro}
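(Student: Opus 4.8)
The plan is to mirror the proof of Proposition~\ref{TailUnsmooth} line by line, replacing every use of the quadratic large sieve \eqref{BabyLargeSieve} for fundamental discriminants by the prime-discriminant large sieve of Lemma~\ref{LargeSieveMV}. The key structural observation is that Lemma~\ref{LargeSieveMV} is \emph{stronger} than \eqref{BabyLargeSieve}: it has no error term of size $N^2\log N$ and the coefficient $x/\log x$ already matches $\pi(x)$. So every place where the proof of Proposition~\ref{TailUnsmooth} produced a main term of the shape $x\cdot(\text{divisor sum})$ and a secondary error term of the shape $x/R^{1/2}$ or $x/2^{2jk}$, we will now obtain instead $\pi(x)\cdot(\text{divisor sum})$ and an error of the shape $\pi(x)/R^{1/2}$ etc., which is exactly what the statement demands.

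First I would reproduce the initial reduction: introduce $S_p(y,Y)=\max_{\alpha\in[0,1)}\big|\sum_{1<n\le Y,\,P^-(n)>y}\psi_p(n)h(n)e(\alpha n)/n\big|$, factor out the $y$-friable part via $\sum_{n\le Y,\,P^+(n)>y}=\sum_{a\le Y,\,P^+(a)\le y}\tfrac{\psi_p(a)h(a)}{a}\sum_{b\le Y/a,\,P^-(b)>y}\tfrac{\psi_p(b)h(b)e(\alpha ab)}{b}$, and apply H\"older to get $\sum_{p\le x}(\cdots)^{2k}\le(e^\gamma\log y+O(1))^{2k-1}\sum_{a\le Y,\,P^+(a)\le y}\tfrac1a\sum_{p\le x}S_p(y,Y/a)^{2k}$, exactly as in \eqref{HolderSmooth}. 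Then for the moments $\sum_{p\le x}S_p(y,z)^{2k}$ with $2\le y<z\le Y$, I split the range $y<n\le z$ at $N=\min(z,\exp(C\log k\log_2 k/\log_3 k))$, use Minkowski, and for the short part $1<n\le N$ apply Lemma~\ref{Reduction} with $R=\exp(2C\log k\log_2 k/\log_3 k)$; now invoking Lemma~\ref{LargeSieveMV} (legitimate since $N^k\le x^{1/3}$) gives
$$\sum_{p\le x}\Bigg|\sum_{\substack{1<n\le N^k\\ P^-(n)>y}}\frac{\psi_p(n)g_{1,N,k}(n,\alpha)}{n}\Bigg|^2\ll \frac{x}{\log x}\sum_{\substack{n>1\\ P^-(n)>y}}\frac{d_{2k}(n^2)}{n^2}\ll \pi(x)\,\mathcal{M}_y(k),$$
after which Proposition~\ref{MomentsUnsmooth} (with the hypothesis $(k\log k)/10<y$, which holds here) yields the bound $e^{O(k\log\log y/\log y)}(2k/(ey\log y))^k$ exactly as in \eqref{TailDivisorSum}. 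The long part $N<n\le z$ is handled identically to \eqref{HoldLongSum}--\eqref{EstimateDivisors15}: split into dyadic blocks $[t_j,t_{j+1}]$, apply H\"older in $j$, apply Lemma~\ref{Reduction} with $R=4^j$, invoke Lemma~\ref{LargeSieveMV} block by block (again $t_{j+1}^k\le Y^k\le x^{1/3}$), and use Rankin's trick with $\nu=\log_3 k/(2\log k)$ together with \eqref{Divisor1} to get geometric decay in $j$, summing to $\ll\pi(x)\exp(-(C/40)k\log_2 k)$. Assembling the two parts via Minkowski gives $\sum_{p\le x}S_p(y,z)^{2k}\ll\pi(x)\big(e^{O(\log_2 k/\log k)}(2k/(ey\log y))^{1/2}+O((\log k)^{-C/80})\big)^{2k}$ uniformly for $y<z\le Y$, and feeding this back into the H\"older bound above, combined with the elementary inequality $(a+b)^{2m}\le(a^{2m}+b^m)e^{2m\sqrt b}$, produces the claimed estimate with $\pi(x)$ in front.

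**The only genuinely new point**, and the place to be careful, is the bookkeeping of which power of $\log x$ the large sieve saves: here $x$ is replaced by $x/\log x\asymp\pi(x)$ directly, so — unlike the remark in the paragraph preceding the proposition, which reassures us that the savings in Propositions~\ref{Tail1} and~\ref{Tail2} dwarf a factor of $\log x$ — inside Proposition~\ref{TailUnsmoothPrimes} itself there is \emph{no loss at all}, and one should simply track that every $x$ coming out of Lemma~\ref{LargeSieveMV} is $\ll\pi(x)$ while every $x$ coming from a trivial ``error'' term (the $x^{1/2k}R^{-1/2}$ in Lemma~\ref{Reduction}, the $x/2^{2jk}$ from the off-diagonal-free trivial bound) is also $\ll x$ and hence $\ll\pi(x)\log x$, which is absorbed into the error terms since those carry super-polynomial savings in $\log k$ (or in $\log x$). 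In short, this is a routine ``plug-and-chug'' once the analogue of the large sieve is in hand, and **the hard work was already done** in Proposition~\ref{TailUnsmooth} and Proposition~\ref{MomentsUnsmooth}; the main obstacle is purely notational vigilance in not letting a stray $\log x$ spoil the constant in front.
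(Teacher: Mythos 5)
Your proposal is correct and follows essentially the same route as the paper's own proof of Proposition~\ref{TailUnsmoothPrimes}: the paper likewise introduces $\widetilde S_p(y,Y)$, reproduces \eqref{HolderSmooth} and \eqref{MinkUnsmooth} verbatim over primes, applies Lemma~\ref{Reduction} with the same choices of $R$, and substitutes Lemma~\ref{LargeSieveMV} for \eqref{BabyLargeSieve} in both the short-range estimate (yielding the analogue of \eqref{TailDivisorSum0}--\eqref{TailDivisorSum}) and the dyadic long-range estimate (analogue of \eqref{EstimateDivisors14}--\eqref{EstimateDivisors15}), using the same Rankin-trick exponent $\nu = \log_3 k/(2\log k)$ and the observation that $N^k, t_{j+1}^k \le Y^k \le x^{1/3}$ to legitimize the large sieve. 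The only small inaccuracy in your write-up is that the error term from Lemma~\ref{Reduction} is $|\mathcal D|^{1/2k}R^{-1/2} \asymp \pi(x)^{1/2k}R^{-1/2}$ rather than $x^{1/2k}R^{-1/2}$, so there is in fact no stray $\log x$ to absorb at all; but as you note this would be harmless anyway.
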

\begin{proof} 
We shall closely follow the proof of Proposition \ref{TailUnsmooth} and only indicates where the main changes occur. First we define
\begin{equation}\label{DefinitionSdPrimes}
\widetilde{S}_{p}(y, Y)= \max_{\alpha \in [0, 1)}\Bigg|\sum_{\substack{2\leq n\leq Y\\ P^{-}(n)> y}} \frac{\psi_p(n) h(n) e(\alpha n)}{n}\Bigg|.
\end{equation}
Then, similarly to \eqref{HolderSmooth} we have
\begin{equation}\label{HolderSmoothPrimes}
\sum_{p\leq x} \max_{\alpha\in [0,1)} \Bigg|\sum_{\substack{n\leq Y\\ P^{+}(n)> y}} \frac{\psi_p(n) h(n) e(\alpha n)}{n}\Bigg|^{2k}
 \leq \left(e^{\gamma} \log y +O(1)\right)^{2k-1}\sum_{\substack{a\leq Y\\ P^{+}(a)\leq  y}}\frac{1}{a} \sum_{p\leq x}\widetilde{S}_p(y, Y/a)^{2k}.
\end{equation}
Let $y<z\leq Y$ be a real number and $N$ be defined by \eqref{DefinitionCutN}. Using Minkowski's inequality as in \eqref{MinkUnsmooth} we get
\begin{equation}\label{MinkUnsmoothPrimes}
\begin{aligned}
\left(\sum_{p\leq x}\widetilde{S}_p(y, z)^{2k}\right)^{1/2k} 
& \leq \Bigg(\sum_{p\leq x}\max_{\alpha\in [0,1)} \Bigg|\sum_{\substack{2\leq n\leq N\\ P^{-}(n)> y}} \frac{\psi_p(n) h(n) e(\alpha n)}{n}\Bigg|^{2k}\Bigg)^{1/2k}\\
& \quad \quad + \Bigg(\sum_{p\leq x}\max_{\alpha\in [0,1)} \Bigg|\sum_{\substack{N<n\leq z\\ P^{-}(n)> y}} \frac{\psi_p(n) h(n) e(\alpha n)}{n}\Bigg|^{2k}\Bigg)^{1/2k}.
\end{aligned}
\end{equation}
 
We start by bounding the first term. Let $R=\exp(2C\log k \log_2k /\log_3 k)$. Using the same argument leading to \eqref{FirstTermHolder} and replacing the large sieve inequality \eqref{BabyLargeSieve} by Lemma \ref{LargeSieveMV}  we obtain
\begin{equation}\label{FirstTermHolderPrimes}
\begin{aligned}
&\Bigg(\sum_{p\leq x} \max_{\alpha\in [0,1)} \Bigg|\sum_{\substack{2\leq  n\leq N\\ P^{-}(n)>y}} \frac{\psi_p(n) h(n) e(\alpha n)}{n}\Bigg|^{2k}\Bigg)^{1/2k} \\
& \leq \pi(x)^{1/(2k)}\left(e^{O(\log_2k/\log k)} \left(\frac{2k}{ey\log y}\right)^{1/2} + O\left(\frac{1}{R^{1/2}}\right)\right).
\end{aligned}
\end{equation}

We now assume that $z>\exp(C\log k \log_2k /\log_3 k)$ and bound the second term on the right hand side of \eqref{MinkUnsmoothPrimes}. By \eqref{HoldLongSum} we have
\begin{equation}\label{HoldLongSumPrimes}
\Bigg|\sum_{\substack{N<n\leq z\\ P^{-}(n)> y}} \frac{\psi_p(n) h(n) e(\alpha n)}{n}\Bigg|^{2k} 
 \leq \left(\frac{C_1}{\log N} \right)^{2k+1} \sum_{J_1\leq j\leq J_2} j^{4k} \Bigg|\sum_{\substack{t_j<n\leq t_{j+1}\\ P^{-}(n)> y}} \frac{\psi_p(n) h(n) e(\alpha n)}{n}\Bigg|^{2k}, 
\end{equation}
for some constant $C_1>0$, where $J_1= \lfloor \log N/\log 2\rfloor, J_2= \lfloor \log z/\log 2\rfloor$, $t_{J_1}= N$, $t_{J_2+1}= z$, and $t_j= 2^j$ for $J_1+1\leq j\leq J_2$. As before we let $\mathcal{B}_j= \{b/4^j : 1\le b\leq 4^j \}.$ Combining Lemmas \ref{Reduction} and \ref{LargeSieveMV} with \eqref{EstimateDivisors14} and \eqref{EstimateDivisors15} we deduce that
\begin{align*}
&\sum_{p\leq x} \max_{\alpha\in [0,1)} \Bigg|\sum_{\substack{t_j<n\leq t_{j+1}\\ P^{-}(n)> y}} \frac{\psi_p(n) h(n) e(\alpha n)}{n}\Bigg|^{2k} \\
& \ll e^{O(k)}\sum_{\alpha \in \mathcal{B}_j}  \sum_{p\leq x} \Bigg|\sum_{\substack{t_j^k<n\leq t_{j+1}^k\\ P^{-}(n)> y}} \frac{\psi_p(n)g_{t_j, t_{j+1}, k}(n, \alpha)}{n}\Bigg|^{2}+ \frac{\pi(x)e^{O(k)}}{2^{2jk}}\\
& \ll \pi(x) \exp\left(-\frac{j k\log_3 k}{5\log k} +O(k\log_2 k)\right).
\end{align*}
Using this last estimate together with \eqref{HoldLongSumPrimes} we derive 
$$
\sum_{p\leq x} \max_{\alpha\in [0,1)} \Bigg|\sum_{\substack{N<n\leq z\\ P^{-}(n)> y}} \frac{\psi_p(n) h(n)e(\alpha n)}{n}\Bigg|^{2k} 
\ll \pi(x) \exp\left(-\frac{C_1}{40}k\log_2k\right).
$$
We have now established the key estimates over prime discriminants. Thus we continue the proof along the exact same lines as the proof of Proposition \ref{TailUnsmooth}, by combining this last estimate with \eqref{HolderSmoothPrimes}, \eqref{MinkUnsmoothPrimes} and \eqref{FirstTermHolderPrimes}. This yields the desired result. 
\end{proof}
We end this section by proving Theorem \ref{KeyResultPrimes}.
\begin{proof}[Proof of Theorem \ref{KeyResultPrimes}]
As before we let  $Y= \exp(C\log_2 x \log _3 x/\log_4 x)$ for some suitably large constant $C>0$, and put $L_1=\lfloor \log Y /\log 2\rfloor$, $L_2= \lfloor \log Z/\log 2\rfloor$, $s_{L_1}= Y$, $s_{L_2+1}= Z$, and $s_{\ell}=2^{\ell}$ for $L_1+1\leq \ell\leq L_2$. By \eqref{PolyaSplit} we have
\begin{align*}
\max_{\alpha \in [0, 1)} \Bigg|\sum_{\substack{1\leq n\leq Z\\P^+(n)>y}} \frac{\psi_p(n) h(n) e(\alpha n)}{n}\Bigg| 
&  \leq  \max_{\alpha \in [0, 1)} \Bigg|\sum_{\substack{1\leq n\leq Y\\ P^{+}(n)> y}} \frac{\psi_p(n) h(n) e(\alpha n)}{n}\Bigg| \\
& + \sum_{L_1\leq \ell \leq L_2} \max_{\alpha \in [0, 1)} \Bigg|\sum_{\substack{s_{\ell}\leq n\leq s_{\ell+1}\\ P^+(n)>y}} \frac{\psi_p(n) h(n) e(\alpha n)}{n}\Bigg|.
\end{align*}
By embedding the set of primes $3\leq p\leq x$ into the set of  fundamental discriminants $|d|\leq x$ (since for any such prime, $p$ or $-p$ is a fundamental discriminant), it follows from \eqref{BoundTailLarge} that 
\begin{equation}\label{BoundTailLarge2}
\sum_{L_1\leq \ell \leq L_2} \max_{\alpha \in [0, 1)} \Bigg|\sum_{\substack{s_{\ell}\leq n\leq s_{\ell+1}\\ P^+(n)>y}} \frac{\psi_p(n) h(n) e(\alpha n)}{n}\Bigg|
 \ll \frac{\log_4 x}{\log_2 x\log_3 x},
\end{equation}
for all primes $p\leq x$ except for a set $\mathcal{E}_1(x)$ of size 
$ |\mathcal{E}_1(x)| \ll  x \exp\left(-\frac{\log x \log_4 x}{20 \log_2 x}\right).$ The result follows along the same exact lines of the proof of Theorem \ref{KeyResult}, by replacing Proposition \ref{TailUnsmooth} by Proposition \ref{TailUnsmoothPrimes}. 
\end{proof}

 
 \section{The distribution of $L(1,\psi_p)$: proof of Theorem \ref{LamzouriL1chi}}

 \subsection{Proof of the estimate \eqref{EstimateDistribL1psi}}

In the range $2\leq \tau \leq (\log_2x)/2-2\log_3x$, we shall use the work of the author \cite{La17} where we proved an asymptotic formula for complex moments of $L(1,\psi_p)$ involving a secondary term coming from a possible Laudau-Siegel exceptional discriminant. Indeed, it follows from Theorem 1.2 of \cite{La17} that for all real numbers $2\leq k\leq \sqrt{\log x}/(\log_2 x)^2$ we have 
\begin{equation}\label{MomentsL1chiPrimes}
\frac{2}{\textup{Li}(x)}\sum_{\substack{p\leq x\\ p\equiv 3 \bmod 4}} L(1, \psi_p)^k= \ex\left(L(1, \X)^k\right) + E_1+ O\left(\exp\left(-\frac{\sqrt{\log x}}{10\log\log x}\right)\right),
\end{equation}
where 
$$ |E_1|\leq \big|\ex\big(\X(|d_1|)L(1, \X)^k\big)\big|, $$
and where $d_1$ is the possible ``exceptional'' discriminant\footnote{By the Landau-Page Theorem (see Chapter 20 of \cite{Da}), there is at most one square-free integer $d_1$ such that $|d_1|\leq \exp(\sqrt{\log x})$ and $L(s, \chi_{d_1})$ has a zero in the region 
$
\re(s)>1-c/\sqrt{\log x},
$
 for some positive constant $c$. If it exists, we refer to such $d_1$ as the exceptional discriminant in the range $|d_1|\leq \exp(\sqrt{\log x})$.} with $|d_1|\leq \exp(\sqrt{\log x})$, and 
 $$ L(1, \X):=\prod_{q \text{ prime}}\left(1-\frac{\X(q)}{q}\right)^{-1},$$ 
 with the $\X(q)$ being I.I.D. random variables taking the values $\pm 1$ with probability $1/2$. We cannot rule out that the term $E_1$ is of the same size as the main term, but we will prove that it will not heavily affect the size of the $k$-th moment of $L(1, \psi_p)$ if $k$ is large. Indeed, we observe that 
\begin{equation}\label{BoundErrorSiegel}
\frac{\ex\big(\X(|d_1|)L(1, \X)^k\big)}{\ex\big(L(1, \X)^k\big)}= \prod_{q\mid d_1} \frac{\left(1-\frac{1}{q}\right)^{-k}-\left(1+\frac{1}{q}\right)^{-k}}{\left(1-\frac{1}{q}\right)^{-k}+\left(1+\frac{1}{q}\right)^{-k}}= \prod_{q\mid d_1} \frac{1-\delta_q}{1+\delta_q},
\end{equation}
 where $\delta_q= \left(1-\frac{2}{q+1}\right)^k$. Let $\ep>0$ be a suitably small constant. Let $q_1$ be the largest prime factor of $d_1$. Since $d_1$ is square-free and $d_1>(\log x)^{10/\ep}$, if $x$ is large enough, by Siegel's Theorem, we must have $q_1>(\log |d_1|)/2>(5/\ep)\log\log x$, since otherwise $|d_1|<\prod_{q\leq (\log |d_1|)/2}q= |d_1|^{1/2+o(1)}$ by the Prime Number Theorem, which is a contradiction. Inserting this estimate in \eqref{BoundErrorSiegel} gives  
\begin{equation}\label{BoundErrorSiegel2}
 0< \frac{\ex\left(\X(|d_1|)L(1, \X)^k\right)}{\ex\left(L(1, \X)^k\right)} \leq 1-\delta_{q_1}=1 -\left(1-\frac{2}{q_1+1}\right)^k \leq 1-\exp\left(-\ep\frac{k}{\log k}\right),
\end{equation}
 for all real numbers $2\leq k\leq \sqrt{\log x}/(\log_2 x)^2.$ Now, it follows from Proposition 1.2 of \cite{La10} that 
\begin{equation}\label{EstimationMomentL1Random}
\ex\left(L(1, \X)^k\right)= \exp\left(k\log_2k+ k\gamma+ \frac{k}{\log k} \left(B_0-1+O\left(\frac{1}{\log k}\right)\right)\right).
\end{equation}
Combining this estimate with \eqref{MomentsL1chiPrimes} and \eqref{BoundErrorSiegel2} we obtain
 \begin{equation}\label{AsymptoticL1chiPrime}
 \frac{2}{\pi(x)}\sum_{\substack{p\leq x\\ p\equiv 3 \bmod 4}} L(1, \psi_p)^k
= \exp\left(k\log_2k+ k\gamma+ \frac{k}{\log k} \left(B_0-1+O\left(\ep\right)\right)\right).
 \end{equation}
Finally, the estimate for the distribution function \eqref{EstimateDistribL1psi} follows from the proof of  Theorem 0.1 of \cite{La10} (which holds for general random models of this type), with the choice $\tau= \log k+B_0$. This completes the proof of \eqref{EstimateDistribL1psi} in the case $a=3$. We also note that the case $a=1$ is similar, since one can derive the same estimate as \eqref{MomentsL1chiPrimes} over the primes $p\equiv 1\bmod 4$ using the method of \cite{La17}.

We now show how to obtain the same estimate for the proportion of primes $p\equiv a \bmod 4$ such that $L(1, \psi_p\chi_{-3})>(2e^{\gamma}/3)\tau$.  
First, by a slight adaptation of the proof of Theorem 1.2  of \cite{La17} we get
\begin{equation}\label{MomentsL1chiPrimes2}
\frac{2}{\textup{Li}(x)}\sum_{\substack{p\leq x\\ p\equiv a \bmod 4}} L(1, \psi_p\chi_{-3})^k= \ex\left(L(1, \X\chi_{-3})^k\right) + E_2+ O\left(\exp\left(-\frac{\sqrt{\log x}}{10\log\log x}\right)\right),
\end{equation}
where 
$$ |E_2|\leq \big|\ex\big(\X(|d_1|)L(1, \X\chi_{-3})^k\big)\big|, $$
and where $d_1$ is the exceptional discriminant (if it exists) with $|d_1|\leq \exp\left(\sqrt{\log x}\right)$, and 
$$ L(1, \X\chi_{-3}):=\prod_{q \text{ prime}}\left(1-\frac{\X(q)\chi_{-3}(q)}{q}\right)^{-1}.$$
By the independence of the $\X(q)$'s we observe that 
$$  
\ex\left(L(1, \X\chi_{-3})^k\right)= \prod_{q\neq 3} \ex\left(\left(1-\frac{\X(q)}{q}\right)^{-k}\right)= \left(\frac{2}{3}\right)^{k}\left(\frac{2}{1+2^{-k}} \right)\ex\left(L(1, \X)^k\right).
$$
Furthermore, if we fix a suitably small constant $\ep>0$, then a similar argument leading to \eqref{BoundErrorSiegel2} gives 
\begin{equation}\label{BoundErrorSiegel3}
|E_2|\leq \left(1-\exp\left(-\ep\frac{k}{\log k}\right)\right)\ex\left(L(1, \X\chi_{-3})^k\right),
\end{equation}
if $x$ is large enough. Thus by \eqref{EstimationMomentL1Random} we deduce that 
$$ \frac{2}{\pi(x)}\sum_{\substack{p\leq x\\ p\equiv a \bmod 4}} \left(\frac{3}{2}L(1, \psi_p\chi_{-3})\right)^k=\exp\left(k\log_2k+ k\gamma+ \frac{k}{\log k} \left(B_0-1+O\left(\ep\right)\right)\right).$$
Finally, the result follows from the proof of Theorem 0.1 of \cite{La10}. 

\begin{rem}\label{PreciseLowerConditional2}
 We observe that assuming GRH,  Holmin, Jones, Kurlberg, McLeman and Petersen \cite{HJKMP} established the asymptotic formula \eqref{MomentsL1chiPrimes} without the term $E_1$ in the larger range $k\leq (\log x)/(50 (\log\log x)^2)$. This justifies Remark \ref{PreciseLowerConditional}.

\end{rem} 


\subsection{Proof of \eqref{EstimateDistribL1psi2}}
 
In the range $(\log_2x)/2-2\log_3x\leq \tau\leq \log_2 x-\log_3x-C_2$, we need to use a different argument since asymptotic formulas for very large moments of $L(1, \psi_p)$ are not known, due to the lack of strong unconditional bounds on character sums over primes. In this case, our strategy is to use Theorem \ref{KeyResultPrimes} to truncate $L(1, \psi_p)$ over $y$-friable integers, and then control the behaviour of $\psi_p(q)$ over the  primes $q\leq y$. To this end we establish the following lemma which follows from Bombieri's proof  of Linnik's Theorem (see Chapter 5 of \cite{Bo}). 
 \begin{lem}\label{SignsLegendreLong}
Let $\{\ep_q\}_{q \text{ prime}}$ be a sequence of $\pm 1$, and  $a\in \{1, 3\}$. Let $x$ be large and $3\leq y\leq c_0 \log x$ be a real number, where $c_0$ is a suitably small constant. Let $\mathcal{P}(x, y, a, \{\ep_q\})$ be the set of primes $p\equiv a \bmod 4$ with $p\leq x$ such that $\psi_p(q)=\ep_q$ for all primes $q \leq y$. Then we have 
$$ 
|\mathcal{P}(x, y, a, \{\ep_q\})|\gg \pi(x) \exp\left(-3y +O\left(\frac{y}{\log y}\right)\right).
$$
 
\end{lem}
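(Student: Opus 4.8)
The statement is a quantitative lower bound for the number of primes $p\le x$ with $p\equiv a\bmod 4$ whose Legendre symbol takes a prescribed pattern of signs on all primes $q\le y$, valid for $y$ up to a small constant times $\log x$. By quadratic reciprocity, the condition $\psi_p(q)=\ep_q$ for every prime $q\le y$ is equivalent to prescribing the residue class of $p$ modulo $8q$ for each odd prime $q\le y$ (the factor $8$ handles both the prescribed class of $p\bmod 4$ and the value $\psi_p(2)$, which depends on $p\bmod 8$). By the Chinese Remainder Theorem this amounts to restricting $p$ to a single residue class $a_0$ modulo $Q:=8\prod_{2<q\le y}q$. Thus $|\mathcal{P}(x,y,a,\{\ep_q\})| = \pi(x;Q,a_0)$, and the task reduces to a lower bound for the number of primes up to $x$ in a fixed progression modulo $Q$, where $Q = e^{(1+o(1))y}$ by the Prime Number Theorem, so $Q\le x^{o(1)}$ when $y\le c_0\log x$ with $c_0$ small.

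The main tool is the explicit, fully effective version of Linnik's theorem coming from Bombieri's treatment via log-free zero-density estimates for Dirichlet $L$-functions (Chapter 5 of \cite{Bo}). The plan is to invoke the quantitative statement that, for $Q$ in the range $Q\le (\log x)^{c}$ or more generally $Q$ a small power of $\log x$, one has
$$
\pi(x;Q,a_0)\gg \frac{\pi(x)}{\varphi(Q)}
$$
— that is, the expected main term holds with no exceptional-zero loss once $Q$ is this small, because Bombieri's zero-density bound combined with the classical zero-free region rules out zeros close to $1$ for $L$-functions of conductor $\le (\log x)^{A}$. (Alternatively, one can cite Linnik's theorem itself in the form $\pi(x;Q,a_0)\gg \pi(x)/\varphi(Q)$ valid for $Q\le x^{1/L}$ with $L$ Linnik's constant; since $Q=e^{O(y)}=x^{o(1)}$ this applies directly, and the effectivity is what matters here.) It then remains to estimate $\varphi(Q)$: since $Q = 8\prod_{2<q\le y}q$, we have $\varphi(Q) = 4\prod_{2<q\le y}(q-1) \le Q = \exp\!\big(\sum_{q\le y}\log q\big) = \exp\!\big(y + O(y/\log y)\big)$ by Mertens/Chebyshev estimates (more precisely $\theta(y)=y+O(y/\log y)$). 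This gives $1/\varphi(Q)\gg \exp(-y - O(y/\log y))$, which is \emph{better} than the claimed $\exp(-3y+O(y/\log y))$; the factor $3$ in the exponent is therefore harmless slack, presumably left in place to absorb the lossier constants that actually appear in Bombieri's effective Linnik bound (where the implied lower bound may be $\pi(x)/\varphi(Q)^{C}$ or $\pi(x;Q,a_0)\gg x^{1-\epsilon}/\varphi(Q)$ for a range of $Q$ that is a fixed power of $\log x$), and one should simply track which exponent on $\varphi(Q)$ Bombieri's argument delivers and check it is $\le 3$ after taking $c_0$ small enough. The restriction $y\le c_0\log x$ is exactly what keeps $Q=\exp(O(y))$ inside the admissible range of the effective Linnik estimate.

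Concretely the steps are: (i) translate the sign conditions $\psi_p(q)=\ep_q$ into congruence conditions on $p$ using quadratic reciprocity and the supplementary laws (being careful with the primes $2$ and the congruence $p\equiv a\bmod 4$, all of which live modulo $8$), and apply CRT to get a single class $a_0\bmod Q$ with $Q=8\prod_{2<q\le y}q$; (ii) record $\log Q = \theta(y)+O(1) = y + O(y/\log y)$ and $\varphi(Q)\le Q$; (iii) apply the effective form of Linnik's theorem from \cite{Bo} to bound $\pi(x;Q,a_0)$ from below by $\pi(x)/\varphi(Q)^{C}$ (for the appropriate absolute $C$ that Bombieri's method yields) — valid because $Q$ is a sufficiently small power of $\log x$ when $c_0$ is small; (iv) combine (ii) and (iii) to conclude $|\mathcal{P}(x,y,a,\{\ep_q\})|\gg \pi(x)\exp(-Cy+O(y/\log y))$ and note $C\le 3$. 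The main obstacle is purely bookkeeping: extracting from \cite{Bo} the precise effective dependence (what power of $\varphi(Q)$ or what error term, over what exact range of $Q$ relative to $x$) and verifying that the numerology closes with the constant $3$ and the stated range $y\le c_0\log x$; the analytic content — log-free density estimates plus the Deuring–Heilbronn phenomenon handled effectively for small conductors — is entirely supplied by Bombieri's proof of Linnik's theorem and need not be reproved.
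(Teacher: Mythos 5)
Your proposal follows essentially the same route as the paper: translate the sign conditions $\psi_p(q)=\ep_q$ via quadratic reciprocity into a single residue class modulo $Q=8\prod_{2<q\le y}q$, estimate $\log Q = \vartheta(y)+O(1)= y+O(y/\log y)$, and invoke Bombieri's effective version of Linnik's theorem to bound $\pi(x;Q,b)\gg\pi(x)/Q^3$ for $Q$ at most a small power of $x$, which is exactly where the constant $3$ comes from. One small caveat: your ``alternative'' citation of Linnik in the form $\pi(x;Q,a_0)\gg\pi(x)/\varphi(Q)$ for $Q\le x^{1/L}$ is not the standard statement and is not what you actually need, but since you correctly default to Bombieri's $\pi(x;Q,a_0)\gg \pi(x)/Q^3$ the argument goes through as in the paper.
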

\begin{proof}
First recall that if $p\equiv 3 \bmod 4$ then $\psi_p(2)=1$ if $p\equiv 7 \bmod 8$, and equals $-1$ if $p\equiv 3 \bmod 8$. Similarly, if $p\equiv 1 \bmod 4$ then $\psi_p(2)=1$ if $p\equiv 1 \bmod 8$, and $\psi_p(2)=-1$ if $p\equiv 5 \bmod 8$. Let $3\leq q\leq y$ be a prime number. By the law of quadratic reciprocity, there exists a residue class $b_q \bmod q$ such that if $p\equiv b_q\bmod q$ then $\psi_p(q)= \ep_q$. We now define $Q:= 8\prod_{3\leq q\leq y} q$. Then, by the Chinese Remainder Theorem there exists a residue class $b\bmod Q$ such that if $p\equiv b\bmod Q$ then $p\equiv a \bmod 4$ and $\psi_p(q)=\ep_q$ for all primes $q \leq y$. By Bombieri's proof of Linnik's Theorem (see Chapter 6 of \cite{Bo}) we deduce that there exists a small constant $c>0$ such that if $x>Q^{1/c}$ then
$$ |\mathcal{P}(x, y, a, \{\ep_q\})| \geq \pi(x; Q, b) \gg \frac{\pi(x)}{Q^{3}}.$$
The result follows upon noting that $Q=\exp(y(1+O(1/\log y)))$ by the Prime Number Theorem.

\end{proof}


\begin{proof}[Proof of the lower bound \eqref{EstimateDistribL1psi2}]  
Let $Z=x^{21/40}$. By \eqref{PolyaUniformTail} we have $$L(1, \psi_p)= \sum_{n\leq Z}\frac{\psi_p(n)}{n} +O(1),$$
for all primes $p\leq x$.
Let $c_0$ be the constant in Lemma \ref{SignsLegendreLong}, and $3\leq y\le c_0 \log x$ be a parameter to be chosen. Define $\mathcal{P}(y)$ to be the set of primes $p\leq x$ such that $p\equiv a\bmod 4$, $\psi_p(q)=1$ for all primes $q\leq y$ and 
\begin{equation}\label{AssumptionPY}
 \Bigg|\sum_{\substack{1\leq n\leq Z\\P^{+}(n)> y_1}} \frac{\psi_p(n) }{n}\Bigg|\leq e^{\gamma}, 
\end{equation}
where $y_1:=10 y\log y.$ Combining Lemma \ref{SignsLegendreLong}  with Theorem \ref{KeyResultPrimes}  yields 
\begin{equation}\label{LowerBPY}
|\mathcal{P}(y)| \gg \pi(x) \exp\left(-4y\right).
\end{equation}
On the other hand, by \eqref{PolyaUniformTail3} and \eqref{AssumptionPY} we deduce that for every prime $p\in \mathcal{P}(y)$ we have
\begin{align*}
L(1, \psi_p)&=\sum_{\substack{1\leq n\leq Z\\ P^{+}(n)\leq y_1}} \frac{\psi_p(n)}{n} +O(1)= \sum_{\substack{n\geq 1\\ P^{+}(n)\leq y_1}} \frac{\psi_p(n)}{n} +O(1)\\
&= \prod_{q\leq y} 
\left(1-\frac{1}{q}\right)^{-1} \exp\left(\sum_{y<q\leq y_1}\frac{\psi_p(q)}{q}+O\left(\frac{1}{y}\right)\right)+O(1)\\
& \geq e^{\gamma}\log y \cdot \exp\left(-\sum_{y<q\leq y_1}\frac{1}{q}\right)\left(1+O\left(\frac{1}{\log y}\right)\right)+O(1)\\
& \geq e^{\gamma} \log y - e^{\gamma} \log\log y +O(1), 
\end{align*}
by Mertens' Theorem. Combining the above estimates and choosing $y= C\tau e^{\tau}$ for some large constant $C$, implies that  $L(1, \psi_p)> e^{\gamma}\tau$ whenever $p\in \mathcal{P}(y)$. Appealing to \eqref{LowerBPY} completes the proof.

\end{proof}

We end this section by proving the upper bound of \eqref{EstimateDistribL1psi2} which we deduce from Theorem \eqref{KeyResultPrimes}. 

\begin{proof}[Proof of the upper bound of \eqref{EstimateDistribL1psi2}]
Let $Z=x^{21/40}$ and $\delta=1/100$. Let $y=e^{\tau-B}$, where $B$ is a parameter to be chosen later. First by \eqref{PolyaUniformTail} we observe that 
\begin{equation}\label{UpperL1psiTrunc}
\begin{aligned}
L(1, \psi_p) & = \sum_{n\leq Z}\frac{\psi_p(n)}{n} +O(x^{-\delta})  \leq  \sum_{\substack{n\geq 1\\ P^{+}(n)\leq y}} \frac{1}{n}+ \Bigg|\sum_{\substack{1\leq n\leq Z\\ P^{+}(n)>y}} \frac{\psi_p(n)}{n}\Bigg|+O(x^{-\delta}) \\
&\leq e^{\gamma}\tau- e^{\gamma} B+ \Bigg|\sum_{\substack{1\leq n\leq Z\\ P^{+}(n)>y}} \frac{\psi_p(n)}{n}\Bigg|+O\left(\frac{1}{ \tau}\right).
\end{aligned}
\end{equation}
since $\prod_{p\leq y}(1-1/p)^{-1}= e^{\gamma}\log y+O(1/\log y)$ by the Prime Number Theorem. Therefore, there exists a positive constant $c$ such that the proportion of primes $p\leq x$ with $p\equiv a \bmod 4$ and such that $L(1, \psi_p)\geq e^{\gamma} \tau$ is bounded by the proportion of primes $p\leq x$ such that 
$$  \Bigg|\sum_{\substack{1\leq n\leq Z\\ P^{+}(n)>y}} \frac{\psi_p(n)}{n}\Bigg| \geq e^{\gamma} \left(B- \frac{c}{\tau}\right).$$
Thus, appealing to Theorem \ref{KeyResultPrimes} with $B= 2+c/\tau$ completes the proof of \eqref{EstimateDistribL1psi2} for $L(1, \psi_p)$. Finally, the analogous result for $L(1, \psi_p\chi_{-3})$ follows along the same lines upon taking $h(n)=\chi_{-3}(n)$ in Theorem \ref{KeyResultPrimes} and noting that 
$$\Bigg|\sum_{\substack{1\leq n\leq Z\\ P^{+}(n)\leq y}} \frac{\psi_p(n)\chi_{-3}(n)}{n}\Bigg|\leq \prod_{\substack{q\neq 3 \\ q\leq y}}\left(1-\frac{1}{q}\right)^{-1}= \frac{2e^{\gamma}}{3} \log y + O\left(\frac{1}{\log y}\right), 
$$ 
by the Prime Number Theorem.
\end{proof}

\section{Positivity of sums of the Legendre symbol : Proof of Theorems \ref{PositivityLegendreTheorem} and \ref{NegativityLegendreTheorem} }

Let $Z=x^{21/40}$ and $\delta=1/100$. Let $p\leq x$ be a prime number such that $p \equiv 3 \bmod 4$. Since $\psi_p$ is odd and
$\tau(\psi_p)=i\sqrt{p}$, then by \eqref{PolyaFourier}  we have for any $\alpha\in (0, 1)$ 
\begin{equation}\label{PolyaFourier2}
\sum_{n\leq \alpha p } \psi_p(n)= \frac{\sqrt{p}}{\pi}\sum_{1\leq n\leq Z} \frac{\psi_p(n) (1-\cos(2\pi n\alpha))}{n} +O\left(p^{1/2-1/40}\log p\right).
\end{equation}
We shall now focus on the sum 
$$
F(\alpha, p):=\sum_{1\leq n\leq Z} \frac{\psi_p(n) (1-\cos(2\pi n\alpha))}{n}.
$$
Our strategy for proving Theorems \ref{PositivityLegendreTheorem} and \ref{NegativityLegendreTheorem} consists in using Theorem \ref{KeyResultPrimes} to bound the part of this sum over non-friable integers uniformly over $\alpha\in [0,1)$, for all primes $p\leq x$ except for a small set of exceptions, and then prescribe the signs of $\psi_p(q)$ for the small primes $q$. 

\begin{proof}[Proof of Theorem \ref{PositivityLegendreTheorem}]
We start by proving \eqref{PreciseLargeLambda}. Let $Z=x^{21/40}$ and $2\leq y\leq \log x$ be a parameter to be chosen. 
 Let $\mathcal{E}_1(x)$ be the set of primes $\sqrt{x}\leq p\leq x$ such that $p\equiv 3\bmod 4$ and    
 $$ 
\max_{\alpha\in [0, 1)}\Bigg|\sum_{\substack{1\leq n\leq Z\\P^{+}(n)>y}} \frac{\psi_p(n) (1-\cos(2\pi n\alpha))}{n}\Bigg|>  2e^{\gamma}.
$$
Then it follows from Theorem \ref{KeyResultPrimes}  that 
\begin{equation}\label{SizeExpSetPrimes}
|\mathcal{E}_1(x)| \ll \pi(x)\exp\left(-\frac{y}{3\log y}\right).
\end{equation}
We now define $\mathcal{A}(y)$ to be the set of primes $\sqrt{x}\leq p\leq x$ such that $p\equiv 3\bmod 4$ and $\psi_p(q)=1$ for all primes $q\leq y_0$, where $y_0:= y/(20\log y)$. Moreover, we put $ \mathcal{D}(y):= \mathcal{A}(y)\setminus \mathcal{E}_1(x).$ By Lemma \ref{SignsLegendreLong} and the estimate \eqref{SizeExpSetPrimes} we have $|\mathcal{D}(y)|\gg \pi(x)\exp(-y/(4\log y))$. Moreover, if $p\in \mathcal{D}(y)$ then it follows from \eqref{PolyaUniformTail3} that for all $\alpha\in (0, 1)$ we have 
\begin{equation}\label{BoundFalpha2}
\begin{aligned}
F(\alpha, p)
 &=\sum_{\substack{1\leq n\leq Z\\P^{+}(n)\leq y}} \frac{\psi_p(n)(1-\cos(2\pi n\alpha))}{n} +O(1)= 
 \sum_{\substack{n\geq 1\\P^{+}(n)\leq y}} \frac{\psi_p(n)(1-\cos(2\pi n\alpha))}{n} +O(1)\\
 &= \sum_{\substack{n\geq 1\\P^{+}(n)\leq y_0}} \frac{1-\cos(2\pi n\alpha)}{n} +O(\log_2 y)=\sum_{\substack{n\geq 1\\P^{+}(n)\leq y}} \frac{1-\cos(2\pi n\alpha)}{n} +O(\log_2 y),
 \end{aligned}
\end{equation}
since 
$$ \sum_{\substack{n\geq 1\\y_0< P^{+}(n)\leq y}} \frac{1}{n}= \prod_{p\leq y}\left(1-\frac{1}{p}\right)^{-1}- \prod_{p\leq y_0}\left(1-\frac{1}{p}\right)^{-1}\ll \log_2 y, $$
by Mertens' Theorem. Furthermore, by Lemma 3.4 of  \cite{BGGK} we have 
\begin{equation}\label{BoundCosFriable}
\Bigg|\sum_{\substack{n\geq 1\\P^{+}(n)\leq y}} \frac{\cos(2\pi n\alpha)}{n}\Bigg| \leq \Bigg|\sum_{\substack{n\geq 1\\P^{+}(n)\leq y}}\frac{e(n\alpha)}{n}\Bigg|= \sum_{\substack{n\leq 1/||\alpha||\\ P^{+}(n)\leq y}} \frac1n+O(1),
\end{equation}
where $||\cdot||$ denotes the distance to the nearest integer. We now  write $||\alpha||= y^{-u}$ for some $u>0$. By Lemma 3.3 of \cite{BGGK} we get 
$$ \sum_{\substack{n\leq 1/||\alpha||\\ P^{+}(n)\leq y}}\frac1n = \sum_{\substack{n\leq y^u\\ P^{+}(n)\leq y}}\frac1n = (\log y)\int_0^u \rho(t) dt +O(1), $$
where $\rho$ is the Dickman?de Bruijn function, defined by $\rho(t)$ for $0\leq t\leq 1$, and 
$t\rho'(t)= - \rho(t-1)$ for $t> 1$. Combining this estimate with \eqref{BoundFalpha2} and \eqref{BoundCosFriable} we obtain 
\begin{equation}\label{LowerBFalphap} F(\alpha, p) \geq \log y\left(e^{\gamma} - \int_0^u \rho(t) dt\right) +O(\log_2 y), \
\end{equation}
for all $p\in \mathcal{D}(y)$ and all $\alpha\in (0,1)$, where $u= -\log ||\alpha||/\log y$. Note that $\int_0^u\rho(t) dt$ is increasing in $u$ and that $\int_0^{\infty} \rho(t) dt =e^{\gamma}$. Moreover, one has the estimate 
\begin{equation}\label{EstimateRho1} e^{\gamma} - \int_0^u \rho(t) dt = \int_{u}^{\infty} \rho(t) dt= \frac{1}{u^{u(1+o(1))}},
\end{equation}
which follows from the standard estimate $\rho(t)= t^{-t(1+o(1))}$. Let $C$ be a suitably large constant, and $u_0$ be the solution of the equation 
\begin{equation}\label{EstimateRho2}\int_{u_0}^{\infty} \rho(t) dt = \frac{C\log_2 y}{\log y}.
\end{equation}
Then, it follows from \eqref{LowerBFalphap} that for  all $p\in \mathcal{D}(y)$ and $||\alpha||> y^{-u_0}$ we have $F(\alpha, p) \geq 10$ if $C$ is suitably large, and hence
$$ \sum_{n\leq \alpha p } \psi_p(n)>\sqrt{p} $$
 by \eqref{PolyaFourier2} if $x$ is large enough. To finish the proof, we choose $y$ such that $T=y^{u_0}/2$, which implies that $\lambda(p)>1-1/T$ for all $p\in \mathcal{D}(y)$.  Moreover, it follows from the estimates \eqref{EstimateRho1} and \eqref{EstimateRho2} that $u_0= (1+o(1))\log_2 y/\log_3 y$, and hence we get 
$$ y= \exp\left(\frac{(1+o(1))\log T\log_3 T}{\log_2 T}\right) , $$
as desired.

\end{proof}


In order to prove \eqref{PreciseSmallLambda} for small $T$, we require a more precise estimate than the one provided by Lemma \ref{SignsLegendreLong}. To this end we prove the following result, which gives an asymptotic formula for the cardinality of the set $\mathcal{P}(x, y, a, \{\ep_q\})$ in Lemma \ref{SignsLegendreLong}, in the smaller range  $3\leq y \leq \log\log x$. 


\begin{lem}\label{SignsLegendre}
Let $\{\ep_q\}_{q \text{ prime}}$ be a sequence of $\pm 1$, and  $a\in \{1, 3\}$. Let $x$ be large and $3\leq y\leq \log\log x$ be a real number. Let $\mathcal{P}(x, y, a, \{\ep_q\})$ be the set in Lemma \ref{SignsLegendreLong}. Then we have 
$$ 
|\mathcal{P}(x, y, a, \{\ep_q\})|=\frac{\pi(x)}{2^{\pi(y)+1}}+ O\left(xe^{-c\sqrt{\log x}}\right),
$$
for some positive constant $c$. 
\end{lem}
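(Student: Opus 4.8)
The plan is to count primes $p \le x$ with $p \equiv a \bmod 4$ and $\psi_p(q) = \ep_q$ for all primes $q \le y$ by expressing the condition ``$\psi_p(q) = \ep_q$'' as a congruence condition on $p$ modulo a fixed modulus, and then invoking the Prime Number Theorem for arithmetic progressions in the strong Siegel--Walfisz form. First I would set up the modulus: by quadratic reciprocity, for each odd prime $q \le y$ the value $\psi_p(q) = \left(\frac{q}{p}\right)$ is determined by the residue class of $p$ modulo $q$ (using that $p \equiv 3 \bmod 4$, or more precisely combining with the class of $p$ mod $4$ when the reciprocity sign depends on it), and the value $\psi_p(2)$ is determined by the class of $p$ modulo $8$. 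So prescribing $\psi_p(q) = \ep_q$ for all primes $q \le y$, together with $p \equiv a \bmod 4$, amounts to restricting $p$ to a union of residue classes modulo $Q := 8 \prod_{3 \le q \le y} q$. The key point is to count the admissible classes: each odd prime $q$ contributes exactly $(q-1)/2$ admissible classes mod $q$ (those $b$ with $\left(\frac{q}{\cdot}\right)$ or $\left(\frac{\cdot}{q}\right)$ equal to $\ep_q$, via reciprocity), the prime $2$ contributes the right classes mod $8$, and by the Chinese Remainder Theorem the number of admissible classes mod $Q$ is exactly $\prod_{3 \le q \le y} \frac{q-1}{2}$ (after fixing $p \bmod 8$ to pin down both $p \bmod 4 = a$ and $\psi_p(2)$; note $\psi_p(2)$ is forced by the choice, so among the two classes mod $8$ consistent with $p \equiv a \bmod 4$ exactly one works).

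Next I would apply the Siegel--Walfisz theorem. Since $y \le \log\log x$, we have $Q = \exp\big(y(1 + O(1/\log y))\big) \le \exp\big((\log\log x)(1+o(1))\big)$, which is far smaller than any fixed power of $\log x$, so Siegel--Walfisz applies with a power-saving error: for each admissible class $b \bmod Q$,
$$
\pi(x; Q, b) = \frac{\pi(x)}{\varphi(Q)} + O\big(x e^{-c'\sqrt{\log x}}\big)
$$
for some $c' > 0$ (the implied constant being ineffective due to Siegel's theorem). Summing over the $\prod_{3 \le q \le y}\frac{q-1}{2}$ admissible classes and noting $\varphi(Q) = \varphi(8)\prod_{3 \le q \le y}(q-1) = 4 \prod_{3 \le q \le y}(q-1)$, the main term becomes
$$
\frac{\pi(x)}{4\prod_{3\le q\le y}(q-1)} \cdot \prod_{3\le q\le y}\frac{q-1}{2} = \frac{\pi(x)}{4}\cdot 2^{-(\pi(y)-1)} = \frac{\pi(x)}{2^{\pi(y)+1}},
$$
which is exactly the claimed main term. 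The error term, after multiplying the individual error by the number of classes (which is at most $Q \le e^{(1+o(1))\log\log x}$, easily absorbed into $e^{-c\sqrt{\log x}}$ after adjusting $c$), is $O\big(x e^{-c\sqrt{\log x}}\big)$.

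The main obstacle, and the step requiring the most care, is the bookkeeping of admissible residue classes: one must handle correctly the interaction between the prescription of $p \bmod 4 = a$, the prescribed value $\psi_p(2) = \ep_2$, and the reciprocity sign $(-1)^{\frac{p-1}{2}\cdot\frac{q-1}{2}}$ appearing in $\left(\frac{q}{p}\right) = \left(\frac{p}{q}\right)(-1)^{\frac{p-1}{2}\cdot\frac{q-1}{2}}$ for odd $q$. Because $p \bmod 4$ is already fixed, this reciprocity factor is a fixed constant for each $q$, so the condition $\psi_p(q) = \ep_q$ still cuts out exactly $(q-1)/2$ classes mod $q$; the only subtlety is at $q = 2$, where $\psi_p(2)$ depends on $p \bmod 8$ and interacts with $p \bmod 4$, but since we are free to choose both, exactly one class mod $8$ is admissible. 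Verifying that this gives precisely $2^{\pi(y)+1}$ as the denominator (rather than, say, being off by a factor of $2$) is the crucial check. Once the class count is confirmed, the rest is a routine application of Siegel--Walfisz, with the understanding that, as in Lemma~\ref{SignsLegendreLong}, the constant $c$ and the implied constants are ineffective.
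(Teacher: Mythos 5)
Your proof is correct and arrives at the right main term and error term, but it takes a genuinely different route from the paper. The paper works ``on the character side'': it writes the indicator of the set $\mathcal{P}(x,y,a,\{\ep_q\})$ (restricted to $p>\log x$, so that $p\nmid Q$) as
$$
\frac{1}{2^{\pi(y)+1}}\bigl(1+\chi_{-4}(a)\chi_{-4}(p)\bigr)\prod_{q\le y}\bigl(1+\ep_q\psi_p(q)\bigr),
$$
expands the product into a sum over squarefree divisors $\ell\mid Q$, and then observes via quadratic reciprocity that $\psi_p(\ell)=\bigl(\tfrac{\ell}{p}\bigr)$ and $\chi_{-4}(p)\bigl(\tfrac{\ell}{p}\bigr)$ are nonprincipal characters in $p$ of conductor at most $4Q\le(\log x)^{2}$ whenever $\ell\neq 1$ (resp.\ for all $\ell$), so Siegel--Walfisz kills those inner sums and only the $\ell=1$ term survives, giving $\pi(x)/2^{\pi(y)+1}$. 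You instead work ``on the progression side'': translate the conditions $\psi_p(q)=\ep_q$ into a union of residue classes modulo $Q=8\prod_{3\le q\le y}q$ via reciprocity, count the admissible classes (one class mod $8$ and $(q-1)/2$ classes for each odd $q\le y$, giving $\varphi(Q)/2^{\pi(y)+1}$ classes), and apply Siegel--Walfisz to each class of $\pi(x;Q,b)$. The two arguments are dual and of comparable length; the paper's sidesteps the bookkeeping of counting admissible classes (which you correctly flag as the delicate step, and which you carry out correctly), while yours avoids having to identify the Kronecker symbols $\bigl(\tfrac{\ell}{\cdot}\bigr)$ as primitive Dirichlet characters. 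One small thing worth stating explicitly in your version: one should discard the finitely many primes $p\le y$ (for which $\psi_p(q)=0$ for some $q\le y$, so they are not in $\mathcal{P}$); as $y\le\log\log x$ this costs only $O(\log\log x)$, absorbed by the error term.
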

\begin{proof}
Let $Q=\prod_{q\leq y} q$. Note that by the Prime Number Theorem and our assumption we have $Q=e^{y+o(y)}\leq  (\log x)^{1+o(1)}.$ We observe that for a prime $y< p\leq x$ we have
\begin{equation}\label{DetectingPrimesSigns}
\frac{1}{2^{\pi(y)+1}} (1+\chi_{-4}(a)\chi_{-4}(p))\prod_{q\leq y} \left(1+ \ep_q \psi_p(q)\right)=\begin{cases} 
1 & \text{ if } p\in \mathcal{P}(x, y, a, \{\ep_q\}), \\ 0 & \text{ otherwise,}\end{cases}
\end{equation}
where $\chi_{-4}$ is the non-principal character modulo $4$. We extend the sequence $\ep_q$ multiplicatively to all square-free numbers $\ell$ by letting $\ep_{\ell}=\prod_{q\mid \ell} \ep_q$. Therefore, expanding the product on the left hand side of \eqref{DetectingPrimesSigns} we deduce that
\begin{equation}\label{ExpandProductCharacters}
\begin{aligned}
|\mathcal{P}(x, y, a, \{\ep_q\})| &= \frac{1}{2^{\pi(y)+1}} \sum_{y< p\leq x} \left(1+\chi_{-4}(a)\chi_{-4}(p)\right)\prod_{q\leq y} \left(1+ \ep_q \psi_p(q)\right)+O(y)\\
&= \frac{1}{2^{\pi(y)+1}} \sum_{p\leq x} (1+\chi_{-4}(a)\chi_{-4}(p))\sum_{\ell \mid Q}  \ep_{\ell}\psi_p(\ell)+O(y)\\
& = \frac{1}{2^{\pi(y)+1}} \left(\sum_{\ell \mid Q} \ep_{\ell} \sum_{p\leq x} \left(\frac{\ell}{p}\right)+ \chi_{-4}(a) \sum_{\ell \mid Q} \ep_{\ell} \sum_{p\leq x} \chi_{-4}(p)\left(\frac{\ell}{p}\right)\right)+O(y).
\end{aligned}
\end{equation}
If $\ell\neq 1$, then the law of quadratic reciprocity implies that $\xi_1=\left(\frac{\ell}{\cdot}\right)$ is a non-principal character of conductor $\ell$ or $4\ell$. Similarly, for all $\ell $ the character $\xi_2= \chi_{-4}\xi_1$ is a non-principal character of conductor $4\ell$. Furthermore, note that $4\ell \leq 4 Q \leq (\log x)^2$ if $x$ is large enough. Thus, it follows from Corollary 11.18 of \cite{MVBook}  that for $j=1$ if $\ell\neq1$ and $\ell\mid Q$, and for $j=2$ for all $\ell\mid Q$ we have 
\begin{equation}\label{SiegelWalfisz}
\sum_{p\leq x} \xi_{j}(p) \ll x\exp\left(-c\sqrt{\log x}\right).
\end{equation}
Inserting these bounds in \eqref{ExpandProductCharacters} completes the proof.

\end{proof}

\begin{proof}[Proof of Theorem \ref{NegativityLegendreTheorem}] 
We start by proving \eqref{PreciseSmallLambda}. Let $Z=x^{21/40}$ and $\delta=1/100$. By \eqref{PolyaFourier2} we have 
$$ \sum_{n\leq \alpha p } \psi_p(n)= \frac{\sqrt{p}}{\pi}F(\alpha, p) +O\left(p^{1/2-\delta}\right),$$
for all primes $\sqrt{x}\le p\leq x$ and all $\alpha\in (0,1)$. Let $3\leq y\leq (\log_2 x)(\log_3 x)^2$ be a parameter to be chosen, and $\mathcal{E}_2(x)$ be the set of primes $\sqrt{x} \leq p\leq x$ such that $p\equiv 3\bmod 4$ and    
$$
\max_{\alpha\in [0, 1)}\Bigg|\sum_{\substack{1\leq n\leq Z\\P^{+}(n)> y}} \frac{\psi_p(n) (1-\cos(2\pi n\alpha))}{n}\Bigg|>  \frac{2e^{\gamma}}{\log y}.
$$
By Theorem \ref{KeyResultPrimes} we have 
\begin{equation}\label{SizeExpSetPrimes2}
|\mathcal{E}_2(x)| \ll \pi(x)\exp\left(-\frac{y}{3(\log y)^3}\right).
\end{equation}
We will use the same choice as Montgomery \cite{Mo} for the values of $\psi_p(q)$ for small $q$. Let $h(n)$ be the completely multiplicative function such that $h(q)=\chi_{-3}(q)$ for all primes $q\neq 3$ and $h(3)=-1$.  Montgomery showed in \cite{Mo} that for all $\alpha\in (0,1)$ such that $||\alpha||<1/3$ we have 
\begin{equation}\label{MontgomeryTrick}
U(\alpha):=\sum_{n=1}^{\infty} \frac{h(n)\cos(2\pi n\alpha)}{n}>\frac{\pi}{8\sqrt{3}}.
\end{equation}
Let $y_0:=y/(3\log y)^2$ and $3\leq H\leq y_0$ be a parameter to be chosen. Let $\mathcal{B}(y, H)$ be the set of primes $\sqrt{x}\leq p\leq x$ such that $p\equiv 3\bmod 4$ and
$$
\psi_p(q)=\begin{cases} h(q) & \text{ if } q\leq H,\\
-1 & \text{ if } H<q\leq y_0.
\end{cases}
$$ We define $ \mathcal{T}(y):= \mathcal{B}(y, H)\setminus \mathcal{E}_2(x).$ Then we have $|\mathcal{T}(y)|\gg \pi(x)/2^{\pi(y_0)}$ by Lemma \ref{SignsLegendre} and the estimate \eqref{SizeExpSetPrimes2}. 
We now let $p$ be a prime in $\mathcal{T}(y)$. By \eqref{PolyaUniformTail3} and our assumption on $p$ we get 
\begin{equation}\label{Falphap}
\begin{aligned}
F(\alpha, p)&= \sum_{\substack{1\leq n\leq Z\\P^{+}(n)\leq y}} \frac{\psi_p(n)(1-\cos(2\pi n\alpha))}{n} +O\left(\frac{1}{\log y}\right)\\
&= \sum_{\substack{n\geq 1\\P^{+}(n)\leq y }} \frac{\psi_p(n)(1-\cos(2\pi n\alpha))}{n} +O\left(\frac{1}{\log y}\right).
\end{aligned}
\end{equation}
Furthermore, we have
\begin{equation}\label{L1chi3}
\begin{aligned}
\sum_{\substack{n\geq 1\\P^{+}(n)\leq y}} \frac{\psi_p(n)}{n}&= \prod_{q\leq y} \left(1-\frac{\psi_p(q)}{q}\right)^{-1}\\
&= \frac{3}{4} \prod_{q\leq H}\left(1-\frac{\chi_{-3}(q)}{q}\right)^{-1} \prod_{H< q\leq y_0} \left(1+\frac{1}{q}\right)^{-1}\prod_{y_0<q\leq y} \left(1-\frac{\psi_p(q)}{q}\right)^{-1}.
\end{aligned}
\end{equation}
By the Prime Number Theorem in arithmetic progressions we have 
$$ \sum_{q\leq t} \chi_{-3}(q) \ll t \exp\left(-c\sqrt{\log t}\right), $$
for some constant $c>0$. Therefore, by partial summation we obtain
$$ \sum_{q>H} \frac{\chi_{-3}(q)}{q} \ll \exp\left(-\frac{c}{2}\sqrt{\log H}\right).$$ 
This implies that 
\begin{align*}
\prod_{q\leq H}\left(1-\frac{\chi_{-3}(q)}{q}\right)^{-1} &=  L(1, \chi_{-3}) + O\left(\exp\left(-\frac{c}{2}\sqrt{\log H}\right)\right)\\
&= \frac{\pi}{3\sqrt{3}}+O\left(\exp\left(-\frac{c}{2}\sqrt{\log H}\right)\right).
\end{align*}
Inserting this estimate in \eqref{L1chi3} and using Mertens' Theorem we deduce that 
$$
0< \sum_{\substack{n\geq 1\\P^{+}(n)\leq y}} \frac{\psi_p(n)}{n}\leq  \frac{\pi}{4\sqrt{3}}\frac{\log H\log y}{(\log y_0)^2} \left(1+O\left(\frac{1}{\log H}\right)\right).
$$
We now choose $H= \sqrt{y}/(C_1(\log y)^2)$ where $C_1$ is a suitably large constant. This gives
\begin{equation}\label{L1chi32}
\sum_{\substack{n\geq 1\\P^{+}(n)\leq y}} \frac{\psi_p(n)}{n}\leq  \frac{\pi}{8\sqrt{3}} -\frac{\log C_1}{\log y}.
\end{equation}
On the other hand, by Parseval's identity we have 
\begin{align*}
 \int_0^{1}\Bigg|\sum_{\substack{n\geq 1\\P^{+}(n)\leq y}} \frac{\psi_p(n) \cos(2\pi n\alpha)}{n} - U(\alpha)\Bigg|^2d\alpha & = \sum_{\substack{n\geq 1\\ P^+(n)\leq y}} \frac{(\psi_p(n)- h(n))^2}{n^2} + \sum_{\substack{n\geq 1\\ P^+(n)> y}} \frac{h(n)^2}{n^2}\\
 & \ll \sum_{\substack{n\geq 1\\P^{+}(n)>H}}\frac{1}{n^2} \ll \frac{\log y}{\sqrt{y}}, 
 \end{align*}
since 
$$ 
\sum_{\substack{n\geq 1\\P^{+}(n)>H}}\frac{1}{n^2}= \frac{\pi^2}{6}- 
\sum_{\substack{n\geq 1\\P^{+}(n)\leq H}}\frac{1}{n^2} = \frac{\pi^2}{6}\left(1-\prod_{q>H} \left(1-\frac{1}{q^2}\right)\right)\ll \sum_{q>H} \frac{1}{q^2} \ll \frac{1}{H\log H}.
$$
Let $\mathcal{S}_p$ be the set of $\alpha\in (0, 1)$ such that 
$$ \Bigg|\sum_{\substack{n\geq 1\\P^{+}(n)\leq y}} \frac{\psi_p(n) \cos(2\pi n\alpha)}{n} - U(\alpha)\Bigg| > \frac{1}{\log y}.$$
Then 
$$ \mu(\mathcal{S}_p)\leq (\log y)^2 \int_0^{1}\Bigg|\sum_{\substack{n\geq 1\\P^{+}(n)\leq y}} \frac{\psi_p(n) \cos(2\pi n\alpha)}{n} - U(\alpha)\Bigg|^2d\alpha \ll \frac{(\log y)^3}{\sqrt{y}}.
$$
Thus, recalling the definitions of the sets $\mathcal{T}(y)$ and $\mathcal{S}_p$ and using the estimate  \eqref{MontgomeryTrick}  we deduce that if $p\in \mathcal{T}(y)$ and $\alpha\in (0,1/3)\cup (2/3, 1)\setminus \mathcal{S}_p$ then 
$$ \sum_{\substack{n\geq 1\\P^{+}(n)\leq y}}\frac{\psi_p(n) \cos(2\pi n\alpha)}{n}> \frac{\pi}{8\sqrt{3}}-\frac{1}{\log y}.$$
Combining this estimate with \eqref{Falphap} and \eqref{L1chi32} gives
\begin{align*}
F(\alpha, p)=  \sum_{\substack{n\geq 1\\P^{+}(n)\leq y}} \frac{\psi_p(n)}{n}- \sum_{\substack{n\geq 1\\P^{+}(n)\leq y}} \frac{\psi_p(n)\cos(2\pi n\alpha)}{n}+O\left(\frac{1}{\log y}\right) \leq -\frac{C_2 }{\log y}, 
\end{align*}
for some positive constant $C_2$ if $C_1$ is suitably large. This in turn implies that 
$$ \sum_{n\leq \alpha p } \psi_p(n)<-\frac{\sqrt{p}}{\log_3 p}. $$
by \eqref{PolyaFourier2}, if $x$ is large enough. Hence for $p\in \mathcal{T}(y)$ we have 
$$ \lambda(p)\leq \frac{1}{3}+ \mu(\mathcal{S}_p)\leq \frac{1}{3}+ C_3\frac{(\log y)^3}{\sqrt{y}}, $$
for some absolute constant $C_3>0$. Choosing $y= C_4 T^2 (\log T)^6$ for some suitably large constant $C_4$ completes the proof of \eqref{PreciseSmallLambda}. 

To prove \eqref{SmallLambda} we follow the exact same lines, and replace Lemma \ref{SignsLegendre} by Lemma \ref{SignsLegendreLong}. In this case we make the following choices of the parameters: $y_0=y/(3\log y)^3\leq c_0\log x$ (where $c_0$ is the constant in Lemma \ref{SignsLegendreLong}), $H=\sqrt{y}/(C_5(\log y)^3)$, and $y= C_6 T^2 (\log T)^{8}$ for some suitably large constants $C_5$ and $C_6$. This completes the proof.

\end{proof}


\end{document}